\documentclass[12pt]{article}

\usepackage{amsmath} 
\usepackage{amsfonts} 
\usepackage{amssymb} 
\usepackage{amsthm}
\usepackage{graphicx}
\usepackage{latexsym}
\usepackage{amsmath,bm}
\usepackage{wrapfig}
\usepackage{enumerate}
\usepackage{mathtools}
\usepackage[nottoc,numbib]{tocbibind}
\usepackage{verbatim}
\usepackage[toc,page]{appendix}
\usepackage{hyperref}
\usepackage{titlefoot}

\newtheorem{theorem}{Theorem}
\theoremstyle{plain}

\newtheorem{claim}{Claim}

\newtheorem{definition}{Definition}

\newtheorem{lemma}{Lemma}

\newtheorem{proposition}{Proposition}
\newtheorem{corollary}{Corollary}
\newtheorem{remark}{Remark}

\numberwithin{equation}{section}
\numberwithin{theorem}{section}
\numberwithin{proposition}{section}
\numberwithin{lemma}{section}
\numberwithin{cor}{section}
\numberwithin{definition}{section}
\numberwithin{remark}{section}
\numberwithin{claim}{section}

\newcommand{\ds}{ \displaystyle }

\newcommand{\R}{\mathbb R}
\newcommand{\C}{\mathbb C}

\newcommand{\la}{\lambda}
\newcommand{\e}{\epsilon}

\begin{document}

\title
{Global, decaying solutions of a focusing energy-critical heat equation in $\mathbb{R}^4$}
\author{ Stephen Gustafson, Dimitrios Roxanas   \\ \\
Department of Mathematics\\
University of British Columbia\\
V6T 1Z2 Vancouver, Canada\\
gustaf@math.ubc.ca, droxanas@math.ubc.ca }
\date{}
\maketitle

\unmarkedfntext{\textbf{Keywords:} Nonlinear Heat Equation, Concentration Compactness, Regularity, Asymptotic Decay. \textbf{2010 AMS Mathematics Classification:} 35K05, 35B40, 35B65}

\setcounter{page}{1}

\begin{abstract} We study solutions of the focusing energy-critical nonlinear heat equation
$u_t = \Delta u - |u|^2u$ in $\mathbb{R}^4.$ We show that solutions emanating from initial data with 
energy and $\dot{H}^1-$norm below those of the stationary solution $W$ are global and decay to zero, via the 
``concentration-compactness plus rigidity'' strategy of Kenig-Merle \cite{KMa, KMb}.
First, global such solutions are shown to dissipate to zero, using a
refinement of the small data theory and the $L^2$-dissipation relation. 
Finite-time blow-up is then ruled out using the backwards-uniqueness
of Escauriaza-Seregin-Sverak \cite{ESS1,ESS2}
in an argument similar to that of Kenig-Koch \cite{KK} for the Navier-Stokes equations.
\end{abstract}

\tableofcontents

\section{Introduction}

We consider here the Cauchy problem for the 
focusing, energy-critical nonlinear heat equation in four space dimensions:
\begin{equation} \label{CP}
  \left\{ \begin{array}{l} 
  u_t = \Delta u + |u|^2u \\
  u(0,x) = u_0(x) \in \dot{H}^1(\R^4)
  \end{array} \right. 
\end{equation}
for $u(x,t) \in \C$ with initial data in the energy space 
\[
  \dot H^1(\R^4) = \{ u \in L^4(\R^4;\C) \; \big| \;  \| u \|_{\dot H^1}^2 = \int_{\R^4} |\nabla u(x)|^2 \; dx < \infty \}. 
\]
This is the $L^2$ gradient-flow equation for an {\it energy},  defined for $u \in \dot{H}^1$ as 
\[
  E(u) = \int_{\mathbb{R}^4} \left( \frac{1}{2} | \nabla u|^2 - \frac{1}{4} |u|^4 \right) dx,
\]  
and so in particular the energy is (formally) dissipated along solutions of~\eqref{CP}:
\begin{equation} \label{dissipation} 
  \frac{d}{dt} E(u(t)) = - \int_{\mathbb{R}^4} |u_t|^2 \; dx \leq 0.     
\end{equation}
We refer to the gradient term in $E$ as the {\it kinetic energy}, and the second term as the 
{\it potential energy}. The fact that the potential energy is negative expresses the focusing nature of the nonlinearity.  Problem~\eqref{CP} is {\it energy-critical} in the sense that the scaling 
\begin{equation}  \label{scaling}
  u_{\lambda}(t,x) = \lambda u(\lambda^2 t, \lambda x), \quad \la > 0
\end{equation}
leaves invariant the equation, the potential energy, and in particular the kinetic energy, which is the 
square of the {\it energy norm} $\| \cdot \|_{\dot H^1}$.\\

Static solutions of~\eqref{CP}, which play a key role here, solve the elliptic equation
\begin{equation} \label{static}
  \Delta W + |W|^2 W = 0.
\end{equation}
The function
\begin{equation*} 
  W = W(x)  = \frac{1}{(1+ \frac{|x|^2}{8})} \; \in \dot H^1(\R^4), \;\; \not\in L^2(\R^4)
\end{equation*} 
is a well-known solution. Its scalings by~\eqref{scaling}, and spatial translations of these
are again static solutions, and multiples of these
are well-known~\cite{A,T} to be the unique extremizers of the Sobolev inequality
\begin{equation}  \label{Sobolev} 
  \forall u \in  \dot{H}^1, \; \|u\|_{L^4} \leq C \|\nabla u \|_{L^2}, \quad 
  C = \frac{\|W\|_{L^4}}{\|\nabla W\|_{L^2}} = \|\nabla W\|_{L^2}^{-\frac{1}{2}}
  \mbox{ the best constant.}
\end{equation}

As for time-dependent solutions, a suitable local existence theory -- see Theorem~\ref{lwp} for details -- ensures the existence of a unique smooth solution $u \in C(I ; \dot H^1(\R^4))$ on a maximal time 
interval $I = [0,T_{max}(u_0))$. 
The main result of this paper states that
initial data lying ``below'' $W$ gives rise to global smooth solutions of~\eqref{CP}
which decay to zero:  
\begin{theorem} \label{Main Theorem} 
Let $u_0 \in \dot{H}^1(\R^4)$ satisfy
\begin{equation} \label{conditions}
  E(u_0) \leq E(W), \qquad \| \nabla u_0 \|_{L^2}  < \| \nabla W \|_{L^2}.
\end{equation}
Then the solution $u$ of~\eqref{CP} is global ($T_{max}(u_0) = \infty$) and satisfies
\begin{equation} \label{decays}
  \lim_{t \rightarrow \infty} \|u(t)\|_{\dot{H}^1} = 0. 
\end{equation}
\end{theorem}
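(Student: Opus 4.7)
The plan is to follow the Kenig-Merle concentration-compactness plus rigidity strategy outlined in the abstract. The first building block is \emph{variational trapping}: I would exploit the extremal characterization of $W$ for~\eqref{Sobolev} to show that the hypotheses~\eqref{conditions} propagate along the flow. Writing $E(u) \geq \frac{1}{2}\|\nabla u\|_{L^2}^2 - \frac{C^4}{4}\|\nabla u\|_{L^2}^4$ from Sobolev, the right-hand side as a function of $y = \|\nabla u\|_{L^2}$ peaks precisely at $y_* = \|\nabla W\|_{L^2}$ with value $E(W)$. Combined with~\eqref{dissipation} and the continuity of $t \mapsto \|\nabla u(t)\|_{L^2}$ on $[0, T_{\max})$, this yields a uniform strict bound $\sup_{t \in [0, T_{\max})} \|\nabla u(t)\|_{L^2}^2 \leq \eta \|\nabla W\|_{L^2}^2$ for some $\eta < 1$, together with coercivity $E(u(t)) \gtrsim \|\nabla u(t)\|_{L^2}^2$ uniformly in $t$.

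Next, \emph{decay of global solutions}. Assuming $T_{\max}(u_0) = \infty$, the energy is nonincreasing and bounded below, so it converges; the dissipation identity then forces $\int_0^\infty \|u_t\|_{L^2}^2\, dt < \infty$, hence $\|u_t(t_n)\|_{L^2} \to 0$ along some sequence $t_n \to \infty$. Any $\dot{H}^1$-weak subsequential limit $v$ of $u(t_n)$ satisfies $\Delta v + |v|^2 v = 0$ with $\|\nabla v\|_{L^2} < \|\nabla W\|_{L^2}$; by the Aubin-Talenti classification of ground states, $v \equiv 0$. To upgrade this subsequential decay to the full limit~\eqref{decays}, I would invoke a refinement of the small-data theory: once $\|u(t_0)\|_{\dot{H}^1}$ drops below a suitable small threshold, a small-data global existence and decay statement in appropriate scale-invariant parabolic norms takes over and forces $\|u(t)\|_{\dot{H}^1} \to 0$.

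Finally, to \emph{rule out finite-time blow-up}, I would argue by contradiction. If $T_{\max}(u_0) < \infty$ for some data satisfying~\eqref{conditions}, I would extract a critical element via a Bahouri-G\'erard-type profile decomposition in $\dot{H}^1$ adapted to the parabolic scaling~\eqref{scaling}, combined with the trapping from the first step and a perturbation lemma built from the small-data theory. Minimization should produce a blow-up solution $u_c$ still obeying~\eqref{conditions} whose trajectory $\{ \la(t)^{-1} u_c(t,\, x_0(t) + \la(t)^{-1}\,\cdot) \}$ is precompact in $\dot{H}^1$. Following Kenig-Koch's treatment of Navier-Stokes \cite{KK}, the compactness together with the coercive energy control should translate into quantitative pointwise spatial decay of $u_c$ away from the concentration point on a suitable time interval---precisely the hypothesis needed to invoke the backwards-uniqueness theorems of Escauriaza-Seregin-Sverak~\cite{ESS1,ESS2}. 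Backwards uniqueness then forces $u_c \equiv 0$, contradicting blow-up.

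The main obstacle, and the technical heart of the argument, is this final rigidity step. One must simultaneously set up a parabolic profile decomposition with sharp enough control to produce a critical element (where the strong smoothing and time-irreversibility of the heat flow both help and complicate matters), verify that the precompact trajectory has the Gaussian-type spatial decay required to apply backwards uniqueness, and rule out escape of the spatial center $x_0(t)$ as $t \to T_{\max}$. These are precisely the places where the adaptation of Kenig-Koch's Navier-Stokes argument to the focusing energy-critical heat equation will require the most care.
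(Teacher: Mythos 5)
Your overall architecture (variational trapping, decay of global below-threshold solutions, concentration--compactness plus rigidity via Escauriaza--Seregin--\v{S}ver\'ak) matches the paper's, but your second step contains a genuine gap. From energy dissipation you get $\int_0^\infty \|u_t\|_{L^2}^2\,dt < \infty$, hence $\|u_t(t_n)\|_{L^2}\to 0$ along a sequence, and any \emph{weak} $\dot H^1$-subsequential limit of $u(t_n)$ solves the static equation and must vanish. But weak convergence to zero does not imply that $\|u(t)\|_{\dot H^1}$ ever drops below the small-data threshold: the kinetic energy can concentrate or escape to spatial infinity while the weak limit is trivial, which is exactly the loss of compactness the whole scheme is built to confront. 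So the hand-off to the small-data theory ("once $\|u(t_0)\|_{\dot H^1}$ drops below a suitable threshold") is unjustified. The paper closes this by a different mechanism: it multiplies the equation by $u$ (not $u_t$) to get the $L^2$-dissipation identity $\frac{d}{dt}\tfrac12\|u\|_{L^2}^2 = -K(u)$, and the variational coercivity $K(u)\gtrsim \|\nabla u\|_{L^2}^2$ (valid below the $W$ threshold) then gives the \emph{strong} integrated bound $\int_0^\infty \|\nabla u(t)\|_{L^2}^2\,dt \lesssim \|u_0\|_{L^2}^2$, forcing $\|\nabla u(t_n)\|_{L^2}\to 0$ along a sequence. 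Since $u_0\in\dot H^1$ need not lie in $L^2$, an additional frequency-splitting $u_0 = w_0 + v_0$ with $w_0$ small in $\dot H^1$ and $v_0\in H^1$, plus an estimate on the perturbed equation for $v = u - w$, is needed; your proposal does not address this either. Note also that this decay theorem is not logically parallel to the blow-up exclusion: the paper uses it \emph{inside} the critical-element construction (for the subcritical profiles) and in the final corollary identifying $E_c = \|\nabla W\|_{L^2}^2$.

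Two smaller points on the rigidity step. Backwards uniqueness alone only yields $u_c\equiv 0$ on the \emph{exterior} of a ball on a short backward time interval; to conclude $u_c\equiv 0$ everywhere one also needs the unique continuation theorem (applied on large spatial cylinders centered in the exterior region) together with a local small-energy regularity lemma that makes $u_c$ smooth and bounded there and continuous up to $T^*$ --- your sketch omits unique continuation entirely. Also, the input to backwards uniqueness is not "Gaussian-type spatial decay" of $u_c$ but the vanishing $u_c(\cdot,T^*)\equiv 0$ outside a ball, which the paper derives from $\lambda(t)\to\infty$, the boundedness of $x(t)$ (proved via energy dissipation and the positivity of $\inf_t E(u_c(t))$, itself a nontrivial argument using the $L^2$-dissipation and Claim~\ref{R-claim}), and the compactness estimate~\eqref{out}. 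These are precisely the places you flag as delicate, so the outline is in the right spirit, but as written it would not compile into a proof.
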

The conditions~\eqref{conditions} define a non-empty set, since 
by the Sobolev inequality~\eqref{Sobolev} it includes all initial data of sufficiently small kinetic energy. 
Moreover, conditions~\eqref{conditions} are sharp for global existence and decay in several senses.
Firstly, if the kinetic energy inequality is replaced by equality, $W$ itself provides a non-decaying
(though still global) solution. 
Secondly, if the kinetic energy inequality is reversed, and under the additional assumption
$u_0 \in L^2(\R^4)$, by a slight variant of a classical argument~\cite{Lev} we find that   
the solution blows up in finite time:
\begin{theorem}  \label{blowup crit1} 
Let $u_0 \in H^1(\mathbb{R}^4)$ with
\[
  E(u_0) < E(W), \qquad \|\nabla u_0\|_{L^2} \geq \|\nabla W\|_{L^2}.
\]
Then the solution $u$ of~\eqref{CP} has finite maximal lifespan: $T_{max}(u_0) < \infty$.
\end{theorem}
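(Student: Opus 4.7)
The plan is Levine's classical concavity argument, adapted to the ``above $W$'' setting. I first establish a trapping estimate: $\|\nabla u(t)\|_{L^2}^2$ stays uniformly strictly above $\|\nabla W\|_{L^2}^2$ throughout the lifespan. Writing $y_W := \|\nabla W\|_{L^2}^2$, the Pohozaev-type identity obtained by testing~\eqref{static} against $W$ gives $\|W\|_{L^4}^4 = y_W$ and $E(W) = y_W/4$, and~\eqref{Sobolev} rewrites as $E(u) \geq h(\|\nabla u\|_{L^2}^2)$ with $h(y) := y/2 - y^2/(4y_W)$; this $h$ attains its max $E(W)$ at $y = y_W$ and is strictly decreasing on $[y_W,\infty)$. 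Combining $E(u(t)) \leq E(u_0) < E(W)$ (energy dissipation) with $\|\nabla u_0\|_{L^2}^2 \geq y_W$ and continuity, one concludes $\|\nabla u(t)\|_{L^2}^2 \geq y_1$ for all $t \in [0, T_{max})$, where $y_1 > y_W = 4E(W) > 4E(u_0)$ is the unique root of $h(y) = E(u_0)$ on $[y_W, \infty)$.

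Now assume for contradiction $T_{max} = \infty$. Since $u_0 \in H^1$, the local $H^1$-theory propagates the $L^2$-norm, so $u \in C([0,\infty); H^1(\R^4))$. Fix $T_0 > 0$ (to be taken large) and set
\[
  F(t) := \int_0^t \|u(s)\|_{L^2}^2\,ds + (T_0 - t)\|u_0\|_{L^2}^2, \qquad t \in [0, T_0].
\]
Direct computation gives $F(0) = T_0\|u_0\|_{L^2}^2 > 0$, $F'(0) = 0$, $F'(t) = \|u(t)\|_{L^2}^2 - \|u_0\|_{L^2}^2 = 2\int_0^t \mathrm{Re}\int_{\R^4} u_s \bar u\,dx\,ds$, and $F''(t) = 2\|\nabla u\|_{L^2}^2 - 8E(u(t))$. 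Cauchy--Schwarz in spacetime together with the dissipation identity $\int_0^t \|u_s\|_{L^2}^2\,ds = E(u_0) - E(u(t))$ yields $(F'(t))^2 \leq 4F(t)\bigl(E(u_0) - E(u(t))\bigr)$ for $t \in [0, T_0]$. Combining these,
\[
  F F'' - 2(F')^2 \geq 2F \bigl(\|\nabla u\|_{L^2}^2 - 4E(u_0)\bigr) \geq 2F(y_1 - 4E(u_0)) > 0
\]
by the trapping. Hence $(F^{-1})''(t) \leq 0$; the positive concave function $F^{-1}$ starts with zero slope, but since $F''$ is bounded below by a positive constant, $F^{-1}$ develops a strictly negative slope and, by concavity, its slope stays bounded away from $0$ thereafter. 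For $T_0$ sufficiently large this forces $F^{-1}$ to reach $0$ at some $T^* < T_0$, so that $F(t) \to \infty$ as $t \to T^{*-}$, contradicting the continuity and finiteness of $F$ on $[0, T_0]$ granted by global existence. Thus $T_{max} < \infty$.

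The key observation making the argument close cleanly is the choice of Levine concavity exponent $\alpha = 1$: in the general estimate $FF'' - (1+\alpha)(F')^2 \geq 4F\bigl[\tfrac{1}{4}\|u\|_{L^4}^4 + \alpha E(u) - (1+\alpha)E(u_0)\bigr]$, taking $\alpha = 1$ collapses the bracket to exactly $\tfrac{1}{2}\|\nabla u\|_{L^2}^2 - 2E(u_0)$, and the strict gap $y_1 > 4E(u_0)$ inherited from $E(u_0) < E(W)$ plugs in to give strict positivity for free. The mildly technical point, which is the main obstacle to watch out for, is the standard $T_0 \gg 1$ bookkeeping needed to guarantee that $F^{-1}$ actually vanishes within the interval $[0, T_0]$ where the Cauchy--Schwarz estimate $\int_0^t \|u\|_{L^2}^2\,ds \leq F(t)$ holds; this is a routine computation using $F \geq F(0) + \delta t^2$ with $\delta := y_1 - 4E(u_0)$.
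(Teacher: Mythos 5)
Your proof is correct, and it reaches the conclusion by a genuinely different implementation of the concavity method than the paper's. The variational trapping step is the same in substance: both arguments exploit the concave function $y \mapsto \tfrac12 y - \tfrac{1}{4\|\nabla W\|_{L^2}^2}y^2$ bounding $E(u)$ from below via the sharp Sobolev inequality, together with energy dissipation and continuity, to pin $\|\nabla u(t)\|_{L^2}^2$ on the branch above $\|\nabla W\|_{L^2}^2$ (the paper packages this as the decreasing inverse $e = f^{-1}$). Where you diverge is in the concavity functional. The paper keeps Levine's $I(t) = \int_0^t \|u\|_{L^2}^2\,ds + A$ with $A \gg 1$, must carry the Cauchy--Schwarz error term $(1+\tfrac1\epsilon)\|u_0\|_{L^2}^4$, optimize small $\alpha, \epsilon$ with $1+\delta \geq (1+\alpha)(1+\epsilon)$, and -- since $E(u_0)$ need not be negative -- introduce the auxiliary function $g(E) = \tfrac{2}{d-2}(e(E) - dE)$ with $g(E(u_0)) > 0$ and $g' < -2^*$ to replace the classical hypothesis $J(0) = -E(u_0) > 0$. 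You instead use the $(T_0 - t)\|u_0\|_{L^2}^2$ weight, which makes $F'(0) = 0$ and removes the Cauchy--Schwarz error term entirely, and you fix $\alpha = 1$, for which the bracket collapses algebraically to $2(\|\nabla u\|_{L^2}^2 - 4E(u_0))$; its strict positivity is then literally the trapping estimate combined with $E(W) = \tfrac14\|\nabla W\|_{L^2}^2$ in $d = 4$. This buys a shorter, more transparent argument with no auxiliary $g$, $\epsilon$, or large $A$; the price is the final $T_0 \gg 1$ bookkeeping, which you correctly flag and which does close: from $F'' \geq 2\delta$ one gets $F'(t_1) \geq 2\delta t_1$, and choosing $t_1 = \|u_0\|_{L^2}^2/\delta$ the concavity extrapolation $F'(t_1)(T_0 - t_1) < F(t_1)$ (forced by positivity of $1/F$ on $[0,T_0]$) fails for $T_0$ large, since $(T_0-t_1)\|u_0\|_{L^2}^2$ on the right is dominated by $2\delta t_1(T_0-t_1)$ on the left while $\int_0^{t_1}\|u\|_{L^2}^2$ is a fixed constant. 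One cosmetic remark: the lower bound you cite at the end, $F \geq F(0) + \delta t^2$, is not the estimate actually needed to close the argument -- it is the lower bound $F' \geq 2\delta t$ together with the explicit form of $F(t_1)$ that does the work -- but this does not affect correctness.
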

Thirdly, for any $a^* > 0$, \cite{Sc} constructed finite-time blow-up solutions with initial data 
$u_0 \in H^1(\mathbb{R}^4)$ satisfying $E(W) < E(u_0) < E(W) + a^*$.
See also \cite{FHV} for formal constructions of blow-up solutions close to $W$.

It follows from classical variational bounds -- see Lemma~\ref{VarLemma} -- 
and energy dissipation~\eqref{dissipation}, that any solution
$u$ on a time interval $I = [0,T)$ whose initial data satisfies~\eqref{conditions}, necessarily satisfies
\begin{equation}   \label{result finite}
  \sup_{t \in I} \|\nabla u \|_{L^2} < \|\nabla W\|_{L^2}.
\end{equation} 
So it will suffice to show that the conclusions of Theorem~\ref{Main Theorem} hold
for any solution satisfying~\eqref{result finite}.
Indeed, we will prove:
\begin{enumerate}
\item 
If $I = [0,\infty)$ and~\eqref{result finite} holds, then $\ds \lim_{t \to \infty} \| \nabla u(t) \|_{L^2} = 0$.
This is given as Theorem~\ref{Decay Theorem}.
\item 
For any solution satisfying~\eqref{result finite}, $T_{max}(u(0)) = \infty$.
This is given as Corollary~\ref{Global Theorem}.
\end{enumerate}

That static solutions provide the natural threshold for global existence and decay, 
as in~\eqref{result finite}, is a classical phenomenon (eg. \cite{St}) for critical equations, particularly well-studied in the setting of
parabolic problems, mostly on compact domains, (e.g., \cite{Du,Gibook,LW,Ta}) via ``blow-up''-type  arguments:
first, failure of a solution to extend smoothly is shown, by a local regularity estimate, to imply (kinetic) energy concentration; then, near a point of concentration, 
rescaled subsequences are shown to converge locally to a non-trivial static solution; 
finally, elliptic/variational considerations prohibit non-trivial static solutions below the threshold. 

The main purpose of our work is twofold: first, to establish the global-regularity-below-threshold 
result Theorem~\ref{Main Theorem} on the full space $\R^4$; second, to do so not by way of the classical strategy sketched above, but instead via Kenig-Merle's \cite{KMa,KMb} 
``concentration-compactness plus rigidity" approach to critical {\it dispersive} equations, similar to Kenig-Koch's \cite{KK} implementation for the Navier-Stokes equations.

The argument is structured as follows.
First, in Section~\ref{no infinite}, we prove the energy-norm decay of {\it global} solutions which satisfy~\eqref{result finite}, Theorem~\ref{Decay Theorem}.
The strategy is that employed for the Navier-Stokes equations in \cite{GIP}:
reduce the problem to establishing the decay of {\it small} solutions (which is 
a refinement of the local theory) by exploiting the $L^2-$dissipation relation,
using a solution-splitting argument to overcome the fact that the solution
fails to lie in $L^2$. 

Second, in Section~\ref{Minimal Section}, we prove the existence and compactness 
(modulo symmetries) of a ``critical'' element -- a counterexample to global existence and decay,
which is minimal with respect to $\ds \sup_t \| \nabla u(t) \|_{L^2}$, 
following closely the work \cite{KV0}. See Theorem~\ref{critical thm}.
The technical tools are a profile decomposition compatible
with the heat equation (described in Section~\ref{profile}) and a perturbation result for the linear 
heat equation, based on the local theory (Proposition~\ref{perturbation}).

Finally, in Section~\ref{Rigidity Section}, we exclude the possibility of a compact  
solution with finite maximal existence time in Theorem~\ref{Rigidity Theorem}.
In fact this is a much stronger conclusion than required for the proof 
of Theorem~\ref{Main Theorem}, since it excludes {\it compact} finite-time blowup
at {\it any} kinetic energy level -- that is, it does not use~\eqref{result finite}.
This part is based on classical parabolic tools. We first show that the centre of compactness
remains bounded, by exploiting energy dissipation. Then a local small-energy regularity 
criterion, together with backwards uniqueness and unique continuation theorems
of~\cite{ESS1,ESS2}, as in \cite{KK}, imply the triviality of the critical element.

There is a vast literature on the semilinear heat equation $u_t = \Delta u + |u|^{p-1} u$. 
We content ourselves here with a brief review focused on the case of domain $\mathbb{R}^d$, and refer the reader to the recent book \cite{QS} for a more comprehensive review of the literature. 
For treatments of the Cauchy problem in $L^p$ and Sobolev spaces under various assumptions on the nonlinearity and the initial data, see \cite{W,W1,BC}. 

Much of the work concerns (energy) subcritical ($p < \frac{d+2}{d-2}$) problems. 
The seminal papers \cite{GK1,GK2,GK3} introduced the study of heat equations through similarity variables and characterized
blow-up solutions. In continuation of these works, \cite{M} gave a first construction of a solution with arbitrarily given blow-up points, and see \cite{MZ} (and references therein) for estimates of the blow-up rate, descriptions of the blow-up set, and stability results for the blow-up profile. We remark that blow-up in the subcritical case for $L^{\infty}-$solutions is known to be of Type I, in the sense that 
$\ds \limsup_{t \rightarrow T_{\text{max}}} (T_{\text{max}}-t)^{\frac{1}{p-1}} \|u(\cdot,t)\|_{L^\infty} < +\infty$, 
and Type I blow-up solutions are known to behave like self-similar solutions near the blow-up point. 
For a different set of criteria for global existence/blow-up in terms of the initial data we refer the reader to \cite{CDW}.
For results on the relation between the regularity of the nonlinear term and the regularity of the corresponding solutions, see \cite{CDW1}.

For supercritical problems, \cite{MM1,MM2,MM3} show that there is no Type II blow-up for 
$3 \leq d \leq 10,$ while for $d \geq 11$ it is possible if $p$ is large enough.  
It is also shown that a Type I blow-up solution behaves like a self-similar solution, while a Type II converges (in some sense) to a stationary solution. We also refer to the recent results \cite{C} ($d\geq 11,$ bounded domain), and \cite{CRS} and to the preprint \cite{BS} for results in Morrey spaces.

For the critical case, we have already mentioned the finite-time blow-up constructions \cite{FHV,Sc},
and we point to recent constructions of infinite-time blowup (bubbling) on bounded domains 
($d\geq 5$)~\cite{CDM}, and on $\R^3$~\cite{MW}. 
The work \cite{GV} deals with the continuation problem for reaction-diffusion equations. 
We finally mention the recent result~\cite{CMR}, where a complete classification of solutions 
sufficiently close to the stationary solution $W$ is provided for $d \geq 7$:
such solutions either exhibit Type-I blow-up; dissipate to zero; or converge to (a slightly rescaled, translated) $W$. In particular, Type II blow-up is ruled out in $d\geq 7$ near $W.$ We also refer to our work \cite{GR0} for a critical case of the $m$-corotational Harmonic Map Heat Flow.

\begin{remark}
We expect Theorem~\ref{Main Theorem} to extend to the energy critical 
problem for the nonlinear heat equation in general dimension $d \geq 3$:
\begin{equation} 
\begin{split}
  &u_t = \Delta u + |u|^{\frac{4}{d-2}}u \label{CP_d} \\
  &u(t_0,x) = u_0(x) \in \dot{H}^1(\mathbb{R}^d).
\end{split}
\end{equation}
For simplicity of presentation, we will give the proof only for the case $d=4$. 
As will be apparent from the proof, the result can be easily transferred to solutions of~\eqref{CP_d} for $d=3$. The proof should also carry over to $d \geq 5$ 
with some extra work to estimate the low-power nonlinearity as in~\cite{TV}.
 \end{remark}

\begin{remark}
Our proof makes no use of any parabolic comparison principles, and so applies to 
complex-valued solutions. That said, for ease of writing some estimates we 
will sometimes replace the nonlinearity $|u|^2 u$ with $u^3$ though the estimates
remain true in the $\C$-valued case.
\end{remark}

\section{Some analytical ingredients}

\subsection{Local theory}

We first make precise what we mean by a {\it solution} in the energy space:  
\begin{definition}
A function $ u: I \times \mathbb{R}^4 \to \C$ on a time interval $I = [0,T)$
($0 < T \leq \infty$) is a solution of (\ref{CP})
if $u \in (C_t \dot{H}^1_x \cap L^{6}_{t,x})([0,t] \times \mathbb{R}^4)$;
$\nabla u \in L^3_{x,t} ([0,t] \times \mathbb{R}^4)$;
$D^2 u, \; u_t \in L^2_t L^2_x([0,t] \times \R^4)$ for all $t \in I$; 
and the Duhamel formula
\begin{equation}  \label{duhamel}
  u(t) =  e^{t \Delta} u_0 + \int_{0}^t e^{(t-s)\Delta} F(u(s)) ds ,
\end{equation}
is satisfied for all $t \in I$, where $F(u) = |u|^{2}u.$ 
We refer to the interval I as the lifespan of u. We say that u is a maximal-lifespan solution if the solution cannot be extended to any strictly larger interval. We say that u is a global solution if $I = \mathbb{R}^{+}:=[0,+\infty).$
\end{definition}
We will often measure the space-time size of solutions on a time interval $I$ in 
$L^6_{x,t}$,  denoting
\[
  S_I (u) := \int_I \int_{\mathbb{R}^4} |u(t,x)|^{6} dx dt, \quad
  \|u\|_{S(I)} := S_I(u)^{\frac{1}{6}} = \left(\int_I \int_{\mathbb{R}^4} |u(t,x)|^6 dx dt \right)^{\frac{1}{6}}.
\]

A local wellposedness theory in the energy space $\dot H^1(\R^4)$, 
analogous to that for the corresponding critical 
nonlinear Schr\"odinger equation (see e.g. \cite{CW}), is easily constructed, based on the Sobolev inequality and space-time estimates  for the heat equation on $\R^4$ (\cite{Gi}),
\begin{equation}  \label{STdecayheat}
\begin{split} 
  & \| e^{t \Delta} \phi \|_{L^p_x(\mathbb{R}^4)} \lesssim t^{-2(1/a - 1/p)} \| \phi \|_{L^a}, 
  \qquad 1 \leq a \leq p \leq \infty  \\
  & \| e^{t \Delta} \phi\|_{L^q_t L^p_x(\mathbb{R}^+ \times \mathbb{R}^4)} 
  \lesssim \| \phi \|_{L^a}, \qquad  \frac{1}{q} + \frac{2}{p} = \frac{2}{a}, \quad 1 < a \leq q  \\ 
  & \| \int_0^t e^{(t-s) \Delta} f(s) ds \|_{L^q_t L^p_x(\mathbb{R}^+ \times \mathbb{R}^4)} \lesssim  
  \| f \|_{L^{\tilde{q}'}_t L^{\tilde{p}'}_x(\mathbb{R}^+ \times \mathbb{R}^4)},   \\
  &\qquad \qquad \frac{1}{q} + \frac{2}{p} =   \frac{1}{\tilde{q}} + \frac{2}{\tilde{p}} = 1,
  \qquad \frac{1}{q} + \frac{1}{q'} = \frac{1}{p} + \frac{1}{p'} = 1,\end{split}
\end{equation}$ \hspace{0.3em} \text{and} \hspace{0.3em} (\tilde{q}',\tilde{p}') \hspace{0.3em} \text{the dual to any admissible pair} \hspace{0.3em} (\tilde{q},\tilde{p}).$

We also refer the reader to \cite{BC0, W} for a treatment of the Cauchy problem in the critical 
Lebesque space $L^{\frac{2d}{d-2}}$; the arguments directly adapt to show wellposedness in $\dot{H}^1$. 
One can use a fixed-point argument to construct local-in-time solutions for arbitrary initial data in 
$\dot{H}^1(\mathbb{R}^4)$; however, as usual when working in critical scaling spaces, 
the time of existence depends on the profile of the initial data, not merely on its $\dot{H}^1$-norm. 
We summarize:
\begin{theorem} \label{lwp} (Local well-posedness) 
Assume $u_0 \in \dot{H}^1(\mathbb{R}^4).$  
\begin{enumerate} 
\item (Local existence) 
There exists a unique, maximal-lifespan solution to the Cauchy Problem~\eqref{CP} in 
$I \times \mathbb{R}^4, I = [0, T_{max}(u_0))$
\item(Continuous dependence) 
The solution depends continuously on the initial data (in both the $\dot{H}^1$ and the $S_I$-induced topologies). 
Furthermore, $T_{\text{max}}$ is a lower-semicontinuous function of the initial data.
\item(Blow-up criterion) 
If \hspace{0.1em} $T_{\text{max}}(u_0) < + \infty,$ then $ \| u \|_{S([0,T_{\text{max}}(u_0)))} = + \infty$ 
\item(Energy dissipation) the energy E(u(t)) is a non-increasing function in time. 
More precisely,
for $0 < t < T_{max}$,
\begin{equation}   \label{energy dissipation}
  E(u(t)) + \int_0^t \int_{\R^4} |u_t|^2 \; dx \; dt = E(u_0).
\end{equation}
\item(Small data global existence)
There is $\epsilon_0>0$ such that if $\|e^{t\Delta} u_0\|_{S(\R^+)} \leq \epsilon_0$,
the solution $u$ is global, $T_{max}(u_0) = \infty$, and moreover 
\begin{equation}  \label{small data}
  \| u \|_{S(\R^+)} + \| \nabla u \|_{(L^\infty_t L^2_x \cap L^3_{x,t})(\R^+\times \R^4)}
 + \| D^2 u \|_{L^2_{x,t}(\R^+\times \R^4)} \lesssim \e_0.
\end{equation} 
This occurs in particular when $ \|u_0\|_{\dot{H}^1(\mathbb{R}^4)}$ is sufficiently small.  
\end{enumerate}
\end{theorem}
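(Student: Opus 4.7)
The plan is to obtain all five parts through a Duhamel fixed-point argument on the resolution space
\[
  X_T := \bigl\{ u \ \big|\ \|u\|_{L^6_{t,x}} + \|\nabla u\|_{L^\infty_t L^2_x \cap L^3_{x,t}} + \|D^2 u\|_{L^2_{x,t}} < \infty \bigr\},
\]
whose norms are dictated by the heat Strichartz estimates~\eqref{STdecayheat} and the Sobolev embedding $\dot H^1(\R^4)\hookrightarrow L^4(\R^4)$. On the linear side one obtains $\|e^{t\Delta}u_0\|_{X_\infty}\lesssim \|\nabla u_0\|_{L^2}$: the $L^6_{t,x}$ bound from the Strichartz estimate at the $L^4$ level applied to $u_0$, the $\nabla u$ bounds from the $L^2$-admissible pairs $(\infty,2)$ and $(3,3)$ applied to $\nabla u_0$, and the $D^2$ bound from a direct parabolic-smoothing computation. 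For the Duhamel integral the key nonlinear estimates are $\|F(u)\|_{L^2_{x,t}}\lesssim \|u\|_{L^6_{t,x}}^3$ (fed into parabolic regularity for the $D^2$ component), $\|\nabla F(u)\|_{L^{3/2}_{x,t}}\lesssim \|u\|^2_{L^6_{t,x}}\|\nabla u\|_{L^3_{x,t}}$ (dual to the $(3,3)$ pair), and $\|\nabla F(u)\|_{L^1_t L^2_x}\lesssim \|\nabla u\|^3_{L^3_{x,t}}$ (using the Sobolev embedding $\dot W^{1,3}(\R^4)\hookrightarrow L^{12}(\R^4)$). Cubic difference estimates of the same shape close a contraction on a small ball of $X_T$.

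Parts (1) and (5) then come out simultaneously. Local existence (1) follows once $T$ is chosen so that $\|e^{t\Delta}u_0\|_{L^6_{t,x}([0,T])}$ is below a universal threshold; since this quantity is globally finite on $\R^+$ by Strichartz, dominated convergence in $T$ allows this for any $u_0\in \dot H^1$. For small data (5), under the hypothesis $\|e^{t\Delta}u_0\|_{S(\R^+)}\leq \epsilon_0$, I would run the contraction on $[0,\infty)$, first closing $\|u\|_S\lesssim \epsilon_0$ by applying Hardy--Littlewood--Sobolev to the heat-kernel bound $\|e^{(t-s)\Delta}F(u)\|_{L^6_x}\lesssim (t-s)^{-2/3}\|u(s)\|_{L^6_x}^3$, and then propagating the remaining $\dot H^1$-level norms of~\eqref{small data} from $\|u_0\|_{\dot H^1}$ via the Strichartz estimates, absorbing the nonlinear contribution using the now-small $\|u\|_S$. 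The blow-up criterion (3) is standard: if $T_{\max}<\infty$ but $\|u\|_{S([0,T_{\max}))}<\infty$, partition $[0,T_{\max})$ into finitely many sub-intervals on which $\|u\|_{L^6_{t,x}}$ is below the smallness threshold, propagate $\dot H^1$ control across them, and invoke (5) near $T_{\max}$ to extend past it, a contradiction. Continuous dependence and lower semicontinuity of $T_{\max}$ in (2) follow by repeating the contraction estimates on the difference of two solutions with nearby data, on any compact sub-interval of the common lifespan.

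For the energy identity (4), the regularity of a strong solution --- in particular $u_t,\,D^2u\in L^2_{x,t}$, $u\in L^6_{t,x}$, $\nabla u\in L^\infty_t L^2_x$ --- is exactly what is needed to pair the equation rigorously with $\bar u_t$: each of $\nabla u\cdot\nabla\bar u_t$, $|u|^2 u\,\bar u_t$, and $|u_t|^2$ is locally integrable in space-time, integration by parts in space is justified at a.e.\ time, and one recovers $\frac{d}{dt}E(u(t))=-\|u_t(t)\|_{L^2}^2$, whose time integral is~\eqref{energy dissipation}. The main obstacle I anticipate is getting the nonlinear estimates to \emph{close} at the $\dot H^1$-critical scaling without gaining any positive power of $\|u_0\|_{\dot H^1}$ that would spoil the contraction --- this is exactly why the collection of norms in $X_T$ is set up so that Leibniz, H\"older, and Sobolev interlock precisely with the heat-Strichartz dual pairs. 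A secondary subtlety, typical of critical wellposedness, is that the time of existence in (1) depends on the \emph{profile} of $u_0$ through $\|e^{t\Delta}u_0\|_{L^6_{t,x}([0,T])}$ rather than merely on the number $\|u_0\|_{\dot H^1}$, and this profile-dependence must be respected in the continuous-dependence statement.
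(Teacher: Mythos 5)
Your proposal is correct and follows essentially the same route the paper intends: the paper does not write out a proof of Theorem~\ref{lwp}, but instead cites the standard critical fixed-point construction based on the heat space-time estimates~\eqref{STdecayheat} and Sobolev embedding (as in \cite{CW,BC0,W,Gi}), which is exactly what you carry out, with all exponents ($L^6_{t,x}$ from the $L^4$-level estimate, the dual pair $(3/2,3/2)$ for $\nabla F(u)$, HLS with $(t-s)^{-2/3}$, and $\dot W^{1,3}\hookrightarrow L^{12}$) correctly matched. The profile-dependence of the local existence time via $\|e^{t\Delta}u_0\|_{S([0,T])}$ is also precisely the caveat the paper records.
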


An extension of the proof of the local existence theorem implies the following stability result (see, e.g., \cite{KV}):
\begin{proposition} \label{perturbation} (Perturbation result)\\
  For every $E, L > 0$ and $\epsilon > 0$ there exists $\delta > 0 $ with the following property: assume $\tilde{u}: I \times \mathbb{R}^4 \to \mathbb{R},$ $I = [0,T),$ is an approximate solution to (\ref{CP}) in the sense that 
\begin{equation*}  \label{error} 
  \| \nabla e \|_{L^{\frac{3}{2}}_{t,x} (I \times \mathbb{R}^4)} \leq \delta, \quad
  e:= \tilde{u}_t - \Delta \tilde{u}- |\tilde{u}|^2\tilde{u},
\end{equation*}
and also
\begin{equation*}
\|\tilde{u}\|_{L^{\infty}_t \dot{H}^1_x(I \times \mathbb{R}^4)} \leq E \hspace{1em} \text{and} \hspace{1em} \| \tilde{u} \|_{S(I)} \leq L,
\end{equation*}
then if $u_0 \in \dot{H}^1_x (\mathbb{R}^4)$ is such that
\begin{equation*} 
  \|u_0 - \tilde{u}(0) \|_{\dot{H}^1_x(\mathbb{R}^4)} \leq \delta, \end{equation*}
there exists a solution $ u: I \times \mathbb{R}^4 \to \mathbb{R}$ of~\eqref{CP} with $u(0) = u_0$, and such that
\begin{equation*} 
  \|u - \tilde{u} \|_{L^{\infty}_t \dot{H}^1_x(I \times \mathbb{R}^4)} +\| u-\tilde{u} \|_{S(I)} \leq \epsilon .\end{equation*}
\end{proposition}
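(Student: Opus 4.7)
Set $w = u - \tilde u$, so that, formally,
\begin{equation*}
w_t - \Delta w = [F(\tilde u + w) - F(\tilde u)] - e, \qquad w(0) = u_0 - \tilde u(0), \qquad F(z) = |z|^2 z,
\end{equation*}
equivalently, in Duhamel form,
\begin{equation*}
w(t) = e^{t\Delta}(u_0 - \tilde u(0)) + \int_0^t e^{(t-s)\Delta}\bigl\{[F(\tilde u + w) - F(\tilde u)](s) - e(s)\bigr\}\,ds.
\end{equation*}
The plan is to bootstrap this identity in the norms $\|\nabla w\|_{L^\infty_t L^2_x \cap L^3_{x,t}}$ and $\|w\|_{S(\cdot)}$, using the heat Strichartz estimate~\eqref{STdecayheat} at the admissible pair $(q,p)=(3,3)$ (with dual $(3/2,3/2)$) and the interpolation inequality
\begin{equation*}
\|v\|_{L^6_{x,t}(J)} \lesssim \|\nabla v\|_{L^\infty_t L^2_x(J)}^{1/2}\|\nabla v\|_{L^3_{x,t}(J)}^{1/2},
\end{equation*}
a consequence of the Sobolev embeddings $\dot{H}^1(\R^4) \hookrightarrow L^4$ and $\dot{W}^{1,3}(\R^4) \hookrightarrow L^{12}$ together with interpolation between the $(q,p)=(\infty,4)$ and $(3,12)$ endpoints.

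The first step is to tame $\tilde u$. Pick $\eta > 0$ small (to be fixed in terms of the structural constants in the Strichartz and Sobolev inequalities) and partition $I$ into consecutive subintervals $I_j = [t_{j-1}, t_j)$, $j = 1, \ldots, N = N(L,\eta)$, on which $\|\tilde u\|_{S(I_j)} \leq \eta$; this is possible because $\|\tilde u\|_{S(I)} \leq L$. Applying Strichartz to the Duhamel form of $\tilde u$ (which produces $-e$) on $I_j$ and using
\begin{equation*}
\|\nabla F(\tilde u)\|_{L^{3/2}_{x,t}(I_j)} \lesssim \|\tilde u\|_{S(I_j)}^2 \|\nabla \tilde u\|_{L^3_{x,t}(I_j)}
\end{equation*}
(H\"older, with $2/3 = 1/6 + 1/6 + 1/3$), the nonlinear term is absorbable for $\eta$ small, yielding inductively an a priori bound
\begin{equation*}
\|\nabla \tilde u\|_{(L^\infty_t L^2_x \cap L^3_{x,t})(I_j)} \leq M_j = M_j(E, \eta, \delta).
\end{equation*}

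For the iteration on $w$, set $\Lambda_j := \|\nabla w\|_{(L^\infty_t L^2_x \cap L^3_{x,t})(I_j)}$ and note the pointwise bound
\begin{equation*}
\bigl|\nabla[F(\tilde u + w) - F(\tilde u)]\bigr| \lesssim (|\tilde u| + |w|)^2|\nabla w| + (|\tilde u| + |w|)|w||\nabla \tilde u|.
\end{equation*}
H\"older with the same exponent identity then gives
\begin{equation*}
\|\nabla[F(\tilde u + w) - F(\tilde u)]\|_{L^{3/2}_{x,t}(I_j)} \lesssim (\eta + \|w\|_{S(I_j)})^2 \Lambda_j + (\eta + \|w\|_{S(I_j)})\|w\|_{S(I_j)} M_j.
\end{equation*}
Substituting into the Strichartz/Duhamel estimate for $\nabla w$ on $I_j$, combined with $\|\nabla e\|_{L^{3/2}_{x,t}(I_j)} \leq \delta$ and $\|w\|_{S(I_j)} \lesssim \Lambda_j$, produces a recursion of the form $\Lambda_j \leq C_j(\Lambda_{j-1} + \delta)$ provided $\eta + \Lambda_j$ is small. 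Chaining from $\Lambda_0 \leq \|u_0 - \tilde u(0)\|_{\dot{H}^1} \leq \delta$ yields $\Lambda_j + \|w\|_{S(I_j)} \leq \tilde C_j\,\delta$ for every $j \leq N$. Existence of the true solution $u$ then follows by invoking Theorem~\ref{lwp} on a small initial interval $[0, t_0)$ and extending via the blow-up criterion, using the just-derived a priori bounds to rule out $T_{\max}(u_0) \leq \sup I$.

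The principal obstacle is orchestrating the constants in the right order: $\eta$ must be fixed first (small in terms of the structural Strichartz/Sobolev constants), which determines $N$ and, in turn, the geometrically growing constants $M_j$, $C_j$, $\tilde C_j$; only then can $\delta = \delta(\e, E, L)$ be chosen small enough to make $\tilde C_N\,\delta < \e$ and to preserve the smallness hypothesis $\eta + \Lambda_j \ll 1$ throughout all $N$ steps of the recursion. Aside from this bookkeeping, the argument is essentially the standard NLS-style stability result, transferred to the heat setting via~\eqref{STdecayheat}.
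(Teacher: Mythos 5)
The paper itself does not prove this proposition; it simply cites \cite{KV}, so your sketch is being compared against the standard stability argument rather than an in-text proof. Your outline is essentially that standard argument transferred to the heat setting, and most of the ingredients (the Duhamel formula for $w=u-\tilde u$, the interpolation $\|v\|_{L^6_{x,t}}\lesssim\|\nabla v\|_{L^\infty_tL^2_x}^{1/2}\|\nabla v\|_{L^3_{x,t}}^{1/2}$, the pointwise bound on $\nabla[F(\tilde u+w)-F(\tilde u)]$, and the H\"older exponents $2/3=1/6+1/6+1/3$) are correct.

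There is, however, one concrete gap in the bootstrap. Your H\"older estimate for the cross term gives a contribution
\begin{equation*}
\|(|\tilde u|+|w|)\,|w|\,|\nabla\tilde u|\|_{L^{3/2}_{x,t}(I_j)}\;\lesssim\;(\eta+\|w\|_{S(I_j)})\,\|w\|_{S(I_j)}\,\|\nabla\tilde u\|_{L^3_{x,t}(I_j)},
\end{equation*}
and since $\|w\|_{S(I_j)}\lesssim\Lambda_j$, the right-hand side contains the term $\eta\,M_j\,\Lambda_j$, which is \emph{linear} in the unknown $\Lambda_j$ with coefficient $\eta M_j$. To pass from the Strichartz/Duhamel inequality to the recursion $\Lambda_j\le C_j(\Lambda_{j-1}+\delta)$ you must absorb this term into the left-hand side, which requires $\eta M_j\ll 1$. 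But $M_j$ bounds the full $(L^\infty_tL^2_x\cap L^3_{x,t})$ norm of $\nabla\tilde u$ on $I_j$, hence $M_j\gtrsim \|\nabla\tilde u(t_{j-1})\|_{L^2}$, which can be of size $E$; since you fix $\eta$ in terms of structural constants only (and you must, because $N=N(L,\eta)$), the product $\eta M_j$ is not small and the recursion does not close as stated. The fix is standard: your first step already shows $\|\nabla\tilde u\|_{L^3_{x,t}(I)}\le C(E,L,\eta,\delta)<\infty$, so you should \emph{re-subdivide} $I$ into $N'=N'(E,L,\eta)$ intervals on which $\|\tilde u\|_{S(I_j)}+\|\nabla\tilde u\|_{L^3_{x,t}(I_j)}\le\eta$, and then use only the $L^3_{x,t}$ norm of $\nabla\tilde u$ (not $M_j$) in the cross-term estimate; the offending term becomes $(\eta+\Lambda_j)\Lambda_j\,\eta$ and is absorbable once $\eta$ and $\Lambda_j$ are small. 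With that modification, and a continuity argument to justify the a priori bounds on the actual solution $u$ together with the blow-up criterion of Theorem~\ref{lwp}, the proof closes. (Minor sign quibble: with $e:=\tilde u_t-\Delta\tilde u-F(\tilde u)$, the Duhamel formula for $\tilde u$ carries $+e$ and that for $w$ carries $-e$; this is immaterial since only $\|\nabla e\|_{L^{3/2}_{x,t}}$ enters.)
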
 

\subsection{Profile decomposition}  \label{profile}

The following proposition is the main tool (along with the Perturbation Proposition~\ref{perturbation}) used to establish the existence of a critical element.
The idea is to characterize the loss of compactness in some critical embedding;
it can be traced back to ideas in \cite{L}, \cite{BC}, \cite{St1}, \cite{SU} and their modern ``evolution'' counterparts  \cite{BG}, \cite{KMa} and \cite{KMb}.
 
\begin{proposition} \label{PD} (Profile Decomposition)
Let $\{u_n\}_n$ be a bounded sequence of functions in $\dot{H}^1 (\mathbb{R}^4)$. 
Then, after possibly passing to a subsequence (in which case, we rename it $u_n$), there exists a family of functions 
$ \{ \phi^j \}^{\infty}_{j=1} \subset \dot{H}^1,$ scales $\lambda^j_n > 0$ and centers $x^j_n \in \mathbb{R}^4 $ such that: 
\[
  u_n (x) = \sum_{j=1}^J \frac{1}{\lambda^j_n} \phi^j( \frac{x-x^j_n}{\lambda^j_n}) +  w^J_n(x), 
\]
$w^J_n \in \dot{H}^1(\mathbb{R}^4) $ is such that:
\begin{equation} 
  \lim_{J \rightarrow \infty} \limsup_n \| e^{t\Delta}  w^J_n \|_{L^6_{t ,x}(\mathbb{R}^+ \times \mathbb{R}^4)} = 0,  
\label{holygrailH} 
\end{equation}
\begin{equation}  
  \lambda^j_n  w^J_n(\lambda^j_n x + x^j_n) \rightharpoonup 0, \hspace{0.6em} \text{in} \hspace{0.6em} 
  \dot{H}^1   (\mathbb{R}^4), \hspace{0.6em}  \forall j \leq J. \label{wwconv} 
\end{equation} 
Moreover, the scales are asymptotically orthogonal, in the sense that 
\begin{equation} 
  \frac{\lambda^j_n}{\lambda^{i}_n} + \frac{\lambda^{i}_n}{\lambda^j_n}+ \frac{|x^i_n - x^j_n|^2}{\lambda^{j}_n \lambda^{i}_n} \rightarrow + \infty, \hspace{0.6em} \forall i \neq j \label{orth}. 
\end{equation}
Furthermore, $ \text{for all} \hspace{0.6em} J \geq 1$ we have the following decoupling properties:
\begin{equation} 
  \|u_n\|^2_{\dot{H}^1} = \sum_{j=1}^J \| \phi^j \|^2_{\dot{H}^1} + \| w^J_n \|^2_{\dot{H}^1} + o_n(1) \label{H1dec} 
\end{equation}
and
\begin{equation} 
  E(u_n)= \sum_{j=1}^J E(\phi^j ) + E(w^J_n) + o_n(1). \label{Energy Splitting} 
\end{equation}
\end{proposition}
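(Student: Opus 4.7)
I would follow the Bahouri--Gérard / Keraani concentration-compactness paradigm, adapted to the heat flow setting along the lines of \cite{GIP,KK}. The engine of the proof is a refined version of the Sobolev--Strichartz inequality $\|e^{t\Delta}f\|_{L^6_{t,x}(\mathbb{R}^+\times\mathbb{R}^4)} \lesssim \|f\|_{\dot H^1}$, of the interpolated form
\[
  \|e^{t\Delta}f\|_{L^6_{t,x}(\mathbb{R}^+\times\mathbb{R}^4)}
  \lesssim \|f\|_{\dot H^1}^{\alpha}\,\|f\|_{\dot B^{-1}_{\infty,\infty}}^{1-\alpha}
\]
for an appropriate $\alpha\in(0,1)$, where the Besov norm admits the heat-semigroup characterization $\|f\|_{\dot B^{-1}_{\infty,\infty}} \simeq \sup_{s>0}\sqrt{s}\,\|e^{s\Delta}f\|_{L^\infty}$, invariant under $f\mapsto \lambda f(\lambda\cdot)$. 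The point is that a sequence which is $\dot H^1$-bounded but whose heat extension fails to vanish in $L^6_{t,x}$ must exhibit spatial-scale concentration visible in this weaker norm, and such concentration is exactly what permits extraction of a profile.

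\textbf{Extraction and iteration.} If $\limsup_n \|e^{t\Delta}u_n\|_{L^6_{t,x}}=0$, take $J=0$. Otherwise the refined estimate produces, along a subsequence, $s_n>0$ and $y_n\in\mathbb{R}^4$ with $\sqrt{s_n}\,|(e^{s_n\Delta}u_n)(y_n)|\geq c>0$. Set $\lambda^1_n:=\sqrt{s_n}$, $x^1_n:=y_n$, and rescale $v^1_n(x):=\lambda^1_n u_n(\lambda^1_n x+x^1_n)$. Then $\{v^1_n\}$ is $\dot H^1$-bounded and a scaling computation gives $|(e^{\Delta}v^1_n)(0)|\geq c$, so along a further subsequence $v^1_n \rightharpoonup \phi^1 \not\equiv 0$ in $\dot H^1$. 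Define
\[
  w^1_n(x) := u_n(x) - \tfrac{1}{\lambda^1_n}\phi^1\!\left(\tfrac{x-x^1_n}{\lambda^1_n}\right),
\]
which satisfies $\lambda^1_n w^1_n(\lambda^1_n\cdot + x^1_n)\rightharpoonup 0$ in $\dot H^1$ by construction. From this weak convergence the Pythagorean identity $\|u_n\|_{\dot H^1}^2 = \|\phi^1\|_{\dot H^1}^2 + \|w^1_n\|_{\dot H^1}^2 + o_n(1)$ is immediate. Now iterate the procedure on $w^1_n$ to extract $\phi^2,\phi^3,\ldots$ with parameters $(\lambda^j_n,x^j_n)$.

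\textbf{Orthogonality and decouplings.} The asymptotic orthogonality \eqref{orth} follows by contradiction: if for some $i<j$ the parameters failed to separate, a change of variables would display $\phi^j$ as a nonzero weak limit of $\lambda^i_n w^i_n(\lambda^i_n\cdot+x^i_n)$, contradicting the defining vanishing \eqref{wwconv} of $w^i_n$. The $\dot H^1$-decoupling \eqref{H1dec} is then an iterated Pythagorean identity, with \eqref{orth} used to kill bubble-bubble cross terms. For the $L^4$-component of the energy decoupling \eqref{Energy Splitting}, an iterated Brezis--Lieb argument combined once more with \eqref{orth} (expanding $|\sum_j \phi^j_n + w^J_n|^4$ and performing changes of variables to drive cross terms to zero) does the job.

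\textbf{Termination and main obstacle.} Since $\sum_j \|\phi^j\|_{\dot H^1}^2 \le \limsup_n\|u_n\|_{\dot H^1}^2<\infty$, we have $\|\phi^j\|_{\dot H^1}\to 0$ as $j\to\infty$. Because each extraction captures a fixed fraction of the current Besov concentration of $w^{J}_n$, and this concentration controls $\limsup_n\|e^{t\Delta}w^J_n\|_{L^6_{t,x}}$ via the refined estimate, the vanishing \eqref{holygrailH} follows by sending $J\to\infty$. The main technical obstacle is the refined Strichartz-type inequality above, which I would prove by a Littlewood--Paley decomposition of the heat evolution, parallel to Keraani's argument in the Schr\"odinger case; a secondary subtlety is that the heat semigroup acts only for $t\geq 0$, but this is a simplification rather than an obstruction, since no time-translation parameter is needed in the profile decomposition.
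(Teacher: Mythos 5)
Your proposal is sound and reaches the same statement by a genuinely different route from the one the paper takes. The paper (see its appendix and the references \cite{thesis,GR0}) works with the frequency--annulus norm $\|f\|_B=\sup_k\bigl(\int_{2^k\le|\xi|\le 2^{k+1}}|\hat f|^2\,d\xi\bigr)^{1/2}$ applied to $\nabla u_n$: it imports Gérard's $L^2$ profile decomposition for $\{\nabla u_n\}$ (so the scales and profiles are inherited from that theorem rather than constructed), uses Bulut's refined Sobolev inequality $\|u\|_{L^{2^*}}\lesssim\|\nabla u\|_{L^2}^{2/p}\|\nabla u\|_B^{1-2/p}$, observes that the heat flow does not increase $\|\nabla(\cdot)\|_B$, and then upgrades $L^\infty_tL^4_x$ smallness of $e^{t\Delta}w^J_n$ to $L^6_{t,x}$ smallness by interpolation with the standard heat estimates. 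You instead use the Gérard--Meyer--Oru-type improvement through $\dot B^{-1}_{\infty,\infty}$ with its heat-semigroup characterization, and extract each profile directly from a point of concentration $\sqrt{s_n}\,|(e^{s_n\Delta}u_n)(y_n)|\ge c$; this is more self-contained (it produces the translation centers $x^j_n$ intrinsically, whereas the paper's appendix is written only for radial data and defers the translated case to the thesis), at the cost of having to prove the refined inequality yourself -- though as you note it follows from $\|e^{t\Delta}f\|_{L^6_{t,x}}\lesssim\|f\|_{L^4}$ together with $\|f\|_{L^4(\R^4)}\lesssim\|f\|_{\dot H^1}^{1/2}\|f\|_{\dot B^{-1}_{\infty,\infty}}^{1/2}$. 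Two points you should make explicit when writing this up: (i) the quantitative nontriviality $\|\phi^{J+1}\|_{\dot H^1}\gtrsim\limsup_n\|w^J_n\|_{\dot B^{-1}_{\infty,\infty}}$, which drives termination, rests on the fact that $v\mapsto(e^{\Delta}v)(0)$ is a bounded linear functional on $\dot H^1(\R^4)$ (the time-one heat kernel lies in $\dot H^{-1}(\R^4)$), so it passes to the weak limit; and (ii) the orthogonality argument and the vanishing \eqref{wwconv} at \emph{all} previous scales $j\le J$ (not just the most recent) must be run as a joint induction, since showing that the newly subtracted bubble tends weakly to zero under the $j$-th rescaling itself uses the pairwise orthogonality already established at earlier stages.
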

For details about the proof see \cite{thesis, GR0}.
\subsection{Variational estimates}

The elementary variational inequalities we use are summarized here:
\begin{lemma} \label{VarLemma} (Variational Estimates)
\begin{enumerate} 
\item
If
\[
  \| \nabla u_0 \|^2_{L^2} \leq \| \nabla W \|^2_{L^2}, \quad
  E(u_0) \leq (1-{\delta_0}) E(W), \;\; \delta_0 > 0 ,
\]  
then there exists $\bar{\delta} = \bar{\delta} (\delta_0) > 0$ such that
for all $t \in [0, T_{max}(u_0))$, the solution of~\eqref{CP} satisfies
\begin{equation}   \label{gradient trapping} 
  \int |\nabla u(t)|^2 \leq (1-\bar{\delta}) \int  |\nabla W|^2.   
\end{equation}
\item
If~\eqref{gradient trapping} holds, then
\begin{equation} \label{used trapping}
  \int (|\nabla u(t) |^2 - |u(t)|^4 ) dx  \geq \bar{\delta} \int  |\nabla u(t) |^2 
\end{equation}
and moreover $E(u(t)) \geq 0$.
\end{enumerate}
\end{lemma}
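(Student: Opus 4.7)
The approach is the standard Kenig--Merle-style variational/continuity argument built on two ingredients: the Sobolev inequality~\eqref{Sobolev} with sharp constant $\|\nabla W\|_{L^2}^{-1/2}$, and the Pohozaev-type identity $\|\nabla W\|_{L^2}^2 = \|W\|_{L^4}^4$ (obtained by testing~\eqref{static} against $W$), which in particular gives $E(W) = \tfrac14 \|\nabla W\|_{L^2}^2$. Combining these, for any $u \in \dot H^1$ one has
\begin{equation*}
  E(u) \;\geq\; \tfrac12 \|\nabla u\|_{L^2}^2 - \tfrac14 \frac{\|\nabla u\|_{L^2}^4}{\|\nabla W\|_{L^2}^2} \;=\; E(W)\bigl[\,2y - y^2\,\bigr],
  \qquad y := \frac{\|\nabla u\|_{L^2}^2}{\|\nabla W\|_{L^2}^2}.
\end{equation*}
So $E(u) \geq E(W)\bigl[1 - (y-1)^2\bigr]$, and the function $f(y) = 1-(y-1)^2$ captures everything.

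For part (1), I would apply this inequality to $u(t)$ for $t \in [0, T_{\max})$. Energy dissipation~\eqref{dissipation} yields $E(u(t)) \leq E(u_0) \leq (1-\delta_0) E(W)$, hence $f(y(t)) \leq 1 - \delta_0$, i.e.\ $(y(t)-1)^2 \geq \delta_0$. Since $y(0) \leq 1$ by hypothesis, the initial value satisfies $y(0) \leq 1-\sqrt{\delta_0}$. The map $t \mapsto y(t)$ is continuous (by $u \in C(I;\dot H^1)$) and cannot jump the forbidden gap $(1-\sqrt{\delta_0},\, 1+\sqrt{\delta_0})$, so it is trapped on the left side for all $t$. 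This gives~\eqref{gradient trapping} with, e.g., $\bar\delta = \sqrt{\delta_0}$.

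For part (2), the Sobolev inequality rewrites as $\|u\|_{L^4}^4 \leq \|\nabla u\|_{L^2}^4 / \|\nabla W\|_{L^2}^2$. Under~\eqref{gradient trapping} the ratio $\|\nabla u\|_{L^2}^2/\|\nabla W\|_{L^2}^2 \leq 1-\bar\delta$, so $\|u\|_{L^4}^4 \leq (1-\bar\delta)\|\nabla u\|_{L^2}^2$, yielding~\eqref{used trapping} immediately. The nonnegativity $E(u) \geq 0$ follows from the same inequality since $E(u) \geq \tfrac12 \|\nabla u\|_{L^2}^2 - \tfrac14 (1-\bar\delta)\|\nabla u\|_{L^2}^2 = \tfrac{1+\bar\delta}{4}\|\nabla u\|_{L^2}^2 \geq 0$.

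The only subtle step is the continuity/trapping argument in part (1): one must know that $y(t)$ varies continuously so that it cannot tunnel through the gap, which is why the fact that solutions lie in $C(I;\dot H^1)$ (from Theorem~\ref{lwp}) is essential. Everything else is pure manipulation of the Sobolev inequality and the identity $E(W) = \tfrac14 \|\nabla W\|_{L^2}^2$. No obstacle here is genuinely difficult; the lemma is purely a convenient packaging of the sharp Sobolev inequality together with the focusing energy structure.
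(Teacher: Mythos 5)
Your proposal is correct and follows essentially the same route as the paper: part (2) is the identical direct computation from the sharp Sobolev inequality, and part (1) is the standard Kenig--Merle energy-trapping argument (the function $f(y)=E(W)[2y-y^2]$, energy dissipation, and continuity of $t\mapsto\|\nabla u(t)\|_{L^2}^2$) that the paper invokes by citing Lemma 3.4 of \cite{KMa}. The only cosmetic difference is that you phrase the trapping via the forbidden gap $(1-\sqrt{\delta_0},1+\sqrt{\delta_0})$ rather than via monotonicity of $f$ and its inverse, which also gives the explicit constant $\bar\delta=\sqrt{\delta_0}$.
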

\begin{proof}
The second statements are an immediate consequence of the 
sharp Sobolev inequality~\eqref{Sobolev}:
\[
  \int (|\nabla u(t) |^2 - |u(t)|^4 ) dx  \geq \left[1-\left(\frac{\| \nabla u(t) \|_{L^2}}{\| \nabla W \|_{L^2}}\right)^2
  \right] \| \nabla u  \|^2_{L^2} \gtrsim \| \nabla u \|_{L^2}^2
\] 
while the first follows easily from Sobolev and energy dissipation~\eqref{energy dissipation};
see, e.g., Lemma 3.4/Theorem 3.9  in \cite{KMa}.
\end{proof}

\section{Asymptotic decay of global solutions} \label{no infinite}

In this section we prove the following theorem:
\begin{theorem} \label{Decay Theorem} 
If $u \in C([0,\infty); \dot{H}^1(\mathbb{R}^4))$ is a solution to equation (\ref{CP}) which moreover satisfies 
\begin{equation}  \label{condition} 
  \sup_{t \geq 0} \| \nabla u(t) \|_{L^2} < \| \nabla W \|_{L^2}, 
\end{equation}   
then 
\begin{equation*} 
  S_{\R_+}(u) < \infty \;\; \mbox{ and } \;\;  
  \lim_{t \rightarrow \infty} \| u(t) \|_{ \dot{H}^1} = 0.
\end{equation*}  
\end{theorem}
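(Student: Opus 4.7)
The plan follows the strategy sketched in the introduction: decay for small $\dot H^1$-solutions is established first as a refinement of the local theory, and the general case is reduced to the small case using an $L^2$-dissipation identity applied to a solution-splitting that bypasses the fact that $u$ need not belong to $L^2$.

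First I would prove a refined small-data lemma: there exists $\eta>0$ such that if $\|\nabla u(T)\|_{L^2}\leq\eta$ at some time $T$, then not only $S_{[T,\infty)}(u)<\infty$ but also $\|\nabla u(t)\|_{L^2}\to 0$ as $t\to\infty$. The global bounds on $[T,\infty)$ follow from Theorem~\ref{lwp}(5) applied at time $T$, via $\|e^{(t-T)\Delta}u(T)\|_{S([T,\infty))}\lesssim\|u(T)\|_{L^4}\lesssim\|\nabla u(T)\|_{L^2}$ (heat estimate with $L^4$ endpoint combined with Sobolev). For the $\dot H^1$-decay I would argue term by term in the Duhamel formula $\nabla u(t)=e^{(t-T)\Delta}\nabla u(T)+\int_T^t e^{(t-s)\Delta}\nabla(|u|^2u)\,ds$: the linear piece vanishes in $L^2$ by Fourier-side dominated convergence, while the nonlinear piece is handled using finiteness of $\||u|^2\nabla u\|_{L^{q'}_tL^{p'}_x([T,\infty))}$ for a suitable admissible pair (via the bounds of Theorem~\ref{lwp}(5)), splitting $[T,t]$ into $[T,t/2]\cup[t/2,t]$ to separately exploit the integrable tail and the heat smoothing.

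To produce a time at which this small-data regime is reached, I split the initial data as $u_0=v_0+w_0$ with $v_0:=\chi_R u_0$ for a smooth spatial cutoff $\chi_R$ to $\{|x|\leq R\}$. Since $u_0\in L^4$ by Sobolev, $v_0\in L^2\cap\dot H^1$; moreover $\|w_0\|_{\dot H^1}\to 0$ as $R\to\infty$ (the commutator $u_0\nabla\chi_R$ is handled by H\"older on the annular support of $\nabla\chi_R$), so for any target $\delta>0$ one can arrange $\|w_0\|_{\dot H^1}<\delta$. Let $w$ be the global small solution of~\eqref{CP} with data $w_0$ furnished by Theorem~\ref{lwp}(5), and set $v:=u-w$, so that $v(0)=v_0\in L^2$ and $v_t=\Delta v + (|u|^2u-|w|^2w)$. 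Testing against $v$ (writing the nonlinearity schematically as $v^3+3v^2w+3vw^2$ in the real-valued model case) gives
\[
\tfrac12\tfrac{d}{dt}\|v\|_{L^2}^2+\|\nabla v\|_{L^2}^2-\|v\|_{L^4}^4 = 3\int v^3 w\,dx + 3\int v^2 w^2\,dx.
\]
Because $\|\nabla u\|_{L^2}$ is strictly below $\|\nabla W\|_{L^2}$ by~\eqref{condition} and $\|\nabla w(t)\|_{L^2}\lesssim\delta$ uniformly, one has $\|\nabla v(t)\|_{L^2}<\|\nabla W\|_{L^2}$ for $\delta$ small, so the sharp Sobolev inequality~\eqref{Sobolev} produces a gap $\|\nabla v\|_{L^2}^2-\|v\|_{L^4}^4\geq\bar\delta'\|\nabla v\|_{L^2}^2$ for a fixed $\bar\delta'>0$. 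The cross terms are dominated pointwise in time by
\[
\|\nabla v\|_{L^2}^3\|\nabla w\|_{L^2}+\|\nabla v\|_{L^2}^2\|\nabla w\|_{L^2}^2\;\lesssim\;\delta\,\|\nabla v\|_{L^2}^2,
\]
after pulling out the uniform bounds on $\|\nabla v\|_{L^2}$ and $\|\nabla w\|_{L^2}$. Choosing $\delta$ small enough to absorb these into $\bar\delta'\|\nabla v\|_{L^2}^2$ and integrating yields $\|v(t)\|_{L^2}^2+c\int_0^t\|\nabla v(s)\|_{L^2}^2\,ds\leq\|v_0\|_{L^2}^2$ for some $c>0$, and in particular $\int_0^\infty\|\nabla v\|_{L^2}^2\,dt<\infty$.

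This forces $\liminf_{t\to\infty}\|\nabla v(t)\|_{L^2}=0$; combined with the refined small-data statement of the first step applied to the small solution $w$ (giving $\|\nabla w(t)\|_{L^2}\to 0$), one obtains a sequence $t_n\to\infty$ with $\|\nabla u(t_n)\|_{L^2}\to 0$. Picking $n$ large enough that $\|\nabla u(t_n)\|_{L^2}<\eta$ and invoking the refined small-data result at $t_n$, together with $S_{[0,t_n]}(u)<\infty$ from local wellposedness, completes the proof. The main obstacle is the energy estimate of the third paragraph: since the focusing cubic provides no intrinsic $L^2$-dissipation, the whole argument rests on the variational gap $\bar\delta'$ surviving the perturbation $u\mapsto v=u-w$, which is precisely what forces the splitting to deliver $w_0$ with small $\dot H^1$-norm (at the price of $v_0$ having possibly large $L^2$-norm).
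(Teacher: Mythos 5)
Your overall architecture coincides with the paper's: prove decay for global solutions with finite $S$-norm (in your case packaged as a refined small-data statement), then reach the small-data regime by splitting off a small-$\dot H^1$ piece $w_0$, solving for $w$, and running the $L^2$-dissipation identity for $v=u-w$, with the variational gap from the sharp Sobolev inequality surviving the perturbation because $\|\nabla w(t)\|_{L^2}\lesssim\delta$ uniformly. Your spatial cutoff $v_0=\chi_R u_0$ (in place of the paper's unspecified splitting) is fine: $\chi_R u_0\in L^2$ since $u_0\in L^4$, and the commutator term is controlled exactly as you say. The absorption of the cross terms and the endgame (finiteness of $\int_0^\infty\|\nabla v\|_{L^2}^2\,dt$, hence a time at which $\|\nabla u\|_{L^2}$ is small, then restart with the small-data theory) also match the paper.

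The one step that does not close as written is the decay of the ``head'' of the Duhamel integral. On $[T,t/2]$ you propose to use heat smoothing, i.e.\ a quantitative decay estimate for $e^{(t-s)\Delta}$ applied to $\nabla(|u|^2u)(s)$ when $t-s\geq t/2$. But this is scaling-critical: with $u(s)\in L^4$ and $\nabla u(s)\in L^2$ the pointwise-in-time bound is $\|e^{\sigma\Delta}\nabla(u^3)(s)\|_{L^2}\lesssim\sigma^{-1}$ (and any admissible space-time pairing yields the same critical rate), so $\int_T^{t/2}(t-s)^{-1}\,ds\to\log 2$: the contribution stays bounded but does not tend to zero. The paper avoids this by first fixing $\tau$ \emph{independent of} $t$ so that the tail over $[\tau,\infty)$ contributes $O(\epsilon^3)$ via the $L^6_{t,x}$ and $L^3_{t,x}$ smallness, and then writing the head as $S(t-\tau)g_\tau$ with $g_\tau=\int_0^\tau S(\tau-s)u^3\,ds$ a \emph{fixed} element of $\dot H^1$; the decay then comes from the qualitative fact that $\|S(\sigma)g\|_{L^2}\to 0$ for each fixed $g\in L^2$ (proved by approximating $\nabla g_\tau$ in $L^1\cap L^2$), not from a rate. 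Your moving cut at $t/2$ leaves you with $e^{(t/2)\Delta}$ acting on a merely bounded, $t$-dependent family, for which no such qualitative decay is available. Replacing $t/2$ by a fixed $\tau$ chosen after the tail is made small repairs the argument; the rest of your proposal then goes through as in the paper.
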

\begin{proof}
The general strategy, drawn from the techniques of \cite{GIP} for the Navier-Stokes 
equations, is as follows. 
We first show that global solutions for which $S_{\R^+}(u) < \infty$
-- which includes {\it small} solutions by the small data theory~\eqref{small data} -- 
decay to zero in the $\dot{H}^1-$norm. 
Second, we impose the extra assumption of $H^1-$ data, so that we may 
exploit the $L^2-$dissipation relation to show finiteness of 
$\| \nabla u \|_{L^2_{x,t}}$, which in turns allows us to reduce matters 
to the case of small $\dot{H}^1$ data.
Finally, to remove this extra assumption, we split the initial data in frequency, and estimate a 
perturbed equation. 

\begin{proposition} \label{decay global}
If $u$ is a global solution of~\eqref{CP} with $S_{\R^+}(u) < \infty$, then
\begin{equation} \label{decayto0}
   \lim_{t \to \infty} \| \nabla u (t,\cdot) \|_{L^2} = 0.
\end{equation}
\end{proposition}
\begin{proof}
Let  $u \in (C_t \dot{H}^1_x \cap L^{6}_{t,x}) (\mathbb{R}_{+} \times \mathbb{R}^4)$
be a global solution to~\eqref{CP}.
Just as one proves the blow-up criterion for the local theory Theorem~\ref{lwp},
we first show:
\begin{claim} 
$\| \nabla u\|_{L^{3}_{t,x}(\R^+ \times \R^4)} < \infty.$ 
\end{claim}
\begin{proof}
Since $u \in L^6_{t,x}(\R^+ \times \R^4)$, given $\eta > 0$, we may subdivide 
$R^+ = [0,\infty)$ into a finite number of subintervals 
$I_j = [a_j, a_{j+1})$, $j = 0,1,\ldots, J$, $0 = a_0 < a_1 < \cdots < a_J = \infty$,
on which $\|u\|_{L^{6}_{t,x}} (I_j) \leq \eta$.  
Taking $\nabla$ in the Duhamel formula~\eqref{duhamel} and using~\eqref{STdecayheat}:
\begin{equation*} 
\begin{split}
  \| \nabla u \|_{(L^{3}_{t,x} \cap L^\infty_t L^2_x)}(I_0) & \leq C \|e^{t\Delta} \nabla u_0\|_{L^2} 
  + C\| \int_0^t S(t-s) \nabla(u^3) ds \|_{L^{3}_{t,x}(I_0)}  \\
  &\leq C\|u_0\|_{\dot{H}^1} + C \| u^2 \nabla u\|_{L^{3/2}_{t,x}(I_0)} \\
  & \leq C\|u_0\|_{\dot{H}^1} + C \|u\|^2_{L^{6}_{t,x}(I_0)} \| \nabla u\|_{L^{3}_{t,x}(I_0)},
\end{split}
\end{equation*}
so by choosing $\eta < \frac{1}{\sqrt{2C}}$ we ensure
\begin{equation*} 
  \| \nabla u\|_{(L^{3}_{t,x} \cap L^\infty_t L^2_x)(I_0)} \leq 2 C \|u_0\|_{\dot{H}^1}. 
\end{equation*}
In particular $\| u(a_1)\|_{\dot H^1} \leq 2C \| u_0 \|_{\dot H^1}$, and so 
we may repeat this argument on the next interval $I_1$ to find
$\| \nabla u\|_{(L^{3}_{t,x} \cap L^\infty_t L^2_x)(I_1)} \leq (2 C)^2 \|u_0\|_{\dot{H}^1}$,
and, continuing, 
$\| \nabla u\|_{(L^{3}_{t,x} \cap L^\infty_t L^2_x)(I_j)} \leq (2 C)^{j+1} \|u_0\|_{\dot{H}^1}$,
for $j = 0,1,\ldots,J$. The claim follows.
\end{proof}
Now denote the linear evolution by $S(t) = e^{t \Delta}$, so the solution in Duhamel form 
is written
\[
  u(t) = S(t) u_0 + \int_{0}^t S(t-s) u^3(s) ds. 
\]
Let \begin{equation*}
  \text{I} := S(t) u_0, \quad \text{II} := \int_{0}^{\tau} S(t-s) u^3(s) ds, \quad
  \text{III}:= \int_{\tau}^t S(t-s) u^3(s) ds,
\end{equation*}
for some $\tau$ to be determined later.\\

For term $\text{I}$ we will take advantage of the decay of the heat propagator. 
By density, we can approximate $\nabla u_0$ by $v \in L^1 \cap L^2$  
and use a standard heat estimate:
\begin{equation*}
\begin{split}
  \|\text{I}\|_{\dot{H}^1} = \|S(t) \nabla u_0 \|_{L^2} 
  &\leq \|S(t) (\nabla u_0 - v) \|_{L^2}  + \|S(t) v \|_{L^2} \\ 
  & \leq  \|\nabla u_0 - v \|_{L^2} + \|S(t) v \|_{L^2}. 
\end{split}
\end{equation*}

The first term can be made arbitrary small by the choice of $v$, while for the second,
by (\ref{STdecayheat}), $\|S(t) v \|_{L^2} \rightarrow 0$ as $ t \rightarrow \infty$, hence 
\[
  \| I \|_{\dot H^1} \to 0 \mbox{ as } t \to \infty.
\]

We now treat term III, which will allow us to fix $\tau.$ 
By the claim, for any $ \epsilon > 0, $ we can find $\tau$ such that $\|u\|_{L^{6}_{t,x}([\tau,\infty) \times \mathbb{R}^4)}, \| \nabla u\|_{L^{3}_{t,x}([\tau,\infty) \times \mathbb{R}^4)} \leq \epsilon.$ 
Since we are considering the limit $t \rightarrow \infty$, we may assume 
$ t > \tau \gg 1$, and so by the same estimate of the nonlinear term as in the proof
of the claim, 
\begin{equation*} 
  \|III \|_{\dot{H}^1} \lesssim  \|u\|_{L^{6}_{t,x}([\tau,t) \times \mathbb{R}^4)}^2
  \| \nabla u\|_{L^{3}_{t,x}([\tau,t) \times \mathbb{R}^4)} \lesssim \epsilon^3.
\end{equation*} 

  Having fixed $\tau$ in this manner, we turn to term II.  First notice that
\begin{equation*} 
  II = \int_0^{\tau} S(t-s) u^3(s) ds =  S(t-\tau)  \int_0^{\tau} S(\tau-s) u^3(s) ds.  
\end{equation*}
Since $\ds \int_0^{\tau} S(\tau-s) u^3(s) ds  \in \dot{H}^1$ (by $u \in L^6_{x,t}$ and~\eqref{STdecayheat}),
the same approximation argument used for term I shows 
\begin{equation*} 
  \| II \|_{\dot H^1} = 
  \| S(t-\tau)  \int_0^{\tau} S(\tau-s) u^3(s) ds \|_{\dot{H}^1} \xrightarrow {t \to \infty} 0. \end{equation*}
Since $\e$ was arbitrary,~\eqref{decayto0} follows.
\end{proof}

Now if we assume $u_0 \in H^1(\R^4)$, multiplying~\eqref{CP} by $u$ and integrating
over space-time yields the $L^2$ dissipation relation 
\begin{equation} \label{L2disd}
  \| u(t) \|^2_{L^2} = \| u_0 \|^2_{L^2} + 2 \int_0^t \int_{\mathbb{R}^4} [ u^4 - |\nabla u|^2] dx ds .  
\end{equation} 
Because of~\eqref{condition}, we have the variational estimate (\ref{used trapping}) and so
for some $\bar{\delta} > 0$,
\begin{equation*} 
   \sup_{t \geq 0} \| u(t) \|^2_{L^2} + 2 \bar{\delta} \| \nabla u\|^2_{L^2_{t,x}( \mathbb{R}_{+} \times \mathbb{R}^4)}    \leq  \| u_0 \|^2_{L^2}.  
\end{equation*}
This estimate immediately implies that for any $\epsilon_0 > 0$, 
there is some time $t_0$ such that $\|u(t_0)\|_{\dot{H}^1} \leq \epsilon_0$, 
and we can directly apply the small data result~\eqref{small data} 
(with initial time $t = t_0$) to conclude that $S_{\R^+}(u) < \infty$, 
and so by Proposition~\ref{decay global},
$\ds \lim_{t \to \infty} \| u(t) \|_{\dot H^1} = 0$, as required.

To remove the extra assumption $u_0 \in L^2$, split
\[ 
  u_0 = w_0 + v_0, \qquad \|w_0\|_{\dot{H}^1} \ll 1,
  \qquad v_0 \in H^1.
\]
Define $w(t)$ to be the solution to~\eqref{CP} with initial data $w_0$:
\[
\begin{split}
&w_t = \Delta w + w^3 \\
&w(0,x) = w_0(x) \in \dot{H}^1(\mathbb{R}^4).
\end{split}
\]
From the small data theory~\eqref{small data}, 
$w \in C_t \dot H^1_x(\R_+ \times \R^4)$ is global, with
\begin{equation} \label{w small}
  \| w \|_{L^6_{t,x}(\mathbb{R}_{+} \times \mathbb{R}^4)}
  + \| \nabla w \|_{(L^\infty_t L^2_x \cap L^3_{t,x})(\mathbb{R}_{+} \times \mathbb{R}^4)}
  \lesssim \| \nabla w_0 \|_{L^2} \ll1
\end{equation}
and by Proposition~\ref{decay global}, 
$\ds \|w(t)\|_{\dot{H}^1} \xrightarrow{ t \to \infty} 0.$

Defining $v$ by $v:= u  - w,$ it will be a solution of the perturbed equation 
\begin{equation*} 
  v_t - \Delta v = v^3 + 3 w^2 v + 3 w v^2 .
\end{equation*}
Just as in the derivation of the $L^2$-dissipation relation~\eqref{L2disd}, 
multiply by $v$ and integrate in space-time: 
\begin{equation*} 
  \|v(t)\|^2_{L^2} - \|v_0\|^2_{L^2} +2 \int_0^t \|\nabla v\|^2_{L^2} = 2\int_0^t \|v\|^4_{L^4} 
  + 6 \int_0^t \int_{\mathbb{R}^4} w^2v^2 + 6 \int_0^t \int_{\mathbb{R}^4} w v^3 .
\end{equation*}
By~\eqref{w small}, picking $\| \nabla w_0 \|_{L^2}$ small enough, ensures that
condition~\eqref{condition} holds also for $v$: 
$\ds \sup_{t \geq 0} \| \nabla v(t) \|_{L^2} < \| \nabla W \|_{L^2}$. Hence by~\eqref{used trapping},
for some $\bar{\delta} > 0$,
\[
  \| v(t) \|^2_{L^2}  + \bar{\delta} \int_0^t \|\nabla v\|^2_{L^2} \lesssim
  \| v_0 \|^2_{L^2}  + \int_0^t \int_{\mathbb{R}^4} w^2v^2 + 6 \int_0^t \int_{\mathbb{R}^4} w v^3,
\]
and so by H\"older and Sobolev,
\[ 
\begin{split}
  \| v(t) \|^2_{L^2}  + \bar{\delta} \|\nabla v\|^2_{L^2 L^2} & \lesssim
  \| v_0 \|^2_{L^2}  + \| w \|^2_{L^{\infty} L^4} \| v \|_{L^2 L^4}^2
  + \| w \|_{L^\infty L^4} \| v \|_{L^\infty L^4} \| v \|_{L^2 L^4}  \\
  & \lesssim \| v_0 \|^2_{L^2}  + \| \nabla w \|^2_{L^{\infty} L^2} \| \nabla v \|_{L^2 L^2}^2\\
  &+ \| \nabla w \|_{L^\infty L^2} \| \nabla v \|_{L^\infty L^2} \| \nabla v \|_{L^2 L^2} .
\end{split}
\]
So by~\eqref{w small}, choosing $\| \nabla w_0 \|_{L^2}$ small enough yields 
$\ds \int_0^\infty \| \nabla v \|^2_{L^2} dt < \infty$, and hence there is $T > 0$ 
for which $ \| \nabla v(T) \|_{L^2} < \|w_0\|_{\dot{H}^1}$ and so  
$\|\nabla u(T)\|_{L^2} \leq 2 \|\nabla w_0\|_{L^2}.$ 
Choosing $\|\nabla w_0\|_{L^2}$ smaller still, if necessary,  we are able to apply the 
small data result~\eqref{small data} to conclude $S_{\R_+}(u) < \infty$, and 
moreover by Proposition~\ref{decay global},  
\begin{equation*}  
  \lim_{t \to \infty} \|u(t)\|_{\dot{H}^1} = 0, 
\end{equation*}  
concluding the proof of the theorem.
\end{proof}

\section{Minimal blow-up solution} \label{Minimal Section}

For any $ 0 \leq E_0 \leq \| \nabla W \|_2^2,$ we define
\begin{equation*}
  L(E_0):= \sup \{ S_I(u) \; | \;  u \mbox{ a solution of~\eqref{CP} on } I \mbox{ with } 
  \sup_{t \in I } \| \nabla u(t) \|_2^2 \leq E_0 \} ,
\end{equation*}
where $I = [0,T)$ denotes the existence interval of the solution in question.
$L: [0, \| \nabla W\|_2^2] \rightarrow [0, \infty]$ is a continuous (this follows from Proposition~\ref{perturbation}), 
non-decreasing function with  $L(\| \nabla W \|_2^2) = \infty.$  
Moreover, from the small-data theory~\eqref{small data}, 
$$ L(E_0) \lesssim E_0^3 \hspace{0.4em} \text{for} \hspace{0.4em}  E_0 \leq \e_0.$$
Thus, there exists a unique critical kinetic energy $E_c \in (0, \| \nabla W\|_2^2]$ such that 
\[
  L(E_0) < \infty \mbox{ for } E_0 < E_c, \quad L(E_0) = \infty \mbox{ for } E_0 \geq E_c.
\]
In particular, if $ u: I \times \mathbb{R}^4 \rightarrow \mathbb{R}$  is a maximal-lifespan 
solution, then
\[
  \sup_{t \in I} \| \nabla u(t) \|_2^2 < E_c \; \implies \;  u \mbox{ is global, and }
  \| u \|_{S(\mathbb{R}^{+})} \leq L(\sup_{t \in I} \| \nabla u(t) \|_2^2 ) 
  < \infty.
\]
The goal of this section is the proof of  the following theorem: 
\begin{theorem} \label{critical thm} 
There is a maximal-lifespan solution $u_c : I \times \mathbb{R}^4 \rightarrow \mathbb{R}$ to (\ref{CP}) such that 
$\ds \sup_{t \in I} \| \nabla u_c(t) \|^2_{L^2} = E_c$,  $\|u_c\|_{S(I)} = +\infty$.
Moreover, there are $x(t) \in \mathbb{R}^4, \lambda(t) \in \mathbb{R}^+, $ such that 
\begin{equation} \label{precompact}
  K= \left\{ \frac{1}{\lambda(t)} u_c \left( t, \frac{x-x(t)}{\lambda(t)} \right) \; \big| \; t \in I  \right\}
\end{equation}
is precompact in $\dot{H}^1$.
\end{theorem}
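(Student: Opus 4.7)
The plan is to implement the Kenig--Merle concentration-compactness scheme, adapted to the parabolic setting as in \cite{KV0, KK}. The two main ingredients will be the profile decomposition Proposition~\ref{PD} and the perturbation result Proposition~\ref{perturbation}. First I would extract an optimizing sequence: by definition of $E_c$, there exist maximal-lifespan solutions $u_n:[0,T_n)\times\mathbb{R}^4\to\mathbb{C}$ with $\sup_{t}\|\nabla u_n(t)\|_{L^2}^2\to E_c$ and $\|u_n\|_{S([0,T_n))}\to\infty$. Since $\{u_n(0)\}$ is bounded in $\dot H^1$, I apply Proposition~\ref{PD} to extract profiles $\phi^j\in\dot H^1$, scales $\lambda_n^j$, centers $x_n^j$, and remainders $w_n^J$. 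To each profile I associate the maximal-lifespan nonlinear profile $v^j$, i.e.\ the solution of \eqref{CP} with data $\phi^j$, and set $v_n^j(t,x):=(\lambda_n^j)^{-1}v^j\bigl((\lambda_n^j)^{-2}t,(\lambda_n^j)^{-1}(x-x_n^j)\bigr)$. The $\dot H^1$-decoupling \eqref{H1dec} gives $\sum_j\|\nabla\phi^j\|_{L^2}^2+\limsup_n\|w_n^J\|_{\dot H^1}^2\leq E_c$.

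The crux is showing that exactly one profile is nontrivial. Suppose, for contradiction, at least two profiles have positive $\dot H^1$-norm; then $\|\nabla\phi^j\|_{L^2}^2<E_c$ for every $j$, so by definition of $E_c$ each $v^j$ is global with finite $S$-norm bounded by $L(\|\nabla\phi^j\|_{L^2}^2)$. Using the asymptotic orthogonality \eqref{orth} of scales and centers, the profiles $v_n^j$ should decouple in the relevant space-time norms, so that
\[
\tilde u_n^J(t):=\sum_{j=1}^J v_n^j(t)+e^{t\Delta}w_n^J
\]
is an approximate solution of \eqref{CP} on $[0,T_n)$ with $S$-norm bounded uniformly in $n,J$ and initial data matching $u_n(0)$ in $\dot H^1$. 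The control of $e^{t\Delta}w_n^J$ comes from its small $L^6_{t,x}$-norm \eqref{holygrailH} (applied with $J,n$ large) combined with \eqref{small data}. Proposition~\ref{perturbation} would then produce $u_n$ with uniformly bounded $\|u_n\|_{S([0,T_n))}$, contradicting $\|u_n\|_{S}\to\infty$. Hence exactly one profile survives, say $\phi^1$, with $\|\nabla\phi^1\|_{L^2}^2=E_c$ and $w_n^1\to 0$ in $\dot H^1$; after rescaling, the corresponding nonlinear profile $v^1$ is the critical element $u_c$, and its $S$-norm is infinite by minimality of $E_c$.

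To prove precompactness of $K$, I would apply the same profile-decomposition argument to the sequence $\{u_c(\tau_n)\}$ for any $\tau_n\in I$. Since $\|u_c\|_{S([\tau_n,T_{\max}))}=\infty$ for every $n$ (otherwise one could glue past $\tau_n$ via the local theory to produce a finite-$S$-norm solution at critical kinetic energy), the same contradiction forces the profile decomposition of $u_c(\tau_n)$ to contain a single nontrivial profile carrying all the $\dot H^1$-mass. Setting $\lambda(\tau_n):=\lambda_n^1$ and $x(\tau_n):=x_n^1$, this yields convergence of the rescaled, translated $u_c(\tau_n)$ in $\dot H^1$, which is precisely \eqref{precompact}.

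The main obstacle I anticipate is the decoupling argument at the heart of step two: one must show that the nonlinear cross-terms in $\bigl|\tilde u_n^J\bigr|^2\tilde u_n^J-\sum_j\bigl|v_n^j\bigr|^2 v_n^j$ vanish in the $\|\nabla(\cdot)\|_{L^{3/2}_{t,x}}$-norm required by Proposition~\ref{perturbation}, and that the interaction between $\sum_j v_n^j$ and $e^{t\Delta}w_n^J$ in the nonlinearity is similarly negligible. This relies on a rescaling analysis exploiting \eqref{orth} together with the space-time orthogonality of profiles supported at vastly separated scales or centers, and in particular on promoting the $L^6_{t,x}$-smallness \eqref{holygrailH} of $e^{t\Delta}w_n^J$ to smallness of its contribution to the cubic nonlinearity via H\"older, Sobolev, and the heat-equation bounds \eqref{STdecayheat}. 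This is standard but technically delicate in the parabolic Lebesgue norms, and will require the careful bookkeeping carried out in \cite{KV0}.
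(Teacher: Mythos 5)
Your overall architecture --- optimizing sequence, profile decomposition, nonlinear profiles, a perturbation argument to rule out multiple profiles, and a second application of the same machinery along arbitrary time sequences to get precompactness --- is the same as the paper's (which follows \cite{KV0}). However, there is a genuine gap at exactly the step you call the crux. You argue: if two profiles are nontrivial then $\|\nabla\phi^j\|_{L^2}^2<E_c$ for every $j$, ``so by definition of $E_c$ each $v^j$ is global with finite $S$-norm bounded by $L(\|\nabla\phi^j\|_{L^2}^2)$.'' That inference is valid for NLS, where conservation laws control the kinetic energy for all time by its initial value, but it fails here: $L(E_0)$ is defined via the constraint $\sup_{t\in I}\|\nabla u(t)\|_{L^2}^2\leq E_0$ on the \emph{entire trajectory}, and for the heat equation the kinetic energy is neither conserved nor monotone (only the full energy $E$ is dissipated). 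So $\|\nabla\phi^j\|_{L^2}^2<E_c$ does not imply $\sup_t\|\nabla v^j(t)\|_{L^2}^2<E_c$: a profile could start with small kinetic energy, have its kinetic energy climb past $E_c$ at later times, and develop infinite $S$-norm, and nothing in your argument excludes this. For the same reason your conclusion that the surviving profile satisfies $\|\nabla\phi^1\|_{L^2}^2=E_c$ is not what one actually obtains (nor what is needed); the correct statement is $\sup_{t\in I}\|\nabla u_c(t)\|_{L^2}^2=E_c$ for the associated solution.

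The paper closes this gap in two stages. First, assuming \emph{every} nonlinear profile has finite $S$-norm, the approximate-solution/perturbation argument you describe yields a contradiction, so at least one profile has infinite $S$-norm --- but a priori several might, and none of them individually need attain kinetic energy $E_c$ before its $S$-norm diverges. Second, a combinatorial argument (pigeonholing over truncated intervals $K_n^m$ on which the largest profile $S$-norm equals $m$) combined with Lemma~\ref{later times} --- kinetic energy decoupling \emph{for later times} $t\in K_n^m$, not merely at $t=0$ --- shows that the kinetic energies of the $v_n^j(t)$ and of $e^{t\Delta}w_n^J$ remain asymptotically orthogonal throughout $K_n^m$; comparing with \eqref{one side} and the definition of $E_c$ then forces equality everywhere, hence a single nontrivial profile and $w_n\to0$ in $\dot H^1$. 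This later-time decoupling lemma, whose proof itself invokes Theorem~\ref{Decay Theorem} to handle profiles evaluated at rescaled times tending to infinity, is the essential ingredient your proposal is missing; the rest of your outline (the cross-term estimates via \eqref{orth}, the treatment of $e^{t\Delta}w_n^J$ via \eqref{holygrailH}, and the precompactness step) matches the paper.
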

For the proof of this theorem we closely follow the arguments in \cite{KV0}.
The extraction of this minimal blow-up solution (and its compactness up to scaling and translation) will be a consequence of the following proposition:
\begin{proposition} \label{Palais-Smale}
Let $u_n : I_n \times \mathbb{R}^4$ be a sequence of solutions to (\ref{CP}) such that 
\begin{equation} 
\limsup_n \sup_{t \in I_n} \| \nabla u_n \|_2^2 = E_c  \hspace{0.4em} \text{and} \hspace{0.4em} \lim_{n \rightarrow \infty} \|u_n\|_{S(I_n)} = + \infty. 
\label{bad sequence} \end{equation} 
where $I_n$ are of the form $[0, T_n)$. 
Denote the initial data by $u_n(x,0) = u_{n,0}(x)$.
Then the sequence $\{ u_{n,0} \}_n$ converges, modulo scaling and translations, 
in $\dot{H}^1$ (up to an extraction of a subsequence). 
\end{proposition}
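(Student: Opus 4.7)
The plan is a standard Kenig--Merle-style concentration-compactness extraction: apply the profile decomposition of Proposition~\ref{PD} to the initial data $\{u_{n,0}\}$, build nonlinear profiles by evolving each $\phi^j$ under \eqref{CP}, and show via the perturbation result (Proposition~\ref{perturbation}) that unless exactly one profile carries the full critical kinetic energy $E_c$ with vanishing remainder, we contradict $\|u_n\|_{S(I_n)}\to\infty$. Since $\{u_{n,0}\}$ is bounded in $\dot H^1$ by \eqref{bad sequence}, Proposition~\ref{PD} gives, up to subsequence, $u_{n,0}=\sum_{j=1}^J \tfrac{1}{\lambda_n^j}\phi^j\!\left(\tfrac{\cdot-x_n^j}{\lambda_n^j}\right)+w_n^J$ with the asymptotic orthogonality \eqref{orth}, the $\dot H^1$-decoupling \eqref{H1dec}, and the linear smallness \eqref{holygrailH}.

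Next, for each $j$ let $v^j:[0,T^j)\times\R^4\to\C$ denote the maximal-lifespan solution of \eqref{CP} with $v^j(0)=\phi^j$, and set
\[
v_n^j(t,x)=\frac{1}{\lambda_n^j}\,v^j\!\left(\frac{t}{(\lambda_n^j)^2},\,\frac{x-x_n^j}{\lambda_n^j}\right),
\]
which solves \eqref{CP} on $[0,(\lambda_n^j)^2 T^j)$. By \eqref{H1dec} and $\limsup_n\|\nabla u_{n,0}\|_2^2\le E_c$, one has $\sum_{j=1}^\infty \|\phi^j\|_{\dot H^1}^2\le E_c$. I would then argue by contradiction: assume either (a) some $\|\phi^j\|_{\dot H^1}^2=E_c$ strictly less than the others vanish but $\|w_n^{J}\|_{\dot H^1}\not\to 0$, or (b) at least two profiles are nonzero. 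In both cases each $\|\phi^j\|_{\dot H^1}^2<E_c$; the variational Lemma~\ref{VarLemma} together with the fact that $E_c\le\|\nabla W\|_2^2$ propagates $\sup_t\|\nabla v^j\|_2^2<E_c$, so by the very definition of $E_c$ each $v^j$ is global with $\|v^j\|_{S(\R^+)}<\infty$, and in fact $\sum_{j}\|v^j\|_{S(\R^+)}^2\lesssim \sum_j \|\phi^j\|_{\dot H^1}^2\lesssim E_c$ via the small-data bound when $\|\phi^j\|_{\dot H^1}$ is small.

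I would then form the approximate solution
\[
\tilde u_n^J(t,x)=\sum_{j=1}^J v_n^j(t,x)+e^{t\Delta}w_n^J(x),
\]
and verify the hypotheses of Proposition~\ref{perturbation} on any compact subinterval of $I_n$. The $\dot H^1$- and $S$-bounds follow from the orthogonality \eqref{orth}, which makes the cross terms $\|v_n^j v_n^k\|$ vanish as $n\to\infty$ for $j\ne k$, plus the linear smallness \eqref{holygrailH} for $e^{t\Delta}w_n^J$. The error $e_n^J=(\partial_t-\Delta)\tilde u_n^J-|\tilde u_n^J|^2\tilde u_n^J$ expands into (i) a cubic cross-term piece that goes to $0$ as $n\to\infty$ by \eqref{orth}, and (ii) terms involving $e^{t\Delta}w_n^J$, controlled first by sending $n\to\infty$ (with $J$ fixed) and then $J\to\infty$ using \eqref{holygrailH}. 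Matching the initial data is immediate from the decomposition. Proposition~\ref{perturbation} then yields $\|u_n\|_{S(I_n)}\lesssim 1$ for $n$ large, contradicting \eqref{bad sequence}.

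Hence exactly one profile survives, say $\phi^1$, with $\|\phi^1\|_{\dot H^1}^2=E_c$ and $\|w_n^1\|_{\dot H^1}\to 0$; writing $\tilde u_{n,0}=\lambda_n^1 u_{n,0}(\lambda_n^1\cdot+x_n^1)$ gives $\tilde u_{n,0}\to \phi^1$ in $\dot H^1$, which is exactly convergence modulo the symmetries. The main obstacle is the perturbation step: one must carefully estimate $\|\nabla e_n^J\|_{L^{3/2}_{t,x}}$ using the asymptotic orthogonality of parameters to decouple the cubic cross terms, and ensure that the contribution of the linear flow of $w_n^J$ can be absorbed uniformly in $J$; this is where the compatibility of \eqref{holygrailH} with the $L^{3/2}_{t,x}$ error norm required by Proposition~\ref{perturbation} (via H\"older and Sobolev embedding on $\R^4$) must be checked explicitly.
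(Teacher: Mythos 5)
Your overall architecture (profile decomposition, nonlinear profiles, approximate solution $\sum_j v_n^j + e^{t\Delta}w_n^J$, perturbation theory to contradict $\|u_n\|_{S(I_n)}\to\infty$) matches the paper's, but there is a genuine gap at the decisive step. You argue that if more than one profile survives (or the remainder does not vanish), then each $\|\phi^j\|_{\dot H^1}^2<E_c$, and that Lemma~\ref{VarLemma} ``propagates'' $\sup_t\|\nabla v^j(t)\|_2^2<E_c$, so that each $v^j$ is global with finite $S$-norm ``by the very definition of $E_c$.'' This inference fails: the definition of $E_c$ requires $\sup_{t\in I}\|\nabla v^j(t)\|_2^2<E_c$ over the \emph{entire lifespan}, and for the heat equation the kinetic energy is not conserved and is not monotone --- only the full energy $E$ decreases. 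Lemma~\ref{VarLemma} only traps $\|\nabla v^j(t)\|_2^2$ below $(1-\bar\delta)\|\nabla W\|_2^2$, which can be far above $E_c$. So a profile with initial kinetic energy below $E_c$ may well exceed $E_c$ at later times, and you cannot conclude finiteness of its $S$-norm from the definition of $E_c$. This is precisely the difficulty the paper flags (``the kinetic energy is not conserved\dots the $S$-norm of several profiles is large over short times, while their kinetic energy does not achieve the critical value until later'').

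The paper closes this gap with a two-stage argument that your proposal omits. First, the perturbation argument is used only to show that \emph{at least one} profile $v^{j_0}$ has infinite $S$-norm (Claim~\ref{claim1}); this step is essentially what you wrote. Second, one must show that \emph{exactly one} profile is responsible: the paper introduces the truncated intervals $K_n^m$ on which $\sup_j\|v_n^j\|_{S(K_n^m)}=m$, applies a pigeonhole argument to single out a profile whose kinetic energy approaches $E_c$ along these intervals (inequality~\eqref{one side}), and then proves the ``kinetic energy decoupling for later times'' (Lemma~\ref{later times}), i.e.\ that $\|\nabla u_n^J(t)\|_2^2=\sum_j\|\nabla v_n^j(t)\|_2^2+\|\nabla w_n^J\|_2^2+o_n(1)$ uniformly for $t\in K_n^m$. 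Combining this with $\limsup_n\sup_t\|\nabla u_n(t)\|_2^2= E_c$ forces equality throughout, hence $J_1=1$, $v^j\equiv 0$ for $j\ge 2$, and $w_n\to 0$ in $\dot H^1$. The decoupling lemma is a nontrivial dynamical statement (it uses the asymptotic orthogonality of parameters, the decay Theorem~\ref{Decay Theorem} for the global small profiles, and the weak convergence of the remainders under the evolved symmetries); it cannot be replaced by the static decoupling~\eqref{H1dec} of the initial data, which is all your argument invokes. You should supply this second stage to complete the proof.
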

\begin{proof}
The sequence $\{ u_{n,0} \}_n $ is bounded in $\dot{H}^1$ by \eqref{bad sequence} so applying the profile decomposition (up to a further subsequence) we get
$$ u_{n,0}(x) = \sum_{j=1}^J \frac{1}{\lambda_n^j} \phi^j (\frac{x-x_n^j}{\lambda_n^j}) + w_n^J(x) $$ with the properties listed in Proposition~\ref{PD}.

Define the nonlinear profiles $v^j: I^j \times \mathbb{R}^4 \rightarrow \mathbb{R}$,
$I^j = [0, T^j_{max})$, associated to $\phi^j$ by setting them to be the maximal-lifespan solutions of (\ref{CP}) with initial data 
$v^j(0) =  \phi^j.$ 
Also, for each $j, n \geq 1$ we introduce 
$v_n^j: I_n^j \times \mathbb{R}^4 \rightarrow \mathbb{R}$  by 
\[
  v_n^j(t) =  \frac{1}{\lambda_n^j}  v^j \left( \frac{t}{(\lambda_n^j)^2}, \frac{x-x_n^j}{\lambda_n^j}
  \right), \quad
  I_n^j:= \{ t \in \mathbb{R}: \frac{t}{(\lambda_n^j)^2} \in I^j \} .
\]
Each $v_n^j$ is a solution with $v_n^j(0) =  \frac{1}{\lambda_n^j} \phi (\frac{x-x_n^j}{\lambda_n^j})$ and maximal lifespan $I_n^j  = [0, T^{n,j}_{\text{max}})$,
$T^{n,j}_{max} = (\lambda_n^j)^2 T^j_{max}$.

For large $n$, by the asymptotic decoupling of the kinetic energy (property (\ref{H1dec})), 
there is a $ J_0 \geq 1$ such that $\| \nabla \phi^j\|_2^2 \leq \e_0$ for all $ j \geq J_0$, 
where $\e_0$  is as in Theorem~\ref{lwp}, 4.
Hence, for $j \geq J_0$, the solutions $v_n^j$ are global and decaying to zero, and moreover 
\begin{equation} 
\sup_{t \in \mathbb{R}^{+}} \| \nabla v_n^j \|_2^2 + \| v_n^j\|^2_{S(\mathbb{R}^{+} \times \mathbb{R}^4)} \lesssim \| \nabla \phi^j \|_2^2 \label{when global} 
\end{equation} 
by the small data theory~\eqref{small data}.

\begin{claim}\label{claim1} (There is at least one bad profile).  
There exists $1 \leq j_0 < J_0 $ such that $ \| v^{j_0} \|_{S(I^{j_0})} = \infty$.
\end{claim}

\noindent For contradiction, assume that for all $1 \leq j < J_0$ 
\begin{equation}
   \| v^{j} \|_{S(I^{j})} < \infty  
\label{A} \end{equation}
which by the local theory implies $I^j = I^j_n = [0, \infty)$ for all such $j$ and for all $n$.
The goal is to deduce a bound on $ \|u_n\|_{S(I_n)} $for sufficiently large n. To do so, we will use Proposition~\ref{perturbation}, for which we first need to introduce a good approximate solution.

Define
\begin{equation} 
  u_n^J (t) = \sum_{j=1}^{J}  v_n^j(t) + e^{t \Delta} w_n^J .
\label{approximate}  
\end{equation}
We will show that for $n$ and $J$ large enough this is a good approximate solution (in the sense of Proposition \ref{perturbation}) and that $ \| u_n^J \|_{S([0,+\infty))}$ is uniformly bounded. The validity of both points implies that the true solutions $u_n$ should not satisfy \eqref{bad sequence}, reaching a contradiction.

First observe
\begin{equation} 
\sum_{j \geq 1}  \|v_n^j\|^2_{S([0,\infty))} = \sum_{j=1}^{J_0 - 1}  \|v_n^j\|^2_{S([0,\infty))}  +  \sum_{j \geq J_0}  \|v_n^j\|^2_{S([0,\infty))}  
\end{equation}
\begin{equation} 
\qquad \quad \lesssim 1 +  \sum_{j \geq J_0}  \| \nabla \phi^j\|_2^2 \lesssim 1 + E_c \end{equation}
where we have used \eqref{A}, property (\ref{H1dec})  and (\ref{bad sequence}).

Now, using the above and (\ref{holygrailH}) in Proposition~\ref{PD}:
\begin{equation}  
  \lim_{J \rightarrow \infty} \underset{n}{\overline{\text{lim} }} \hspace{0.2em} 
  \| u_n^{J} \|_{S([0,+\infty))} \lesssim 1 + E_c  .
\end{equation}
For convenience, denote
\[
  \| u \|_{\tilde{S}(I)} := \| \nabla u \|_{L^3_{x,t}(I \times \R^4)}.
\]
Under the assumption \eqref{A}, we can also obtain
\[
  \| v^{j} \|_{\tilde{S}(I^{j})} < \infty,
\]
and so similarly we have
\[
  \lim_{J \rightarrow \infty} \underset{n}{\overline{\text{lim} }} \hspace{0.2em} \| u_n^{J} \|_{\tilde{S}([0,+\infty))} < \infty . 
\]
To apply Proposition \ref{perturbation}, it suffices to show that $u_n^J$ asymptotically solves (\ref{CP}) in the sense that
\begin{equation*}   
\lim_{J \rightarrow \infty} \underset{n}{\overline{\text{lim} }} \hspace{0.2em} \|  \nabla [(\partial_t  - \Delta) u_n^J - F(u_n^J)] \|_{L_{t,x}^{\frac{3}{2}} ([0,+\infty) \times \mathbb{R}^4)} = 0 
\end{equation*}
which reduces (adding and subtracting the term  $F(\sum_{j=1}^J v_n^j)$ and using the triangle inequality) to proving
\begin{equation}   
  \lim_{J \rightarrow \infty} \underset{n}{\overline{\text{lim} }} \hspace{0.2em} \|  \nabla [\sum_{j=1}^J F(v_n^j) - F(\sum_{j=1}^J v_n^j)] \|_{L_{t,x}^{\frac{3}{2}} ([0,+\infty) \times \mathbb{R}^4)}  = 0 
\label{ena} \end{equation}
and 
\begin{equation}   
\underset{n}{\overline{\text{lim} }} \hspace{0.2em} \|  \nabla [ F(u_n^J - e^{t \Delta} w_n^J) - F(u_n^J)] \|_{L_{t,x}^{\frac{3}{2}} ([0,+\infty) \times \mathbb{R}^4)}  = 0.  
\label{dyo} \end{equation}
The following easy pointwise estimate will be of use: 
\begin{equation}  
  | \nabla [(\sum_{j=1}^J F(v_j) - F(\sum_{j=1}^J v_j) ] | \lesssim_J \sum_{i \neq j}  |\nabla v_j| |v_i|^2.
\label{pointwise} \end{equation}
We have shown that for all $j \geq 1$  and n large enough $v_n^j \in \tilde{S}([0,\infty)),$ so using property (\ref{orth})
\begin{equation*}  
  \underset{n}{\overline{\text{lim} }}  \| |v_n^j|^2 \nabla v_n^i \|_{L_{t,x}^\frac{3}{2}([0,\infty) \times \mathbb{R}^4)} = 0 
\end{equation*} for all $ i \neq j$; thus 
\begin{equation*}   
\underset{n}{\overline{\text{lim} }}  \| \nabla [(\sum_{j=1}^J F(v_j) - F(\sum_{j=1}^J v_j) ]  \|_{L_{t,x}^\frac{3}{2}} 
 \lesssim_J  \lim_{n \rightarrow \infty}  \underset{n}{\overline{\text{lim} }} \sum_{i \neq j} \| \nabla v_n^j |v_n^i|^{2} \|_{L_{t,x}^\frac{3}{2}} = 0 
\end{equation*}  
settling (\ref{ena}).

\begin{equation*} 
  \| \nabla [F(u_n^J - e^{t \Delta} w_n^J ) - F(u_n^J) ]  \|_{L_{t,x}^\frac{3}{2}}  \lesssim  \| \nabla e^{t \Delta} w_n^J \|_{L_{t,x}^{3} } \|  e^{t \Delta} w_n^J \|_{L_{t,x}^{6}}^{2}  
 +  \| |u_n^J|^{2} \nabla  e^{t \Delta} w_n^J \|_{L_{t,x}^{\frac{3}{2}}} + 
\end{equation*}
\begin{equation*}  
\| \nabla u_n^J  \|_{L_{t,x}^{3}}  \| e^{t \Delta} w_n^J \|_{L_{t,x}^{6}}^{2}  +   \| \nabla u_n^J  \|_{L_{t,x}^{3}}  \|  e^{t \Delta} w_n^J \|_{L_{t,x}^{6}} \| u_n^J \|_{L_{t,x}^{6}}. 
\end{equation*}
The first, third and fourth terms are easily seen to converge to zero (using the space-time estimates, the fact that $w_n^J$ is bounded in $\dot{H}^1$ and (\ref{holygrailH})), so (\ref{dyo}) is reduced to showing
\begin{equation*} 
\lim_{J \rightarrow \infty} \underset{n}{\overline{\text{lim} }} \|  |u_n^J|^{2} \nabla  e^{t \Delta} w_n^J   \|_{L_{t,x}^{\frac{3}{2}}} = 0.
\end{equation*}
By H\"{o}lder and the space-time estimates,
\begin{equation*}  \begin{split}
\||u_n^J|^{2} \nabla  e^{t \Delta} w_n^J \|_{L_{t,x}^{\frac{3}{2}}} &\lesssim \| u_n^J \|_{L_{t,x}^{6}}^{\frac{3}{2}}   \| \nabla  e^{t \Delta} w_n^J \|_{L_{t,x}^{3}}^{\frac{1}{2}}   \| u_n^J \nabla  e^{t \Delta} w_n^J   \|_{L_{t,x}^{2}}^{\frac{1}{2}}\\  &\lesssim \|  (\sum_{j=1}^J v_n^j ) \nabla  e^{t \Delta} w_n^J  \|_{L_{t,x}^{2}}^{\frac{1}{2}} +  \|  e^{t \Delta} w_n^J   \|_{L_{t,x}^{6}}^{\frac{1}{2}}    \|  \nabla  e^{t \Delta} w_n^J  \|_{L_{t,x}^{3}}^{\frac{1}{2}}   \\
&\lesssim  \|  (\sum_{j=1}^J v_n^j ) \nabla  e^{t \Delta} w_n^J \|_{L_{t,x}^{2}}^{\frac{1}{2}} +  \| e^{t \Delta} w_n^J  \|_{L_{t,x}^{6}}^{\frac{1}{2}}. \end{split}
\end{equation*} 
Again due to (\ref{holygrailH}) it suffices to prove
\begin{equation*}  
\lim_{J \rightarrow \infty} \underset{n}{\overline{\text{lim} }}   \|  (\sum_{j=1}^J v_n^j ) \nabla  e^{t \Delta} w_n^J \|_{L_{t,x}^{2}} = 0.  
\end{equation*} 
For any $\eta > 0 $ by summability, we see that there exists $J' = J' (\eta) \geq 1$ such that 
$\ds \sum_{j \geq J'} \| v_n^j \|_{S([0, \infty))} \leq \eta$.
For this $J'$,
\begin{equation*} 
\underset{n}{\overline{\text{lim} }}  \| \left (\sum_{j=J'}^J v_n^j \right ) \nabla  e^{t \Delta} w_n^J \|_{L_{t,x}^{2}}^{6}  \lesssim \underset{n}{\overline{\text{lim} }} \left ( \sum_{j \geq J'} \| v_n^j \|_{S([0, \infty))} \right)  \|  \nabla  e^{t \Delta} w_n^J  \|_{L_{t,x}^{3}}^{6} \lesssim \eta .
\end{equation*}
As $\eta > 0 $ is arbitrary, it suffices to show 
\begin{equation*}  
  \lim_{J \rightarrow \infty} \underset{n}{\overline{\text{lim} }}   \|  v_n^j  \nabla  e^{t \Delta} w_n^J \|_{L_{t,x}^{2}} = 0, \hspace{0.2em} 1 \leq j \leq J' .
\end{equation*}
Changing variables and assuming (by density) 
$v^j  \in C_c^{\infty} (\mathbb{R}^{+} \times \mathbb{R}^4)$, 
by \mbox{H\"{o}lder} and the scale-invariance of the norms, proving (\ref{dyo}) reduces to proving 
\begin{equation*}  
  \lim_{J \rightarrow \infty} \underset{n}{\overline{\text{lim} }}  \|  \nabla  e^{t \Delta} w_n^J \|_{L_{t,x}^{2} (K)} = 0,  
\end{equation*} 
for any compact $ K \in  \mathbb{R}^{+} \times \mathbb{R}^4.$ 
This result is the direct heat analogue of Lemma 2.5 in \cite{KV}.

We have verified all the requirements of the stability proposition (\ref{perturbation}), 
hence we conclude that 
 \begin{equation*}  
   \| u_n \|_{S([0,\infty))} \lesssim 1 + E_c \end{equation*} 
 contradicting (\ref{bad sequence}).

The problem now is that the kinetic energy is not conserved. The difficulty arises from the possibility that the S-norm of several profiles is large over short times, while their kinetic energy does not achieve the critical value until later. To finish the proof of proposition we have to prove that only one profile is responsible for the blow-up.

We can now (after possibly rearranging the indices) assume there exists $ 1 \leq J_1 < J_0$ such that
\begin{equation*}  \| v^j \|_{S(I^j)} = \infty ,  1 \leq j \leq J_1\hspace{0.3em} \text{and} \hspace{0.3em}  \| v^j \|_{S([0,\infty))} < \infty ,   j > J_1 \end{equation*}

Again, we follow the combinatorial argument of \cite{KV}:  for each integer $m,n \geq 1, $ define an integer $j=j(m,n) \in \{1,...,J_1\}$ and an interval $K_n^m$ of the form $[0,\tau]$ by 
\begin{equation} \sup_{1 \leq j \leq J_1} \|v_n^j \|_{S(K_n^m)} =  \|v_n^{j(m,n)} \|_{S(K_n^m)}= m. \label{K-definition} \end{equation}
By the pigeonhole principle, there is a $1 \leq j \leq J_1$ such that for infinitely many $m$ one has $j(m,n) = j_1$ for infinitely many n. Reordering the indices, if necessary, we may assume $j_1 =1.$
By the definition of the critical kinetic energy 
\begin{equation}  
\limsup_{m \rightarrow \infty} \limsup_{n \rightarrow \infty}  \sup_{t \in K_n^m} \| \nabla v^1_n(t)\|^2_2 \geq E_c. \label{one side} 
\end{equation}
    
By (\ref{K-definition}), all $v^j_n$ have finite S-norms on $K_n^m$ for each $m \geq 1.$ In the same way as before, we check again that the assumptions of 
Proposition~\ref{perturbation} are satisfied to conclude that for $J$ and $n$ large enough, $u^J_n$ is a good approximation to $u_n$ on $K_n^m.$ 
In particular we have for each $m \geq 1,$
\begin{equation}  \label{approximate2}
  \lim_{J \rightarrow \infty} \limsup_{n \rightarrow \infty} 
  \| u^J_n - u_n \|_{L^{\infty}_t \dot{H}^1_x (K_n^m  \times \mathbb{R}^4)} = 0. \end{equation}

\begin{lemma}   \label{later times} (Kinetic energy decoupling for later times). 
For all $J \geq 1$ and $m \geq 1,$ 
\begin{equation} \label{later_times} 
  \limsup_{n \rightarrow \infty} \sup_{t \in K_n^m} |  \|\nabla u^J_n(t) \|^2_2 
  - \sum^{J}_{j=1} \| \nabla v^j_n(t) \|^2_2 - \| \nabla w^J_n \|^2_2   |  = 0  
\end{equation}
\end{lemma}

\begin{proof} Fix $J \geq 1$ and $m \geq 1.$ Then, for all $t \in K_n^m,$
\begin{equation*} 
\| \nabla u^J_n (t) \|^2_2 = \; < \nabla u^J_n(t), \nabla u^J_n(t) > \; 
= \sum^{J}_{j=1} \| \nabla v^j_n(t) \|^2_2 + \| \nabla w^J_n \|^2_2 
\end{equation*}
\begin{equation*} 
+ \sum_{j \neq j'} < \nabla v^j_n(t), \nabla v^{j'}_n(t) > + 2 \sum_{j=1}^{J} <\nabla e^{t \Delta} w^J_n, \nabla v^j_n(t)>   .
\end{equation*} 
It suffices to prove (for all sequences $t_n \in K^m_n$) that
\begin{equation} 
  \label{vv} <\nabla v^j_n(t_n), \nabla v^{j'}_n(t_n)> \xrightarrow{n \to \infty} 0 
\end{equation}
and
\begin{equation} \label{wv} 
  <\nabla e^{t_n \Delta} w^J_n, \nabla v^j_n(t_n)> \xrightarrow{n \to \infty} 0 .
\end{equation}
Since $t_n \in K^m_n \subset [0, T^{n,j}_{\text{max}}),$ for all $1 \leq j \leq J_1, $ we have 
$t_{n,j} := \frac{t_n}{(\lambda^j_n)^2} \in I^j$ for all $j \geq 1.$ For $j > J_1$ the lifespan is $\mathbb{R}^{+}.$  
By refining the sequence using the standard diagonalization argument, we can assume that $t_{n,j}$ converges ($+\infty$ is also possible) for every $j.$\\

We deal with (\ref{vv}) first. If both $\ds t_{n,j}, t_{n,j'} \rightarrow \infty,$ necessarily $\ds j,j' > J_1$ and $v^j, v^{j'}$ are global solutions satisfying the kinetic energy bound (\ref{condition}), so by  Theorem (\ref{Decay Theorem}) $\ds \| v^j \|_{\dot{H}^1}, \|v^{j'}\|_{\dot{H}^1} \xrightarrow {t \to \infty} 0.$ Employing H\"older's inequality and the scaling invariance of the $\dot{H}^1$-norm, we get (\ref{vv}) for this case. When $\ds t_{n,j} \rightarrow \infty$ but $\ds t_{n,j'} \rightarrow \tau_{j'}:$ using the continuity of the flow in $\ds \dot{H}^1$ we can, for the limit, replace $\ds \nabla \{ \frac{1}{\lambda^{j'}_n} v^{j'}(t_{n,j'},\frac{x-x_n^j}{\lambda^{j'}_n}) \}$ with $\ds \nabla \{ \frac{1}{\lambda^{j'}_n} v^{j'}(\tau_{j'},\frac{x-x_n^j}{\lambda^{j'}_n}) \}.$ By an $L^2$- approximation, we can also assume we are working with smooth, compactly supported functions. In this case, we can bound $\ds <\nabla v^j_n(t_n), \nabla v^{j'}_n(t_n)>$ by  $\ds \| v^j(t_{n,j}) \|_{\dot{H}^1} \| v^{j'}(\tau_{j'}) \|_{\dot{H}^1} \rightarrow 0,$ as $n \rightarrow \infty.$ The remaining case is when both $ t_{n,j}$ and $t_{n,j'}$ converge to finite $\ds \tau_j, \tau_{j'}$ in the interior of $I^j,I^{j'}$ respectively. We can replace as above $\ds t_{n,j},t_{n,j'}$ by $\tau_j, \tau_{j'}$ respectively, and perform a change of variables: 
\[
  \ds <\nabla v^j_n(t_n), \nabla v^{j'}_n(t_n)> = \int (\frac{\lambda^{j}_n}{\lambda^{j'}_n} )^2  \nabla v^j(\tau_j,x), \nabla v^{j'}(\tau_{j'}, \frac{\lambda^{j}_n}{\lambda^{j'}_n}x + \frac{x_n^j-x_n^{j'}}{\lambda^{j'}_n}) dx
\] 
which is going to zero assuming, without loss of generality that $\ds \frac{\lambda^{j}_n}{\lambda^{j'}_n} \rightarrow 0$ and the functions in the integrand 
are compactly supported, thus concluding the case (\ref{vv}).

For the case (\ref{wv}), perform a change of variable: 
\[
  \ds <\nabla e^{t_n \Delta} w^J_n, \nabla v^j_n(t_n)>
  =<\nabla e^{t_{n,j} \Delta} [\lambda^{j}_n w^J_n(\lambda^{j}_n x+x_n^{j})] , \nabla v^j(t_{n,j}) >.
\]
When $t_{n,j} \rightarrow \infty,$ using H\"older, the heat estimates (\ref{STdecayheat}) (and the boundedness of $w^J_n$ in $\dot{H}^1$ coming from the profile decomposition) and 
Theorem~\ref{Decay Theorem} as before, we get to the result.
For the case $\ds t_{n,j} \rightarrow \tau_j < + \infty,$ we can, as before, replace $t_{n,j}$ by its limit $\tau_j$ in the integral $\ds \int \nabla e^{t_{n,j} \Delta} [\lambda^{j}_n w^J_n(\lambda^{j}_n x+x_n^{j})] \cdot \nabla v^j(\tau_{j}, x) dx.$ 
Using (\ref{holygrailH}) and (\ref{STdecayheat}) we can see that $\ds e^{t_{n,j} \Delta} [\lambda^{j}_n w^J_n(\lambda^{j}_n x+x_n^{j})] \rightharpoonup 0$ in $\dot{H}^1$, 
which concludes the proof of the case (\ref{wv}) and hence the proof of the Lemma. 
\end{proof}

By (\ref{bad sequence}), (\ref{approximate2}), (\ref{later_times}), we get
\begin{equation*} \begin{split}
  E_c &\geq  \limsup_{n \rightarrow \infty} \sup_{t \in K_n^m} \|\nabla u_n(t) \|^2_{L^2} \\ &= \lim_{J \rightarrow \infty} \limsup_{n \rightarrow \infty} \{ \|\nabla w^J_n(t) \|^2_{L^2} +  \sup_{t \in K_n^m} \sum^J_{j=1} \|\nabla v^j_n(t) \|^2_{L^2} \}. \end{split}
\end{equation*}

Taking a limit in $m$ and employing (\ref{one side}), we see that we actually have equality everywhere. This implies that $J_1 =1, v^j_n \equiv 0, \forall j \geq 2, w_n := w^1_n \xrightarrow{ \dot{H}^1} 0.$ So $u_n(0,x) = \frac{1}{\lambda_n} \phi(\frac{x-x^1_n}{\lambda^1_n}) + w_n(x), $ for some functions $\phi, w_n \in \dot{H}^1, w_n \xrightarrow{s} 0 \hspace{0.2em} \text{in}\hspace{0.2em} \dot{H}^1.$

Thus we have shown that for the sequence of initial data 
$u_{n,0}$ that 
\[
  \lambda^1_n u_{n,0} (\lambda^1_n x+x_n^{1}) \xrightarrow{\dot{H}^1} \phi.
\] 
This finishes the proof of Proposition \ref{Palais-Smale}.
\end{proof}

Now, we are in a position to prove Theorem~\ref{critical thm}.
\begin{proof} 
By the definition of $E_c$ we can find a sequence of solutions $u_n : I_n \times \mathbb{R}^4 \rightarrow \mathbb{R},$ with $I_n$ compact, so that \begin{equation*} \sup_n \sup_{t \in I_n} \|\nabla u_n(t) \|^2_{L^2} = E_c
\hspace{0.3em} \text{and} \hspace{0.3em} \lim_n \|u_n\|_{S(I_n)} = + \infty . \end{equation*} 
An application of Proposition~\ref{Palais-Smale} shows that the corresponding 
sequence of initial data converges strongly, modulo symmetries, to some 
$\phi \in  \dot{H}^1$.
By rescaling and translating $u_n$, we may in fact assume 
$u_{n,0} := u_n(0,\cdot) \xrightarrow{\dot{H}^1} \phi$.

Let $u_c : I \times \mathbb{R}^4 \rightarrow \mathbb{R}$ be the maximal-lifespan solution with initial data $\phi.$
Since $u_{n,0} \xrightarrow{\dot{H}^1} \phi,$ employing the stability Proposition \ref{perturbation}, $I \subset \liminf I_n,$
and $\|u_n - u_c \|_{L^{\infty}_t \dot{H}^1_x (K \times \mathbb{R}^4)} \xrightarrow{n \rightarrow \infty} 0, $ for all compact $K \subset I.$
Thus, by (\ref{bad sequence}): \begin{equation} \sup_{t \in I} \| \nabla u_c(t) \|^2_{L^2} \leq E_c. \end{equation}
Applying the stability Proposition~\ref{perturbation} once again we can also see that 
$\| u_c \|_{S(I)} = \infty$. 
Hence, by the definition of the critical kinetic energy level, $E_c,$ 
\begin{equation} 
  \sup_{t \in I} \| \nabla u_c(t) \|^2_{L^2} \geq E_c .
\end{equation}
In conclusion, 
\begin{equation} 
  \sup_{t \in I} \| \nabla u_c(t) \|^2_{L^2} = E_c  \label{=Ec}
\end{equation}
and 
\begin{equation}
  \|u_c\|_{S(I)} = +\infty. \label{U-BU}
\end{equation}
Finally, the compactness modulo symmetries~\eqref{precompact}
follows from another application of Proposition \ref{Palais-Smale}. 
We omit the standard proof (see for example \cite{KMa} or \cite{KV}).
\end{proof}

\section{Rigidity} \label{Rigidity Section}

The main result of this section is the following theorem ruling out finite-time blowup of compact (modulo symmetries) solutions. Note this is a considerably stronger
statement than we require,
since it is not limited to solutions with below-threshold kinetic energy:
\begin{theorem} \label{Rigidity Theorem} 
If $u$ is a solution to~\eqref{CP} on maximal existence interval $I = [0,T^*)$, 
such that $\ds K:= \left\{ \frac{1}{\lambda(t)} u(t, \frac{x-x(t)}{\lambda(t)}) \; | \;  t \in I \right\}$
is precompact in $\dot{H}^1$ for some $\ds x(t) \in \mathbb{R}^4$, $\lambda(t) \in \mathbb{R}^+$, 
then $T^* = + \infty$. 
\end{theorem}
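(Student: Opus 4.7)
The strategy is by contradiction: assume $T^* < \infty$. By the blow-up criterion of Theorem~\ref{lwp}, $\|u\|_{S([0,T^*))} = +\infty$, and combined with the compactness of $K$ in $\dot{H}^1$ together with the lower-semicontinuity of $T_{\max}$, a standard argument forces $\lambda(t) \to \infty$ as $t \to T^*$: otherwise some subsequence $u(t_n)$ itself would converge strongly in $\dot{H}^1$, and the local theory would extend $u$ past $T^*$, contradicting maximality. In particular, if $u \equiv 0$ the theorem is trivial, so I may assume $u \not\equiv 0$ throughout.

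The first substantive step is to bound the spatial center $x(t)$ as $t \to T^*$. By the energy dissipation identity~\eqref{energy dissipation} one has $\int_0^{T^*}\!\!\int_{\R^4} |u_t|^2\, dx\, dt \leq E(u_0) < \infty$, and precompactness of $K$ together with $\lambda(t) \to \infty$ ensures that for every $\eta > 0$ there is an $R_\eta > 0$ with
\[
  \int_{|x - x(t)| > R_\eta/\lambda(t)} |\nabla u(t,x)|^2\, dx \leq \eta \quad \text{for all } t \in [0,T^*).
\]
Testing the equation against a cutoff of the form $\chi(x - y)$ with fixed $y$, a localized-energy argument shows that if $|x(t_n)| \to \infty$ along some sequence, then the $L^2_{t,x}$ norm of $u_t$ on $[0,T^*)$ would blow up, contradicting the dissipation bound. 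After translating, I may assume $x(t) \to x_0 \in \R^4$, from which the compactness yields the kinetic-energy concentration
\[
  \int_{|x - x_0| > \delta} |\nabla u(t,x)|^2\, dx \xrightarrow{t \to T^*} 0 \quad \text{for every } \delta > 0.
\]

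Next, a local parabolic small-energy ($\e$-regularity) estimate, applied on every cylinder $\{|x-x_0| > \delta\} \times [T^*-\tau,T^*]$ with $\tau = \tau(\delta)$ small enough, implies that $u$ is uniformly smooth and bounded on that cylinder, so it extends smoothly past $T^*$ there, and the concentration above gives $u(T^*, x) = 0$ on $\{|x-x_0| > \delta\}$. Consequently, on the exterior spacetime region $\{|x - x_0| > R\} \times [T^*-\tau, T^*]$, $u$ satisfies a linear parabolic inequality $|u_t - \Delta u| \leq |V|\, |u|$ with bounded potential $V = |u|^2$ and vanishes at the final time, so the backwards uniqueness theorem of~\cite{ESS1} forces $u \equiv 0$ on this entire exterior cylinder. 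The space-like unique continuation result of~\cite{ESS2}, applied at a fixed time slice $t_0 < T^*$, then propagates the vanishing from an exterior half-space inward to all of $\R^4$, yielding $u(t_0,\cdot) \equiv 0$ and contradicting $u \not\equiv 0$.

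The main obstacle will be the boundedness of $x(t)$: the dissipation/compactness interplay sketched above, though classical in spirit, requires a careful truncation and localized-energy computation to convert the $L^2_{t,x}$ control on $u_t$ into a quantitative restriction on the motion of the concentration center. Beyond that, the key technical points are the adaptation of the $\e$-regularity criterion to our focusing semilinear heat equation (ensuring in particular that $V = |u|^2$ is genuinely bounded on the exterior cylinder, so that~\cite{ESS1,ESS2} apply directly), with the overall scheme then paralleling the Kenig-Koch implementation for Navier-Stokes~\cite{KK}.
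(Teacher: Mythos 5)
Your overall architecture matches the paper's: contradiction from $T^*<\infty$, $\lambda(t)\to\infty$ via lifespan-scaling, boundedness of $x(t)$, exterior $\e$-regularity, vanishing at $t=T^*$ outside a ball, then backwards uniqueness of \cite{ESS1} and unique continuation of \cite{ESS2} to force $u\equiv 0$, contradicting the blow-up criterion. The endgame (regularity through backwards uniqueness) is essentially the paper's proof.

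There is, however, a genuine gap in the step you yourself flag as the main obstacle: the boundedness of $x(t)$. Your sketch says that testing against a cutoff shows that $|x(t_n)|\to\infty$ would force $\|u_t\|_{L^2_{t,x}}$ to blow up. That is not how the contradiction works, and as stated it cannot work. The actual mechanism (and the paper's) is that the \emph{exterior localized energy} $\int(\tfrac12|\nabla u|^2-\tfrac14|u|^4)\psi_{R_0}\,dx$ must rise from at most $\tfrac14\underline{E}$ at some $t_0$ to at least $\tfrac12\underline{E}$ at some later $t_1$ (because the whole solution concentrates near $x(t_n)\to\infty$), while its rate of change is controlled by $\|u_t\|_{L^2_{t,x}([t_0,T^*))}\to 0$; here $\underline{E}:=\inf_{t}E(u(t))$. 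This yields a contradiction \emph{only if} $\underline{E}>0$: otherwise there is no definite amount of energy that must transit to spatial infinity, and the finite dissipation budget imposes no restriction. Since the theorem is stated for arbitrary compact solutions — \emph{not} only below-threshold ones, where positivity of the energy is automatic from the variational lemma — establishing $\underline{E}>0$ is a substantial, separate argument in the paper: one first proves the local $L^2$-vanishing $\int_{|x|\le R}|u(t_n)|^2\,dx\to 0$, deduces from a truncated virial identity that $u(t)\in L^2$ with $\|u(t)\|_{L^2}^2\le C(T^*-t)$, and then rules out $\underline{E}\le 0$ by showing it would make $K(u(t))=\int(|\nabla u|^2-|u|^4)$ strictly negative near $T^*$, hence $\tfrac{d}{dt}\|u(t)\|_{L^2}^2>0$, contradicting the decay of the $L^2$ norm. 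Your proposal neither identifies the need for $\underline{E}>0$ nor supplies any of this; without it the localized-energy argument does not close. (A smaller related slip: your bound $\int_0^{T^*}\!\!\int|u_t|^2\le E(u_0)$ already presumes $E(u(t))\ge 0$; the finiteness should instead be extracted from the uniform $\dot H^1\cap L^4$ bounds coming from compactness.)
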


As a corollary, we can complete the proof of the main result Theorem~\ref{Main Theorem} by showing:
\begin{corollary} \label{Global Theorem}
For any solution satisfying~\eqref{result finite}, $T_{max}(u(0)) = \infty$.
\end{corollary}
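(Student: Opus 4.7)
The plan is a standard concentration-compactness plus rigidity assembly: show that $E_c = \|\nabla W\|_{L^2}^2$ by contradiction, using the critical element from Theorem~\ref{critical thm} together with the Rigidity Theorem~\ref{Rigidity Theorem} and the Decay Theorem~\ref{Decay Theorem}.

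First I would suppose for contradiction that $E_c < \|\nabla W\|_{L^2}^2$. Then Theorem~\ref{critical thm} supplies a maximal-lifespan solution $u_c : I \times \R^4 \to \C$ with $\sup_{t \in I} \|\nabla u_c(t)\|_{L^2}^2 = E_c$, $\|u_c\|_{S(I)} = +\infty$, and precompact orbit modulo the natural scaling and translation symmetries, i.e.\ the set $K$ of~\eqref{precompact} is precompact in $\dot{H}^1$.

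Next I would feed $u_c$ into the Rigidity Theorem~\ref{Rigidity Theorem}, whose hypothesis requires only the precompactness of $K$ (and crucially not any below-threshold assumption). This immediately yields $T^*_c = +\infty$, so $u_c$ is global. Because $\sup_{t \geq 0} \|\nabla u_c(t)\|_{L^2} = \sqrt{E_c} < \|\nabla W\|_{L^2}$, the hypothesis~\eqref{condition} of Theorem~\ref{Decay Theorem} is satisfied, and that theorem gives $S_{\R^+}(u_c) < \infty$, in direct contradiction with $\|u_c\|_{S(I)} = +\infty$.

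Therefore $E_c = \|\nabla W\|_{L^2}^2$. Now let $u$ be any solution with $\sup_{t \in I}\|\nabla u(t)\|_{L^2} < \|\nabla W\|_{L^2}$; then $\sup_{t \in I}\|\nabla u(t)\|_{L^2}^2 < E_c$, so by the very definition of $E_c$ we have $\|u\|_{S(I)} \leq L\bigl(\sup_{t\in I}\|\nabla u(t)\|_{L^2}^2\bigr) < \infty$. Combined with the blow-up criterion of Theorem~\ref{lwp}(3), this forces $T_{max}(u(0)) = +\infty$, completing the proof. There is no genuine obstacle here: the heavy lifting has been done in the Decay Theorem (which prevents the critical element from being global) and the Rigidity Theorem (which prevents finite-time blow-up of a compact solution), so this corollary is essentially a bookkeeping step that pins down $E_c$ at the maximal value $\|\nabla W\|_{L^2}^2$.
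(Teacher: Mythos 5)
Your argument is correct and coincides with the paper's own proof: apply Theorem~\ref{Rigidity Theorem} to the critical element of Theorem~\ref{critical thm} to make it global, use Theorem~\ref{Decay Theorem} to force $E_c = \|\nabla W\|_{L^2}^2$, and then conclude via the definition of $E_c$ and the blow-up criterion. The paper states this more tersely but the logic is identical.
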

\begin{proof}
By Theorem~\ref{Rigidity Theorem}, the solution $u_c$ produced by Theorem~\ref{critical thm} must be global:
$\ds T_{max}(u_c(0)) = \infty$. But since $\ds \| u_c \|_{S(\R^+)} = \infty$, 
Theorem~\ref{Decay Theorem} shows $E_c = \| \nabla W \|_2^2$, and the Corollary follows.
\end{proof}

The rest of the section is devoted to the proof of the Theorem~\ref{Rigidity Theorem}.
Our proof is inspired by the work of Kenig and Koch \cite{KK} for the Navier-Stokes 
system, and it's based on classical parabolic tools -- local smallness regularity, backwards uniqueness, and unique continuation -- though implemented in a somewhat different way. 
In particular, we will make use of the following two results, proved in 
\cite{ESS1}, \cite{ESS2} (also see \cite{ESS3}):

\begin{theorem} (Backwards Uniqueness) \label{Backwards} 
Fix any $R, \delta, M,$ and $ c_0 > 0.$ 
Let $Q_{R,\delta} := (\mathbb{R}^4\setminus B_R(0)) \times (-\delta,0),$ 
and suppose a vector-valued function $v$ and its distributional derivatives satisfy 
$v, \nabla v, \nabla^2 v \in L^2(\Omega)$ for any bounded subset 
$\Omega \subset Q_{R,\delta}$, 
$|v(x,t)| \leq e^{M|x|^2}$ for all $(x,t) \in Q_{R,\delta}$, 
$|v_t- \Delta v| \leq c_0 ( |\nabla v| + |v|)$ on $Q_{R,\delta}$, 
and $v(x,0)=0$ for all $x \in \mathbb{R}^4\setminus B_R(0)$. 
Then $ v \equiv 0 $ in $Q_{R,\delta}.$
\end{theorem}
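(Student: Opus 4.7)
The plan is to follow the standard Carleman-estimate approach of Escauriaza-Seregin-Sverak: combine two weighted $L^2$ inequalities for the backward heat operator $P = \partial_t - \Delta$ to force $v$ to vanish on successively larger sets.

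First I would reduce to a compactly supported setup in space. Pick a smooth cutoff $\chi(x)$ vanishing on $B_R(0)$ and equal to $1$ on $\mathbb{R}^4 \setminus B_{R+1}(0)$, and set $w = \chi v$. Then $w$ satisfies $|Pw| \leq c_0(|\nabla w| + |w|) + E$, where the commutator error $E$ is supported in the annulus $\{R \leq |x| \leq R+1\} \times (-\delta,0)$ and is controlled pointwise by $|v| + |\nabla v|$. The growth hypothesis $|v(x,t)| \leq e^{M|x|^2}$ is what will make weighted integrals of $w$ and $E$ finite.

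Second, I would establish the main Carleman estimate: for a weight $\phi_a(x,t) = \frac{|x|^2}{8(a-t)} + \text{lower order}$ (with $a > 0$ a large parameter) and any sufficiently large $\alpha$, there is $C = C(c_0)$ such that for $w$ vanishing near $t=0$,
\begin{equation*}
  \alpha \int_{-\delta}^0 \!\! \int (a-t)^{-2\alpha-1} e^{2\phi_a}\bigl(|w|^2 + (a-t)|\nabla w|^2\bigr)\, dx\, dt \leq C \int_{-\delta}^0\!\!\int (a-t)^{-2\alpha} e^{2\phi_a} |Pw|^2\, dx\, dt.
\end{equation*}
The quadratic-in-$|x|$ weight is precisely matched to the $e^{M|x|^2}$ hypothesis: for $a$ small enough (depending on $M$), the weighted norms of $v$ and of the error $E$ on the annulus are finite. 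Taking $\alpha$ large absorbs the lower-order contribution $c_0(|\nabla w|+|w|)$ from $|Pw|$ into the left-hand side.

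Third, I would use a second, logarithmic-convexity-type Carleman inequality near the terminal time $t=0$ to exploit $v(\cdot,0)=0$ outside $B_R$. This gives, for weighted norms $N(t) = \|w(t)\|_{L^2(e^{\phi})}$, an estimate of the form $N(t) \leq C N(-\delta)^{\theta(t)} N(0)^{1-\theta(t)}$, which, combined with $N(0)=0$ on the exterior, forces $N(t)$ to vanish on a small backward time slab $(-\delta',0)$. Feeding this vanishing back into the first Carleman estimate eliminates the annular error $E$ (whose contribution, weighted by $e^{2\phi_a}$ with $a$ small, is a fixed constant, while the left-hand side grows like $\alpha$). Sending $\alpha \to \infty$ concludes $w \equiv 0$ on $(\mathbb{R}^4 \setminus B_{R+2}(0)) \times (-\delta',0)$. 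Iterating the argument by stepping backward in time in increments of $\delta'$ fills out all of $Q_{R,\delta}$.

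The main obstacle is of course the proof of the two Carleman estimates themselves. The key technical point is choosing the weight $\phi_a$ so that the commutator $[P, e^{\phi_a}]$ yields a positive, coercive quadratic form in $(w, \nabla w)$ after integration by parts — the so-called pseudoconvexity condition for the heat operator. The quadratic growth of $\phi_a$ in $|x|$ is rigid: it must be fast enough to control first-order terms when $\alpha$ is large, yet slow enough that the $e^{M|x|^2}$ bound keeps everything integrable, and this delicate balance is what makes the Escauriaza-Seregin-Sverak argument subtle.
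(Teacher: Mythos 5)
The paper does not prove this theorem at all: it is imported verbatim from Escauriaza--Seregin--\v{S}ver\'ak \cite{ESS1,ESS2} (see also \cite{ESS3}), and the authors explicitly say so just before stating it. So there is no internal proof to compare against; your attempt has to stand on its own as a reconstruction of the ESS argument.

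As such, it has a genuine gap: the two Carleman inequalities are the entire mathematical content of the theorem, and you assert them rather than prove them. Everything else in your outline (cutoff reduction, absorbing the lower-order terms $c_0(|\nabla w|+|w|)$ for large $\alpha$, log-convexity near $t=0$, iteration backward in time in steps of size $\delta'$ depending only on $M$ and $c_0$) is standard bookkeeping once those estimates are in hand; a referee would not accept ``establish the main Carleman estimate'' as a step. Beyond that, at least one quantitative claim is wrong as stated: with the weight $\phi_a=\frac{|x|^2}{8(a-t)}>0$, the integrand $e^{2\phi_a}|v|^2$ is bounded below by $e^{(\frac{1}{4(a-t)}+2M^{-1}\cdot 0)|x|^2}\cdot$(something nonnegative) only if $|v|$ \emph{decays}; under the hypothesis $|v|\le e^{M|x|^2}$ the product $e^{|x|^2/(4(a-t))}e^{2M|x|^2}$ is never integrable over the exterior domain, for any choice of $a$, so ``for $a$ small enough the weighted norms of $v$ are finite'' cannot be right. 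In ESS the first Carleman inequality carries a \emph{decaying} Gaussian weight of the form $e^{-|x|^2/(8t)}$ (after reversing time), its applicability requires the time slab to be short compared with $1/M$, and its role is precisely to upgrade the a priori growth $e^{M|x|^2}$ to Gaussian \emph{decay} on a smaller slab; only then can the second, spatially growing weight be brought in. Your sketch conflates these two stages, and without that preliminary decay-improvement step the justification of applying the estimates to a non-compactly-supported $w$ (via cutoffs at spatial infinity) breaks down. If your intent is to match what the paper does, the correct move is simply to cite \cite{ESS1,ESS2}; if you want a self-contained proof, the two weighted inequalities and the order in which they are deployed must be written out.
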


\begin{theorem} (Unique Continuation)\label{unique_c} 
Let $Q_{r,\delta} :=  B_r(0) \times (-\delta,0),$ for some $r, \delta > 0,$  and suppose a vector-valued function $v$ and its distributional derivatives satisfy 
$v, \nabla v, \nabla^2 v \in L^2(Q_{r,\delta})$ and there exist $c_0, C_k > 0, (k \in \mathbb{N})$ such that $|v_t - \Delta v | \leq c_0  ( |\nabla v| + |v|)$ a.e. on $Q_{r,\delta}$ and 
$ |v(x,t)| \leq C_k (|x| + \sqrt{-t})^k$ for all $(x,t) \in Q_{r,\delta}.$ 
Then $v(x,0) \equiv 0$ for all $x \in B_r(0).$
\end{theorem}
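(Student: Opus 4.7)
The approach is classical: derive the result from a parabolic Carleman estimate centered at the space-time point $(0,0)$, following the Escauriaza--Seregin--Sverak scheme. The plan is to introduce a one-parameter family of Carleman weights $e^{\Phi_\alpha}$ concentrated at $(0,0)$, a representative choice being
\[
  \Phi_\alpha(x,t) := -\frac{|x|^2}{4(-t)} - \alpha \log(-t), \qquad -\delta < t < 0,
\]
so that $e^{\Phi_\alpha}$ grows without bound as $t\to 0^-$ along $\{|x|\lesssim\sqrt{-t}\}$. The target is a Carleman inequality of the schematic form
\[
  \alpha \iint e^{2\Phi_\alpha}\!\left( \frac{|w|^2}{(-t)^2} + \frac{|\nabla w|^2}{(-t)}\right) dx\, dt \;\lesssim\; \iint e^{2\Phi_\alpha}\,|w_t - \Delta w|^2\,dx\,dt,
\]
valid for all smooth $w$ compactly supported in the interior of $Q_{r,\delta}$ and all $\alpha$ sufficiently large. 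I would prove it by the standard H\"ormander-style procedure: conjugate $\partial_t - \Delta$ by $e^{-\Phi_\alpha}$, decompose the conjugated operator into its symmetric and anti-symmetric parts $S+A$, expand $\|(S+A)w\|_{L^2}^2 = \|Sw\|^2 + \|Aw\|^2 + \langle [S,A]w, w\rangle$, and show that $[S,A]$ is a positive bilinear form of order $\alpha$, driven by the convexity of $-\log(-t)$.

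With the Carleman inequality in hand, apply it to $w = \eta v$ where $\eta$ is a smooth cutoff equal to $1$ on $Q_{r/2,\delta/2}$ and supported in $Q_{r,\delta}$. On $\{\eta = 1\}$, the differential inequality $|v_t - \Delta v| \leq c_0(|\nabla v|+|v|)$ controls the principal right-hand side contribution by $c_0^2 \iint e^{2\Phi_\alpha}\eta^2(|v|^2+|\nabla v|^2)\,dx\,dt$, which is absorbed into the left-hand side once $\alpha$ is chosen large compared to a constant multiple of $c_0^2$. The remaining right-hand side terms come from commutators with $\eta$, supported in $\{\nabla \eta \neq 0\}\cup\{\partial_t \eta \neq 0\}$, a region bounded away from $(0,0)$ where $e^{\Phi_\alpha}$ is uniformly bounded in $\alpha$; they contribute a constant $C$ independent of $\alpha$. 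The net conclusion is a one-parameter bound
\[
  \alpha \iint_{Q_{r/4,\delta/4}} e^{2\Phi_\alpha}\,(-t)^{-2}\,|v|^2\,dx\,dt \;\leq\; C.
\]

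The infinite-order vanishing hypothesis enters in two ways. First, it ensures the left-hand side is finite despite $e^{2\Phi_\alpha}(-t)^{-2}$ being unbounded near $(0,0)$: the bound $|v(x,t)|\leq C_k(|x|+\sqrt{-t})^k$, chosen with $k$ large relative to $\alpha$, dominates the singularity of the weight in the parabolic region $|x|\lesssim\sqrt{-t}$. Second, suppose $v$ does not vanish on any cylinder $Q_{\rho,\rho^2}$ about the origin; then there is $(x_1,t_1)$ arbitrarily close to $(0,0)$ with $|v|^2\geq c_1 > 0$ on a small neighborhood of $(x_1,t_1)$, and for such a fixed point $\Phi_\alpha(x_1,t_1)\to+\infty$ linearly in $\alpha$, forcing the left-hand side to grow exponentially in $\alpha$ and contradicting the uniform bound $C$. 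Hence $v\equiv 0$ on some cylinder $Q_{\rho_0,\rho_0^2}$ about $(0,0)$, after which classical parabolic unique continuation for differential inequalities, propagating the zero set through the connected domain $Q_{r,\delta}$, extends the vanishing to all of $Q_{r,\delta}$, whence $v(\cdot,0)\equiv 0$ on $B_r(0)$ as a trace.

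The principal obstacle is establishing the Carleman inequality itself with the advertised positive commutator structure: calibrating $\Phi_\alpha$ so that $[S,A]$ produces a genuinely positive bilinear form of magnitude $\alpha$, uniformly in $r$ and $\delta$ and large enough to absorb $c_0$-dependent lower-order terms, is the delicate heart of the ESS approach and the main technical content of \cite{ESS1,ESS2}. Secondary technical points include a mollification argument to justify applying the inequality to $\eta v$ given only the low regularity $v,\nabla v,\nabla^2 v\in L^2$, and a careful choice of cutoff $\eta$ ensuring its commutator with $\partial_t-\Delta$ is supported where the Carleman weight is uniformly bounded in $\alpha$; this constrains $\eta$'s geometry relative to the level sets of $\Phi_\alpha$.
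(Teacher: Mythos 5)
First, a point of reference: the paper does not prove this theorem at all --- it is quoted from Escauriaza--Seregin--\v{S}ver\'ak \cite{ESS1,ESS2} and used as a black box --- so your proposal can only be measured against the argument in those references. Your overall architecture does match theirs: the weight $e^{\Phi_\alpha}$ with $\Phi_\alpha=-\tfrac{|x|^2}{4(-t)}-\alpha\log(-t)$ is the right one, the symmetric/antisymmetric positive-commutator derivation is the right mechanism, and you honestly defer that computation (the real technical content) to \cite{ESS1,ESS2}.

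The genuine gap is in how you extract the conclusion from the Carleman inequality. (i) The cutoff error terms are \emph{not} bounded by a constant independent of $\alpha$: on the spatial transition region $\{|x|\sim r\}$ one has $\sup_{-\delta<t<0}e^{2\Phi_\alpha}=\sup_{s>0}s^{-2\alpha}e^{-|x|^2/(2s)}\sim (C\alpha/r^2)^{2\alpha}e^{-2\alpha}$, attained at $s\sim r^2/\alpha$, which grows super-exponentially in $\alpha$. (ii) Consequently your contradiction at a \emph{fixed} interior point $(x_1,t_1)$ does not close: there $e^{2\Phi_\alpha(x_1,t_1)}=(-t_1)^{-2\alpha}e^{-|x_1|^2/(2(-t_1))}$ grows only exponentially in $\alpha$ with the fixed base $(-t_1)^{-1}$, so it is dominated by, rather than dominating, the error term. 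The correct extraction fixes $x_0$ with $|x_0|=\rho<r$ and evaluates the left-hand side on a neighborhood of the $\alpha$-dependent point $(x_0,t_\alpha)$ with $-t_\alpha\sim\rho^2/\alpha$, where the weight is $\sim(C\alpha/\rho^2)^{2\alpha}e^{-2\alpha}$; the ratio to the error is $(r/\rho)^{4\alpha}\to\infty$, and since these neighborhoods shrink onto $(x_0,0)$ one concludes $v(x_0,0)=0$ for \emph{every} $|x_0|<r$ in one stroke, using the spatial reach of the Gaussian weight rather than a propagation argument. (iii) Relatedly, your intermediate claim that $v\equiv0$ on an open space-time cylinder $Q_{\rho_0,\rho_0^2}$ is stronger than what the hypotheses yield --- infinite-order vanishing at a single parabolic point gives only spacelike vanishing on the slice $t=0$, which is precisely the stated conclusion --- so the final ``propagate the zero set through $Q_{r,\delta}$'' step is both unobtainable by this method and unnecessary.
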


 As well, we establish the following:
\begin{lemma} (Local Smallness Regularity Criterion) \label{regularity}
For any $k \in \mathbb{N}$, there are $\epsilon_0 > 0$ and $C$ such that: 
if $u$ is a solution of equation (\ref{CP}) on $Q_1$, 
where $Q_r := B_r(0) \times (-r^2,0)$ for $r>0,$ and  satisfies  
\begin{equation*}  
  \epsilon := \|u\|_{L_t^{\infty} (\dot{H}_x^1 \cap L_x^4) (Q_1)} < \epsilon_0 
\end{equation*}
then $u$ is smooth on $\overline{Q_{\frac{1}{2}}}$ with bounds
\begin{equation*}
  \max_{\overline{Q_{\frac{1}{2}}}} | D^k u | \leq C \epsilon. 
\end{equation*}
\end{lemma}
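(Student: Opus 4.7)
The plan is a standard parabolic bootstrap on a chain of shrinking cylinders: at each stage we localize $u$ by a space-time cutoff, note that the localization satisfies a forced heat equation on all of $\R^4$ with zero data on the parabolic boundary, and apply the heat estimates~\eqref{STdecayheat} to gain integrability. The smallness $\epsilon < \epsilon_0$ is used to absorb the energy-critical nonlinearity at each step.

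Concretely, fix a decreasing sequence of radii $1 = r_0 > r_1 > r_2 > \cdots \searrow 1/2$ and smooth cutoffs $\eta_j(x,t)$ supported in $Q_{r_{j-1}}$, identically $1$ on $Q_{r_j}$, and vanishing near the parabolic boundary of $Q_{r_{j-1}}$. Setting $v_j := \eta_j u$ and extending by zero, we have $v_j(\cdot, -r_{j-1}^2) = 0$ and
$$ (\partial_t - \Delta) v_j = \eta_j\, u^3 \;-\; 2\nabla \eta_j \cdot \nabla u \;-\; (\Delta \eta_j)\, u \;+\; (\partial_t \eta_j)\, u. $$
Apply the Duhamel formula on $\R^4 \times (-r_{j-1}^2, 0)$ with the mixed-norm heat estimates~\eqref{STdecayheat}. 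The commutator terms are linear in $u$ and supported in the ``collar'' where previous-stage control is available, so they contribute a quantity linear in $\epsilon$. For the nonlinear term, H\"older gives $\|\eta_j u^3\|_{L^{\tilde q'}_t L^{\tilde p'}_x} \lesssim \|u\|^2_{L^\infty_t L^4_x} \|u\|_{L^q_t L^p_x}$ for suitable admissible exponents, and since $\|u\|_{L^\infty_t L^4_x} \leq \epsilon \leq \epsilon_0$, the factor $\epsilon_0^2$ in front of the unknown allows absorption into the left-hand side. This produces, on $Q_{r_j}$, control of $u$ in a strictly better (sub-critical) mixed-Lebesgue norm by a constant times $\epsilon$.

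Starting from $u \in L^\infty_t(\dot H^1_x \cap L^4_x)(Q_1)$, iterating this step finitely many times pushes $u$ into $L^\infty_{t,x}$ on a slightly shrunken cylinder. At that point $u^3$ is bounded, so standard parabolic $L^p$ and Schauder theory applied to the localized equation on still further shrinking cylinders yields $u \in C^\infty(\overline{Q_{1/2}})$, and each additional derivative costs only a smoothing step on the linearized forcing. The bound $|D^k u| \leq C \epsilon$ is preserved throughout because at every stage the forcing is either linear in $u$ (commutator terms) or cubic but carrying an absorbed factor $\epsilon_0^2$, leaving a single power of $\epsilon$ outside.

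The main obstacle is the \emph{first} step off the critical space $L^\infty_t(\dot H^1_x \cap L^4_x)$: because the nonlinearity $u^3$ is exactly energy-critical, a naive Duhamel bound on $\eta u^3$ offers no gain in regularity unless we genuinely exploit smallness by peeling off two factors in the invariant norm and leaving one factor in whatever space we are trying to bootstrap. Hence the smallness $\epsilon_0$ is essential at the first iteration; after one sub-critical gain, the remaining bootstrap is routine parabolic regularity and the only care required is to keep the linear dependence on $\epsilon$ through each step.
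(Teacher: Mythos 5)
Your overall architecture (shrinking cylinders, cutoffs, commutator terms controlled by the previous stage, smallness used to absorb the cubic term) matches the spirit of the paper's proof, but the crucial first step fails as written. You propose to close the estimate by writing $\eta u^3=(\eta u)\,u^2$, peeling off $\|u\|_{L^\infty_t L^4_x}^2\le\epsilon^2$, and applying the inhomogeneous heat estimate to recover $\eta u$ in the bootstrap norm $L^q_tL^p_x$. But peeling off two factors in $L^\infty_t L^4_x$ forces the source to lie in $L^{q}_tL^{r}_x$ with $\tfrac1r=\tfrac1p+\tfrac12$ and the \emph{same} time exponent as the target, so the required estimate is $\bigl\|\int_0^t e^{(t-s)\Delta}f\,ds\bigr\|_{L^q_tL^p_x}\lesssim\|f\|_{L^q_tL^r_x}$ with time kernel $(t-s)^{-2(\frac1r-\frac1p)}=(t-s)^{-1}$. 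This is not locally integrable, the convolution is unbounded from $L^q_t$ to $L^q_t$ for every $q$, and this double-endpoint case is precisely the one excluded from~\eqref{STdecayheat}. Lowering the time exponent of the peeled factors (legitimate on the bounded interval) does not help, because the spatial H\"older cost $\tfrac1r-\tfrac1p=\tfrac12$ is fixed by the $L^4_x$ integrability, which is all that $\dot H^1(\R^4)$ provides. So no absorption closes and no subcritical gain is obtained: smallness of $\|u\|_{L^\infty_t L^4_x}$ is exactly borderline for a naive Duhamel bootstrap (the parabolic analogue of the $L^\infty_tL^3_x$ condition for Navier--Stokes), and your own flagged ``first step off the critical space'' is where the argument breaks.

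What rescues the paper is that it never runs Duhamel at this stage: it differentiates the equation, sets $\tilde u=\nabla u$, and performs a localized energy (Caccioppoli) estimate by pairing $\tilde u_t=\Delta\tilde u+3u^2\tilde u$ with $\phi_0^2\tilde u$. The energy identity puts the dissipation term $\|\phi_0\nabla\tilde u\|^2_{L^2_tL^2_x}$ on the left-hand side, and by Sobolev this controls $\|\phi_0\nabla u\|^2_{L^2_tL^4_x}$ --- which is exactly the quantity multiplying $\epsilon^2$ in the nonlinear term $\int\!\!\int\phi_0^2u^2\tilde u^2\le\|u\|^2_{L^\infty_tL^4_x}\|\phi_0\tilde u\|^2_{L^2_tL^4_x}$ --- so the absorption closes with no borderline time integral. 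This step crucially uses the $\dot H^1$ part of the hypothesis, which your scheme never exploits. After this first derivative gain (local control of $D^2u$ in $L^2_{t,x}$), the iteration to higher derivatives on further shrinking cylinders proceeds essentially as you describe, and your concluding remarks about routine bootstrapping and linear dependence on $\epsilon$ would then be fine. To repair your proposal you would need either to adopt this energy-method first step, or to replace~\eqref{STdecayheat} by an $L^2_t$-maximal-regularity estimate applied to the equation for $\nabla(\eta u)$, which amounts to the same thing.
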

\begin{proof}
Assume $\|u\|_{L_t^{\infty} (\dot{H}_x^1 \cap L_x^4) (Q_1)} < \epsilon, $ for 
$\epsilon$ small enough (to be picked).
Define  
 \[
   \| u \|^2_{X(Q_1)} := \| \nabla u \|^2_{L^{\infty}_t L^2_x \cap L^2_t L^4_x (Q_1)} 
   + \| u \|^2_{L^{\infty}_t L^4 (Q_1) } + \| D^2 u\|^2_{L^2_t L^2_x(Q_1)}.
\]
Assuming for ease of writing that $u$ is real-valued, differentiating~\eqref{CP} 
and defining $\tilde{u} := \nabla u$, we get
\begin{equation} \label{d1eqn}
  \tilde{u}_t = \Delta \tilde{u} + 3 u^2 \tilde{u}.
\end{equation}
Consider a smooth, compactly supported spatial cut-off function $\phi_0(x)$ such that 
$\text{supp}(\phi_0) \subset B_1(0)$ and $\phi_0 \equiv 1$ on $B_{\rho_0}(0)$,
for some $\frac{1}{2} < \rho_0 < 1$ to be chosen. 
Multiplying the above equation by $\phi^2_0 \tilde{u}$ and integrating in space-time 
(from now on, unless otherwise specified, $t \in [-1,0]$): 
\[
\begin{split}
  &\ds \int_{-1}^t \int_{|x| \leq 1} (\tilde{u}_t - \Delta \tilde{u}) \phi^2_0 \tilde{u}\hspace{0.2em}  dx dt = 3 \int_{-1}^t \int_{|x| \leq 1} (u^2 \tilde{u}) \phi^2_0 \tilde{u} \hspace{0.2em} dx dt \\
  & \qquad \Rightarrow \frac{1}{2} \| \phi_0 \tilde{u}(t) \|^2_{L^2} + \int_{-1}^t \int_{|x| \leq 1} \phi^2_0 |\nabla \tilde{u}|^2 \hspace{0.2em} dx dt \\
  & \quad = \frac{1}{2} \| \phi_0 \tilde{u}(0) \|^2_{L^2} + 3 \int_{-1}^t \int_{|x| \leq 1} \phi^2_0 u^2 \tilde{u}^2 dx dt 
  \ds + 2 \int_{-1}^t \int_{|x| \leq 1} \phi_0 \nabla \phi_0 (\tilde{u} \nabla \tilde{u}) dx dt .
\end{split}
\]
For the sake of brevity, let us define $v_0 := \phi_0 \tilde{u} = \phi_0 \nabla u$ 
and thus (always on the same cylinder):
\begin{equation*}
\begin{split}
  \|v_0\|^2_{L^\infty_t L^2_x} + \| \phi_0 \nabla \tilde{u} \|^2_{L^2_t L^2_x} &\lesssim 
  \|v_0 (0,x) \|^2_{L^2} + \| u^2 \|_{L^{\infty}_t L^2_x } \| v^2_0 \|_{L^1_t L^2_x } + \| \phi_0 \nabla \tilde{u} \|_{L^2_t L^2_x} \|\tilde{u} \|_{L^2_t L^2_x} \\
  &=\|v_0 (0,x) \|^2_{L^2} + \| u \|^2_{L^{\infty}_t L^4_x } \| v_0 \|^2_{L^2_t L^4_x } + \| \phi_0 \nabla \tilde{u} \|_{L^2_t L^2_x } \|\tilde{u} \|_{L^2_t L^2_x }.
\end{split}  
\end{equation*}
By the smallness assumed on the cylinder $Q_1$ and an application of Young's inequality, 
for any $\delta > 0$ (and also using H\"older and the boundedness of the domain):  
\begin{equation*}
\|v_0\|^2_{L^\infty_t L^2_x} + \| \phi_0 \nabla \tilde{u} \|^2_{L^2_t L^2_x} \lesssim \epsilon^2 +\epsilon^2 \| v_0 \|^2_{L^2_t L^4_x (Q_1)} +\delta^2 \| \phi_0 \nabla \tilde{u} \|_{L^2_t L^2_x (Q_1)}^2 +  \frac{\|\tilde{u} \|^2_{L^\infty_t L^4_x (Q_1)}}{\delta^2} 
\end{equation*}
\begin{equation*} 
  \Rightarrow \|v_0\|^2_{L^\infty_t L^2_x} + \| \phi_0 \nabla \tilde{u} \|^2_{L^2_t L^2_x} \lesssim \epsilon^2 + \frac{\epsilon^2}{\delta^2} + \epsilon^2 \| v_0 \|^2_{L^2_t L^4_x}  
\end{equation*}
if $\delta$ is chosen small enough.
Since $\nabla v_0 = \phi_0 \nabla \tilde{u} + \nabla \phi_0 \hspace{0.2em} \tilde{u}:$
\begin{equation*} 
  \| \nabla v_0 \|_{L^2} \lesssim \| \phi_0 \nabla \tilde{u} \|_{L^2} + 
  \| \nabla \phi_0 \|_{L^4} \| \tilde{u} \|_{L^4} 
\end{equation*} 
and so using the Sobolev inequality,
\begin{equation*} 
  \| v_0 \|^2_{L^{\infty}_t L^{2}_x} + \| \nabla v_0 \|^2_{L^2_t L^2_x} + \| v_0 \|^2_{L^2_t L^4_x} 
  \lesssim \epsilon^2 + \epsilon^2 \| v_0 \|^2_{L^2_t L^4_x}  .
\end{equation*}
Choosing $\e$ small enough yields 
\[
  \| u \|_{X(Q_{\rho_0})} \lesssim \epsilon . 
\]

Define another smooth compactly supported cut-off function $\phi_1(x) \leq \phi_0(x)$, 
with support in $B_{\rho_0}$, and $\phi_1 \equiv  1$ on $B_{\rho_1}(0)$, some 
$\frac{1}{2} < \rho_1 < \rho_0 < 1$ to be chosen. 
Let $\hat{v} := D^2 u$, and $v_1 := \phi_1 \hat{v}$. 
\begin{remark}
We will be abusing notation from this point onwards. For the pointwise operations and estimates we are actually considering the mixed partial derivatives $\partial_k \partial_j u, j,k=1,...,4$ but we will be writing $D^2 u$ all the same without taking care to specify the matrix element at hand. In the end, we are using standard matrix norms.
\end{remark}
Differentiating~\eqref{d1eqn}, multiplying by $\phi_1^2 \hat{v}$, and integrating over space gives 
\begin{equation} \label{spaceint}
\begin{split}
  \frac{1}{2} \partial_t \int \phi_1^2 \hat{v}^2 dx +
  \int \phi_1^2 |\nabla \hat{v}|^2 dx &= 3 \int \phi_1^2 u^2 \hat{v}^2 dx \\ 
  &\quad + 
  6 \int \phi_1^2 u \tilde{u}^2 \hat{v} dx +
  2 \int \phi_1 \nabla \phi_1 \cdot \hat{v} \nabla \hat{v} dx.
\end{split}
\end{equation}
Since by the previous step, $ \| \nabla v_0 \|_{L^2 L^2(Q_{\rho_0})} \lesssim \e$,
we can find $-1 < t_1 < -\rho_0^2$ such that
$\| \nabla v_0 (\cdot, t_1)\|_{L^2(B_{\rho_0})} \lesssim \epsilon$
(where the implied constant may depend on $\rho_0$), so that
\[
  \| \phi_1 \hat{v}(\cdot, t_1) \|_{L^2} = \| \phi_1 D^2 u(\cdot, t_1) \|_{L^2} \leq
  \| \nabla v_0(\cdot, t_1)\|_{L^2(B_{\rho_0})} \lesssim \epsilon.
\]
Integrating~\eqref{spaceint} in $t$ from $t_1$ to $0$, and using the estimates
from the previous step:
\[
\begin{split}
  \| v_1 \|_{L^\infty_t L^2}^2 + \| \phi_1 \nabla \hat{v} \|_{L^2_t L^2_x}^2 
  &\lesssim \| \phi_0 u\|_{L^{\infty} L^4}^2 \|v_1\|_{L^2 L^4}^2 
  + \| \hat{v} \nabla \hat{v} \phi_1 \nabla \phi_1 \|_{L^1_t L^1_x} + \e^2 \\
 &\quad + \| \phi_0 u \|_{L^{\infty} L^4}  \| v_0 \|_{L^{\infty} L^4}  \|v_0 \|_{L^2 L^4} \|v_1\|_{L^2 L^4}  \\ 
  & \lesssim \e^2 \| v_1 \|_{L^2 L^4}^2 + \e^3 \|v_1\|_{L^2 L^4}\\ &\quad+ 
  \| \phi_1 \nabla \hat{v} \|_{L^2 L^2} \|  \nabla \phi_1 \hat{v} \|_{L^2 L^2} + \e^2
\end{split}
\]
where everywhere here the time interval is $[t_1,0]$.
We have 
\[
  \nabla v_0 = \phi_0  D^2 u + \nabla \phi_0 \nabla u = \phi_0 \hat{v} + \nabla \phi_0 \tilde{u}
\] 
and so
\[
  | \phi_0 \hat{v} | \lesssim | \nabla v_0 | + | \nabla \phi_0 \tilde{u}|  \;
  \Rightarrow \; | \nabla \phi_1 \hat{v} | \lesssim |\frac{\nabla \phi_1}{\phi_0}| | \phi_0 \hat{v} | 
  \lesssim | \nabla v_0 | + |\nabla \phi_0 \tilde{u}|.
\]
Thus
\[
  \| \nabla \phi_1 \hat{v} \|_{L^2 L^2} \lesssim \| \nabla v_0 \|_{L^2 L^2} + \epsilon \lesssim \epsilon.
\] 
By Young's  inequality once more, for some $\delta_1 > 0$ sufficiently small,
\begin{equation*}
  \| v_1 \|^2_{L^{\infty} L^2} + \| \phi_1 \nabla \hat{v} \|^2_{L^2 L^2}  
 \lesssim \epsilon^2 \| v_1 \|^2_{L^2 L^4}  + 
 \delta_1^2 \| \phi_1 \nabla \hat{v} \|^2_{L^2 L^2} + 
  \delta_1^2 \|v_1 \|^2_{L^2 L^4} + \frac{\epsilon^2}{\delta_1^2}.
\end{equation*}
Using Sobolev again as above, 
$\| v_1 \|^2_{L^{\infty} L^2} + \|v_1\|^2_{L^2 L^4} + \| \nabla v_1 \|^2_{L^2 L^2} \lesssim \epsilon.$ 
In particular 
\[
  \|D^2 u \|_{X(Q_{\rho_1})} \lesssim \epsilon.
\]

This process can be iterated a given finite number of times, to show that for given
$k > 0$, there are $\epsilon_0 = \epsilon_0(k)$, $C = C(k)$, such that if 
$\| u \|_{L^{\infty} (\dot{H}^1 \cap L^4) (Q_1)} = \epsilon < \epsilon_0$,
then $\| D^k u \|_{X(Q_{1/2})} \leq C \epsilon.$
\end{proof}

  We proceed now with the proof of Theorem~\ref{Rigidity Theorem}.
\begin{proof}
Let us assume that the conclusion is false, i.e., $\ds T^{*}  < +\infty.$
Note first that 
\[
   \lambda(t) \rightarrow +\infty.
\] 
In fact,  $\ds \ds \liminf_{t \to T^*-} \sqrt{T^*-t} \; \lambda(t) > 0$,
since if $\sqrt{T^*-t_n} \lambda(t_n) \to 0$ along
a sequence $t_n \nearrow T^*$,
by the compactness assumption (and up to subsequence) 
\[
  v_n(x) := \frac{1}{\lambda(t_n)} u_c(t_n,\frac{x-x(t_n)}{\lambda(t_n)})
  \xrightarrow{\dot{H}^1} \; \exists \; v(x) \in \dot H^1.
\]
Let $\hat T > 0$ be the maximal existence time for 
the solution of the Cauchy problem~\eqref{CP} with initial data $v(x)$.
Define $w_n(t,x)$ to be the solutions with initial data 
$w_n(x,t_n) = v_n(x)$ prescribed at time $t_n$, and denote their maximal lifespans as 
$[t_n, T^{\text{max}}_n)$. By continuous dependence on initial data, 
$\ds 0 < \hat{T} \leq \liminf ( T^{\text{max}}_n - t_n )$. 
But from scaling: 
\[
\begin{split}
  T^{\text{max}}_n - t_n &= T^{\text{max}} \left( \frac{1}{\lambda(t_n)} u_c (t_n, \frac{\cdot-x(t_n)} 
  {\lambda(t_n)}) \right) = \lambda^2(t_n) T^{\text{max}}(u_c(t_n,\cdot) ) \\
  &= \lambda^2(t_n) (T^*-t_n) \to 0,
\end{split}
\]
a contradiction.

By compactness in $\dot{H}^1$, and the continuous embedding 
$\dot{H}^1 \hookrightarrow L^4,$ for every $\epsilon > 0,$ there is a $R_{\epsilon} > 0$ 
such that for all $t \in I:=[0,T^*) :$
\begin{equation} \label{out} 
  \int_{|x-x(t)| \geq \frac{R_{\epsilon}}{\lambda(t)}} \left(
  |\nabla u_c (t,x)|^2  + |u_c (t,x)|^4 \right) dx < \epsilon.
\end{equation}

Fix any $\{t_n\} \subset [0, T^*), t_n \nearrow T^*,$ and let 
$\lambda_n = \lambda(t_n) \rightarrow \infty$ and 
$\{x_n\} = \{ x(t_n) \} \subset \mathbb{R}^4$, so that (up to subsequence)
\begin{equation*} 
  v_n(x) =  \frac{1}{\lambda_n} u_c(\frac{x-x_n}{\lambda_n}, t_n) 
  \xrightarrow{\dot{H^1}} \bar{v}, \mbox{ some } \bar{v} \in \dot{H}^1,
\end{equation*}
and also in $L^4$ by Sobolev embedding.

We also make and prove the following claim as in \cite{KK}
\begin{claim}
For any $R>0,$ \label{R-claim} 
\begin{equation*}
\lim_{n \rightarrow \infty} \int_{|x| \leq R} |u_c (x, t_n)|^2 dx = 0.
\end{equation*}
\end{claim}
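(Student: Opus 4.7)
The plan is to exploit the scale information contained in the compactness hypothesis: $u_c(t_n)$ concentrates around $x_n$ at the small scale $1/\lambda_n$, and since $L^2$ is subcritical for the $\dot H^1$-invariant scaling of \eqref{CP}, only this $\lambda_n^{-2}$-amount of volume carries any $L^2$ mass on a fixed ball at the origin; the $L^4$-tightness of the rescaled profiles supplied by \eqref{out} then controls what little remains.

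Concretely, I would introduce the $\dot H^1$-invariant rescaling
\[
\tilde v_n(y) \;:=\; \tfrac{1}{\lambda_n}\, u_c\!\left(t_n,\, x_n + y/\lambda_n\right), \qquad \text{so}\quad u_c(t_n,x) = \lambda_n\, \tilde v_n\!\bigl(\lambda_n(x-x_n)\bigr).
\]
Under this rescaling, \eqref{out} reads $\int_{|y|\ge R_\epsilon} |\tilde v_n(y)|^4\, dy < \epsilon$ uniformly in $n$ (for a constant $R_\epsilon$ depending only on $\epsilon$), and $\|\tilde v_n\|_{L^4}^4 = \|u_c(t_n)\|_{L^4}^4 \lesssim 1$ uniformly by Sobolev. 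The substitution $y = \lambda_n(x-x_n)$ gives
\[
\int_{|x| \le R} |u_c(t_n,x)|^2\, dx \;=\; \lambda_n^{-2} \int_{|y+\lambda_n x_n| \le \lambda_n R} |\tilde v_n(y)|^2\, dy.
\]
Fixing $\epsilon>0$ and splitting the $y$-integral at $|y|=R_\epsilon$, H\"older's inequality bounds the inner piece by $|B_{R_\epsilon}|^{1/2}\|\tilde v_n\|_{L^4}^2 \lesssim R_\epsilon^2$, and the outer piece, confined to a ball of radius $\lambda_n R$, by
\[
(\lambda_n R)^2 \Bigl(\int_{|y|\ge R_\epsilon} |\tilde v_n|^4\, dy\Bigr)^{1/2} \;\lesssim\; (\lambda_n R)^2 \epsilon^{1/2}.
\]
Combining and dividing by $\lambda_n^2$ yields
\[
\int_{|x| \le R} |u_c(t_n)|^2\, dx \;\lesssim\; \lambda_n^{-2} R_\epsilon^2 + R^2 \epsilon^{1/2}.
\]
Since $\lambda_n \to \infty$ (established at the outset of the proof of Theorem~\ref{Rigidity Theorem}), it follows that $\limsup_n \int_{|x|\le R} |u_c(t_n)|^2\, dx \lesssim R^2 \epsilon^{1/2}$, and as $\epsilon>0$ was arbitrary the limit is zero.

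There is no serious obstacle here; the argument is essentially scaling bookkeeping. The only conceptual point --- which explains why the claim is non-trivial despite the absence of any a priori $L^2$-bound on $u_c$ --- is that subcritical $L^2$-mass cannot persist under fine-scale concentration as soon as the rescaled family is $L^4$-tight, and that $L^4$-tightness is precisely what \eqref{out} provides.
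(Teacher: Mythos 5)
Your proof is correct and is essentially the paper's argument: the same $\dot H^1$-invariant change of variables, the same two-region splitting, and the same H\"older interpolation of $L^2$ against $L^4$ on balls, with the $\lambda_n^{-2}$ subcritical scaling factor doing the work. The only (cosmetic) difference is that you invoke compactness through the uniform $L^4$-tightness~\eqref{out} on the outer region, whereas the paper splits at the $n$-dependent radius $\epsilon\lambda_n R$ and uses the strong $L^4$ convergence $v_n \to \bar v$ there instead; both are equivalent consequences of the precompactness of $K$.
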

\begin{proof} 
\begin{equation*} \begin{split} \int_{|x| \leq R} |u_c(x, t_n)|^2 dx &=  
\int_{|x| \leq R} |\lambda_n v_n(\lambda_n x + x_n)|^2 dx \\ &= 
\frac{1}{\lambda^2_n} \int_{|y-x_n| \leq \lambda_n R} |v_n(y)|^2 dy = 
\frac{1}{\lambda^2_n} \| v_n \|^2_{L^2(B_{\lambda_n R}(x_n))}.
\end{split} 
\end{equation*} 
Denoting $B_r := B_r(0)$, for any $\epsilon > 0$, 
\begin{equation*} 
\frac{1}{\lambda^2_n} \| v_n \|^2_{L^2(B_{\lambda_n R}(x_n))} 
=  \frac{1}{\lambda^2_n} \| v_n \|^2_{L^2(B_{\lambda_n R}(x_n))
\cap B_{\epsilon \lambda_n R})} +  
\frac{1}{\lambda^2_n} \| v_n \|^2_{L^2(B_{\lambda_n R}(x_n)) \cap B^{c}_{\epsilon \lambda_n R})}. 
\end{equation*} 
Using H\"older's inequality, and the compactness, we get 
\begin{equation*}
\begin{split}
  \frac{1}{\lambda^2_n} \| v_n \|^2_{L^2(B_{\lambda_n R}(x_n))} & \lesssim 
 \frac{1}{\lambda^2_n} \| v_n \|^2_{L^4(B_{\lambda_n R}(x_n) \cap B^{c}_{\epsilon \lambda_n R})} | B_{\lambda_n R}(x_n) |^{\frac{1}{2}}\\ &\quad+ \frac{1}{\lambda^2_n} \| v_n \|^2_{L^4(\mathbb{R}^4)} |B_{\epsilon \lambda_n R}|^{\frac{1}{2}} \\
   &\lesssim \frac{| B_{\lambda_n R}(x_n) |^{\frac{1}{2}}}{\lambda^2_n}\left( \| v_n - \bar{v} \|^2_{L^4(\R^4)} + \| \bar{v} \|^2_{L^4(B_{\lambda_n R}(x_n) \cap B^{c}_{\epsilon \lambda_n R})}\right)\\
   &\quad+ \epsilon^2 R^2 \|\bar{v}\|^2_{L^4(\mathbb{R}^4)}\\
   &\lesssim R^2\| v_n - \bar{v} \|^2_{L^4(\R^4)}+R^2  \|\bar{v}\|^2_{L^4( B^{c}_{\epsilon \lambda_n R})} + \epsilon^2 R^2 \|\bar{v}\|^2_{L^4(\mathbb{R}^4)}  \\
   & \lesssim R^2 \| v_n - \bar{v} \|^2_{L^4(\R^4)}+ R^2  \|\bar{v}\|^2_{L^4( B^{c}_{\epsilon \lambda_n R})}+ \epsilon^2 R^2 .
\end{split}   
\end{equation*}
The first term goes to zero (as $n \rightarrow \infty$) because of the compactness, the second goes to zero (for fixed $\epsilon$) since $\lambda_n \rightarrow \infty,$ and the last one is arbitrarily small with $\epsilon.$ 
\end{proof}

We also prove that the center of compactness $x(t)$ is bounded:
\begin{proposition}
$\ds \sup_{0 \leq t < T^*} |x(t)| < \infty$.
\end{proposition}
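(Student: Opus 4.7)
The plan is to argue by contradiction in the spirit of Kenig--Koch, upgrading the $\dot H^1$-precompactness of $K$ to genuine $L^2$-control on $u_c$ and then ruling out escape of the concentration center. Suppose for contradiction that $|x(t_n)| \to \infty$ for some $t_n \nearrow T^*$. First, I would extend Claim~\ref{R-claim} to an arbitrary center $x_0 \in \R^4$: the change of variables $z = \lambda_n y + x_n$ turns $\int_{B_R(x_0)} |u_c(t_n, y)|^2 dy$ into $\lambda_n^{-2} \int_{|z - (x_n + \lambda_n x_0)| \leq \lambda_n R} |v_n(z)|^2 dz$, and the shift $\lambda_n x_0$ does not disturb the argument; strong convergence $v_n \to \bar v$ in $\dot H^1 \hookrightarrow L^4$ together with $\lambda_n \to \infty$ gives
\begin{equation*}
\lim_{t \to T^*} \int_{B_R(x_0)} |u_c(t, y)|^2 dy = 0 \qquad \text{for every } R > 0,\, x_0 \in \R^4.
\end{equation*}
Second, I would derive a Lipschitz-in-$t$ bound for $\int \phi_{R, x_0} |u_c|^2 dx$ uniform in $(R, x_0)$, by differentiating against a cutoff $\phi = \phi_{R, x_0}$ with $\phi \equiv 1$ on $B_R(x_0)$, $\mathrm{supp}\,\phi \subset B_{2R}(x_0)$, $\|\Delta\phi\|_\infty \lesssim R^{-2}$:
\begin{equation*}
\frac{d}{dt}\int \phi |u_c|^2 dx = -2\int \phi |\nabla u_c|^2 dx + \int |u_c|^2 \Delta\phi \, dx + 2\int \phi |u_c|^4 dx.
\end{equation*}
The right-hand side is bounded by some $C_1$ uniform in $(R, x_0, t)$, thanks to the uniform bounds $\|\nabla u_c(t)\|_{L^2}, \|u_c(t)\|_{L^4} \lesssim 1$ coming from compactness of $K$ and Sobolev embedding, together with the H\"older estimate $\int_{B_{2R}\setminus B_R} |u_c|^2 \lesssim R^2 \|u_c\|_{L^4}^2$ that absorbs the $R^{-2}$ factor from $\Delta\phi$.

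Combining these two steps gives $\int_{B_R(x_0)} |u_c(t)|^2 dx \leq C_1(T^*-t)$ uniformly in $(R, x_0)$; letting $R \to \infty$ yields $\|u_c(t)\|_{L^2(\R^4)}^2 \leq C_1(T^*-t) \to 0$. In particular $u_c(0) \in L^2(\R^4)$, so the entire orbit lies in $L^\infty([0, T^*); H^1(\R^4))$ and $\|u_c(t)\|_{L^2} \to 0$ as $t \to T^*$.

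The final step, which I expect to be the main obstacle, is to convert this $L^2$-vanishing into the desired contradiction with $|x(t_n)| \to \infty$. A natural plan is a first-moment argument: define the $L^2$-center of mass $y(t) := \int x |u_c(t, x)|^2 dx / \|u_c(t)\|_{L^2}^2$, which by the spatial concentration of $u_c$ in $B_{R_\epsilon/\lambda(t)}(x(t))$ should track $x(t)$ to within $O(1/\lambda(t))$, and differentiate using the equation to obtain
\begin{equation*}
y'(t)\, \|u_c(t)\|_{L^2}^2 = 2\int \bigl(x - y(t)\bigr) \bigl(|u_c|^4 - |\nabla u_c|^2\bigr) dx,
\end{equation*}
then exploit the tight localization of $|u_c|^4 + |\nabla u_c|^2$ near $x(t) \approx y(t)$ to show $y(t)$ has bounded variation on $[0, T^*)$, contradicting $|y(t_n)| \to \infty$. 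Making this rigorous is delicate: I need to first justify the finiteness of $\int x |u_c|^2 dx$ (which the $L^2$-bound alone does not ensure), presumably via smoothly truncated moments, and then carefully balance the tail contribution where $|x-y(t)|$ is large but $|u_c|^4 + |\nabla u_c|^2$ is $\dot H^1 \cap L^4$-small against the blow-up rates $\lambda(t) \gtrsim (T^*-t)^{-1/2}$ and $\|u_c(t)\|_{L^2}^2 \lesssim (T^*-t)$ established in the previous step.
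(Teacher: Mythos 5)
Your first three steps are sound and in fact reproduce a genuine piece of the paper's argument: the extension of Claim~\ref{R-claim}, the uniform Lipschitz bound on the truncated mass (the paper uses a first-order cutoff plus Hardy, you use $\Delta\phi$ plus H\"older -- both work), and the conclusions $u_c(t)\in L^2$ and $\|u_c(t)\|_{L^2}^2\leq C(T^*-t)$ are all established in the paper, where they serve to prove the auxiliary fact $\underline{E}:=\inf_t E(u_c(t))>0$. The gap is exactly where you flag it: the final step. The center-of-mass argument does not work, for a structural reason. The precompactness of $K$ in $\dot H^1$ localizes the \emph{energy density} $|\nabla u_c|^2+|u_c|^4$ near $x(t)$ (this is~\eqref{out}); it says nothing about where the \emph{mass density} $|u_c|^2$ lives. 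Indeed the mass inside the concentration ball $B_{R_\epsilon/\lambda(t)}(x(t))$ is at most $\|u_c\|_{L^4}^2\,|B_{R_\epsilon/\lambda(t)}|^{1/2}\lesssim\lambda(t)^{-2}$, which is of the same order as the total mass $\|u_c(t)\|_{L^2}^2\lesssim T^*-t$; nothing prevents a comparable fraction of the mass from sitting near the origin, in which case $y(t)$ does not track $x(t)$ at all. Moreover the identity $y'(t)\|u_c\|_{L^2}^2=2\int(x-y)(|u_c|^4-|\nabla u_c|^2)\,dx$ requires control of the first moment $\int|x|\,(|u_c|^4+|\nabla u_c|^2)\,dx$, and precompactness in $\dot H^1$ gives only smallness of \emph{tails} of the energy, not decay fast enough to make a weighted integral with an unbounded weight finite, let alone small. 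So both the definition of $y(t)$ as a proxy for $x(t)$ and the bounded-variation estimate for it fail.

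What actually closes the argument in the paper is a localized \emph{energy} (not mass) computation driven by energy dissipation. One first shows $\underline{E}>0$: if $\underline{E}\leq 0$ then, passing to the $\dot H^1$-limit $\bar v$ of the rescaled solution, $K(\bar v)=2\underline{E}-\tfrac12\|\bar v\|_{L^4}^4<0$, so $\tfrac{d}{dt}\|u_c(t)\|_{L^2}^2=-K(u_c(t))>0$ near $T^*$, contradicting your bound $\|u_c(t)\|_{L^2}^2\leq C(T^*-t)\to 0$ (this is precisely where your steps 1--3 get used). Then one takes an \emph{exterior} cutoff $\psi_{R_0}$ vanishing on $B_{R_0}$ and equal to $1$ outside $B_{2R_0}$, and considers $\mathcal{E}(t)=\int(\tfrac12|\nabla u_c|^2-\tfrac14|u_c|^4)\psi_{R_0}\,dx$. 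If $|x(t_n)|\to\infty$, then by~\eqref{out} the concentration ball eventually leaves $B_{2R_0}$, so $\mathcal{E}(t)\to\underline{E}$ as $t\to T^*$, while $R_0$ can be chosen so that $\mathcal{E}(t_0)\leq\tfrac14\underline{E}$; thus $\mathcal{E}$ must increase by at least $\tfrac14\underline{E}$ on some $[t_0,t_1]$. But $\mathcal{E}'(t)=-\int(\partial_t u_c)^2\psi_{R_0}-\int\partial_t u_c\,\nabla u_c\cdot\nabla\psi_{R_0}\lesssim\|\partial_t u_c\|_{L^2}$, so the total increase is $\lesssim\sqrt{t_1-t_0}\,\|\partial_t u_c\|_{L^2_tL^2_x([t_0,T^*))}$, which tends to $0$ as $t_0\nearrow T^*$ by the energy dissipation identity~\eqref{energy dissipation}. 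This is the mechanism your proposal is missing: it is the finiteness of $\int_0^{T^*}\|\partial_t u_c\|_{L^2}^2\,dt$ that prevents the energy from migrating to spatial infinity in finite time, and no appeal to $\partial_t u_c$ appears anywhere in your plan.
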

\begin{proof}
We will first make the assumption that
\begin{equation} \label{EbarAss}
  \underline{E} := \inf_{t \in [0,T^*) } E(u_c(t))> 0,
\end{equation}
and later show that this is indeed the case for compact blowing-up solutions, without any size restriction. Note that under the assumptions of our Theorem \ref{Main Theorem}, i.e., in the below threshold case, we certainly have that $ \underline{E} > 0.$ This can be easily deduced by the variational estimates in Lemma \ref{VarLemma} and the small data theory.

The energy dissipation relation 
\begin{equation} 
  E(u(t_2))  + \int_{t_1}^{t_2} \|u_t\|^2_{L^2} \; ds  = E(u(t_1)) \leq E(u(0))
\label{E-dissip} 
\end{equation} 
for $t_2 > t_1 > 0$ will be of use.
We will assume for contradiction that there is a sequence of times 
$\ds t_n \nearrow T^*: |x(t_n)| \rightarrow \infty.$

Choose a smooth cut-off function $\psi$ such that
\begin{equation*}
\psi(r) = \left\{
\begin{array}{rl}
0 & \text{if } r \leq 1\\
1 & \text{if } r \geq 2
\end{array} \right.
\end{equation*}
and define $\ds \psi_R(x) := \psi(\frac{|x|}{R}).$
Choosing any $t_0 \in (0,T^*),$ we can find $R_0 \geq 1$ such that
\begin{equation}
\int_{\mathbb{R}^4} \left( \frac{1}{2} |\nabla u_c(t_0)|^2 - \frac{1}{4} (u_c(t_0))^4  \right) \psi_{R_0}(x) \hspace{0.1em} dx  \leq \frac{1}{4}  \underline{E} . \label{1/4} 
\end{equation}
Since $|x(t_n)| \rightarrow \infty$ and $\lambda(t_n) \to \infty$, for any $\epsilon > 0$,
$B_{\frac{R \epsilon}{\lambda(t_n)} } (x(t_n)) \subset B^{c}_{2R_0}$ 
for $n$ large enough, and so by~\eqref{out}:
\begin{equation*} 
  \lim_{t \nearrow T^*} \int_{\mathbb{R}^4} \left( \frac{1}{2} |\nabla u_c(t)|^2 
  - \frac{1}{4} (u_c(t))^4  \right) \psi_{R_0}(x) dx = \underline{E}, 
\end{equation*}
hence we can find a $t_1 \in (t_0,T^*)$ such that
\begin{equation}
\int_{\mathbb{R}^4} \left( \frac{1}{2} |\nabla u_c(t_1)|^2 - \frac{1}{4} (u_c(t_1))^4  \right) \psi_{R_0}(x) \hspace{0.1em} dx \geq \frac{1}{2}  \underline{E}.  
\label{1/2} 
\end{equation}
Combining (\ref{1/4}) and (\ref{1/2}):
\begin{equation} \int^{t_1}_{t_0} \frac{d}{dt} \int_{\mathbb{R}^4} \left( \frac{1}{2} |\nabla u_c(t)|^2 - \frac{1}{4} (u_c(t))^4  \right) \psi_{R_0}(x) \hspace{0.1em} dx dt \geq \frac{1}{4}  \underline{E} . \label{dt1}  
\end{equation} 
On the other hand: 
\begin{equation*}
\begin{split}
\frac{d}{dt} \int_{\mathbb{R}^4} & \left( \frac{1}{2} |\nabla u_c(t)|^2 - \frac{1}{4} (u_c(t))^4  \right) \psi_{R_0}(x) \hspace{0.1em} dx 
= \int_{\mathbb{R}^4} \left( \nabla u_c \cdot \nabla (u_c)_t - u_c^3 (u_c)_t \right) \psi_{R_0}(x) \hspace{0.1em} dx \\
&= \int_{\mathbb{R}^4} \left( \nabla u_c \cdot \nabla (u_c)_t - ((u_c)_t - \Delta u_c) (u_c)_t \right) \psi_{R_0}(x) \hspace{0.1em} dx\\
&=-\int_{\mathbb{R}^4} (u_c)^2_t \psi_{R_0} \hspace{0.1em} dx - \int_{\mathbb{R}^4} (u_c)_t \nabla u_c \cdot \nabla \psi_{R_0} \hspace{0.1em} dx
\lesssim \int_{\R^4} |(u_c)_t| |\nabla u_c| \; dx,
\end{split}
\end{equation*}
since $\ds | \nabla \psi_{R_0} (x) | \lesssim \frac{1}{R_0} \leq 1$. So by H\"older,
\begin{equation}
\begin{split} 
\int^{t_1}_{t_0} \frac{d}{dt} \int_{\mathbb{R}^4} & \left( \frac{1}{2} |\nabla u_c(t)|^2 - \frac{1}{4} (u_c(t))^4  \right) \psi_{R_0}(x) \hspace{0.1em} dx dt \\
&\lesssim 
\| \nabla u_c \|_{L^\infty_t L^2} \; \sqrt{t_1 - t_0} \; \|(u_c)_t\|_{L^2 L^2 [t_0,t_1] \times \mathbb{R}^4} \\ &\lesssim  \|(u_c)_t\|_{L^2 L^2( [t_0,T^*) \times \mathbb{R}^4)}           
\end{split}
\label{dt2}
\end{equation}
where we have uniformly bounded the kinetic energy of $u_c$ by once more employing the compactness. Combining (\ref{dt1}) and (\ref{dt2}) yields:
\begin{equation} 
0 < \hspace{0.3em} \frac{1}{4} \underline{E} \hspace{0.3em} \lesssim \|(u_c)_t\|_{L^2 L^2( [t_0,T^*) \times \mathbb{R}^4)} \to 0 \mbox{ as } t_0 \nearrow T^*
\label{x-contradiction} 
\end{equation}
by the energy dissipation relation (\ref{E-dissip}), a contradiction.

Now we show~\eqref{EbarAss}.
Choose a smooth cut-off function $\phi$ such that
\begin{equation*}
\phi(r) = \left\{
\begin{array}{rl}
1 & \text{if } r \leq 1\\
0 & \text{if } r \geq 2
\end{array} \right.
\end{equation*}
and define $\phi_R (x):= \phi(\frac{|x|}{R})$, and
\[
  I_R(t) := \frac{1}{2} \int (u_c(x,t))^2 \phi_R (x) dx, \qquad t \in [0, T^*).
\]
We then have 
\begin{equation*}\ds I'_R(t) =  \int \phi_R ((u_c)^4 - |\nabla u_c|^2) dx 
- \frac{1}{R} \int u_c \nabla u_c \cdot \nabla \phi(\frac{x}{R}) dx 
\end{equation*} 
and by Sobolev, Hardy and the compactness, we can immediately deduce that
\begin{equation*} 
  |I'_R(t)| \leq C,
\end{equation*}
$C$ a constant. Integrating from $t_0$ to $T^{*} > t > t_0 \geq 0$:
\begin{equation*}  
  |I_R(t) - I_R(t_0)| \leq C(t-t_0). 
\end{equation*}
By Claim \ref{R-claim}, we get that $I_R(t) \rightarrow 0$ as $t \to T^*$, for all $R > 0$. Hence
\begin{equation*} 
 I_R(t_0) \leq C(T^{*}-t_0) .
\end{equation*}
Since this bound is uniform in $R$, by taking $\ds R \rightarrow \infty$, we conclude
$u_c(t_0) \in L^2$, and so indeed
$\ds  u_c(t) \in L^2, t \in [0,T^{*})$.
Moreover defining
\[
  I(t) := \frac{1}{2} \int |u_c(t,x)|^2 dx,
\] 
we conclude that
\begin{equation} \label{L2vanish}
  I(t) \leq C(T^{*}-t).
\end{equation}
Now the $L^2$-dissipation relation~\eqref{L2disd} gives
\begin{equation*} I'(t) = - \int \left( |\nabla u_c|^2 - |u_c|^4 \right) dx=  -K(u_c(t)),
\end{equation*}
\[
  K(u) :=  \int \left( |\nabla u|^2 - |u|^4 \right) dx = 2 E(u) - \frac{1}{2} \int |u|^4 dx.
\] 
Now for any sequence $ \{t_n\}_n \nearrow T^{*}$, let (up to subsequence) 
\[
  \frac{1}{\lambda(t_k)} u_c(\frac{x-x_k}{\lambda(t_k)}, t_k) 
  \xrightarrow{\dot{H^1}} \bar{v} \in \dot{H}^1.
\]
Proceeding by contradiction, we suppose $\underline{E} \leq 0$. 
If so,
\[
  K(\bar{v}) = \lim_{k \to \infty} K(u_c(t_k)) = 2 \underline{E} - \frac{1}{2} \int |\bar{v}|^4 
  \leq - \frac{1}{2} \int |\bar{v}|^4 < 0,
\]
since $\bar{v} \equiv 0$ would contradict the assumption $T^* < \infty$. So
\[
  I'(t_k) = -K(u_c(t_k)) \to -K(\bar{v}) > 0.
\]
Thus $I'(t) > 0$ for all t sufficiently close to $T^{*};$ otherwise, we could find a subsequence along which $I' \leq 0,$ and the preceding argument would provide a contradiction. 
So $I(t)$ is increasing for $t$ near $T^*$, which contradicts~\eqref{L2vanish}.

Thus we have shown that $\underline{E} > 0$, 
completing the proof that $|x(t)|$ remains bounded.
\end{proof}

Since $|x(t)|$ remains bounded while $\lambda(t) \xrightarrow{t \rightarrow T^{*}} \infty,$ by the compactness we can find an $R_0 > 0$ large enough such that for all $x, |x| \geq R_0:$
\begin{equation*} 
  \|u_c \|_{L^{\infty}_t \dot{H}^1_x \cap L^{\infty}_t L^4_x (\Omega_{T^*} )} < \epsilon_0,
\end{equation*}
where $\Omega_{T^*}:= (0,T^*) \times B_{\sqrt{T^*}}(x_0)$.

By an appropriate scaling and shifting argument, the Regularity Lemma~\ref{regularity}
shows that $u_c$ is smooth on
$\Omega := (\mathbb{R}^4 \setminus B_{R_0}(0)) \times [\frac{3}{4}T^*,T^*]$, 
with uniform bounds on derivatives. 
Since $u$ is continuous up to $T^*$ outside $B_{R_0},$ Claim \ref{R-claim} implies that $u_c(x, T^*) \equiv 0, $ in the exterior of this ball. Since $u_c$ is bounded and smooth in $\Omega,$ an application of the Backwards Uniqueness Theorem~\ref{Backwards} implies that $u_c \equiv 0$ in
$\Omega.$ Define $\tilde{\Omega}:= \mathbb{R}^4 \times (\frac{3}{4}T^*,\frac{7}{8}T^*]$. 
Applying the Unique Continuation Theorem~\ref{unique_c} on a cylinder of sufficiently 
large spatial radius, centered at a point of $\Omega$, implies
$u_c \equiv 0$ in  $\tilde{\Omega}.$ By the uniqueness guaranteed by the local wellposedness theory we get that $u_c \equiv 0$, which contradicts (\ref{U-BU}).
\end{proof}

\section{Blow-up} \label{blowup section}

In this section we give criteria on the initial data 
which ensure that the corresponding solutions blow-up in finite-time.

The following result is well-known \cite{Lev, Ball, Caz} but we give the proof for the convenience of the reader.
\begin{proposition} \label{Levine} 
Solutions of
\begin{equation}
\begin{split}
  & \ds u_t = \Delta u + |u|^{p-1}u, \qquad  1 < p \leq 2^*-1 = \frac{d+2}{d-2} \\
  & u(x,0) = u_0(x) \in H^1(\R^d)
\end{split}
\label{bup-eq}
\end{equation} 
with
\[
  E(u_0) := \int_{\R^d} \left( \frac{1}{2} |\nabla u_0|^2 dx - 
  \frac{1}{p+1} |u_0|^{p+1} \right) dx < 0
\] 
must blow-up in finite-time, in the sense that there is no global
solution $u \in C([0, \infty); H^1(\R^d))$.
\end{proposition}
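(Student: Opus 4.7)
The approach is the classical Levine concavity method, adapted to $\R^d$ by working with the $L^2$ norm of the solution (which is controlled because $u_0 \in H^1 \subset L^2$). The plan is to argue by contradiction: assume $u \in C([0,\infty); H^1(\R^d))$ is a global solution and build a positive auxiliary functional $\Phi(t)$ whose negative power $\Phi^{-\delta}$ turns out to be strictly concave, strictly positive, and strictly decreasing. A tangent-line argument then forces $\Phi$ to blow up at a finite time $t^*$, which makes $\|u(t)\|_{L^2}$ infinite at or before $t^*$, contradicting $u \in C([0,\infty); L^2)$.

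Concretely, let $\alpha := -E(u_0) > 0$, fix $T > 0$ (large, to be chosen at the end), and set
\[
  \Phi(t) := \int_0^t \|u(s)\|_{L^2}^2\,ds + (T-t)\,\|u_0\|_{L^2}^2, \qquad t \in [0,T].
\]
Differentiating and using the equation gives $\Phi'(t) = \|u(t)\|_2^2 - \|u_0\|_2^2 = 2\int_0^t \int u\,u_s\,dx\,ds$ and $\Phi''(t) = -2\|\nabla u\|_2^2 + 2\|u\|_{p+1}^{p+1}$. Substituting $\|u\|_{p+1}^{p+1}$ via the energy identity $E(u(t)) = E(u_0) - \int_0^t \|u_s\|_2^2\,ds$ yields
\[
  \Phi''(t) = (p-1)\|\nabla u\|_{L^2}^2 + 2(p+1)\alpha + 2(p+1)\int_0^t \|u_s\|_{L^2}^2\,ds,
\]
so $\Phi'' \geq 2(p+1)\alpha > 0$, and since $\Phi'(0) = 0$ this also gives $\Phi'(t) > 0$ for all $t \in (0,T]$.

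The central step combines this with the Cauchy--Schwarz bound
\[
  \tfrac{1}{4}(\Phi'(t))^2 = \Big(\int_0^t \int u\,u_s\,dx\,ds\Big)^{\!2} \leq \int_0^t \|u\|_2^2\,ds \cdot \int_0^t \|u_s\|_2^2\,ds \leq \Phi(t)\int_0^t \|u_s\|_2^2\,ds,
\]
where the last inequality uses $(T-t)\|u_0\|_2^2 \geq 0$ on $[0,T]$, to produce the Levine-type differential inequality
\[
  \Phi(t)\,\Phi''(t) \geq \tfrac{p+1}{2}(\Phi'(t))^2 + 2(p+1)\alpha\,\Phi(t).
\]
Setting $\delta := (p-1)/2 > 0$ (so $1+\delta = (p+1)/2$), this reads $\Phi\Phi'' - (1+\delta)(\Phi')^2 \geq 2(p+1)\alpha\,\Phi > 0$, which is equivalent to $(\Phi^{-\delta})''(t) = -\delta\,\Phi^{-\delta-2}\big[\Phi\Phi'' - (1+\delta)(\Phi')^2\big] < 0$ on $[0,T]$. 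Thus $\Phi^{-\delta}$ is a strictly concave, strictly positive, strictly decreasing function on $(0,T]$, and the tangent-line estimate yields a blow-up time bound $t^*(T) \leq t_0 + \Phi(t_0)/[\delta\,\Phi'(t_0)]$ for any $t_0 \in (0,T)$. Using the crude bound $\Phi'(t_0) \geq 2(p+1)\alpha\,t_0$ (integrate $\Phi'' \geq 2(p+1)\alpha$) together with the linear-in-$T$ growth of $\Phi(t_0)$, taking $t_0$ of order $\|u_0\|_2^2/(\alpha\delta)$ and then $T$ sufficiently large makes $t^*(T) < T$, giving the desired contradiction.

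The main subtlety, apart from the bookkeeping in the choice of $T$ and $t_0$, is justifying the $L^2$-dissipation identity at the regularity level $C_t H^1_x$. In the strictly subcritical range $p < (d+2)/(d-2)$ this is immediate from Sobolev embedding, while at the critical exponent one can approximate by smoother initial data and pass to the limit using the continuous dependence provided by the local theory.
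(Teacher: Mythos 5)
Your proof is correct and follows essentially the same route as the paper: the Levine concavity argument applied to a time integral of the $L^2$ norm, using the $L^2$- and energy-dissipation identities to arrive at $\Phi\Phi'' - (1+\delta)(\Phi')^2 > 0$ with $\delta = \frac{p-1}{2}$ and conclude finite-time blow-up of $\|u(t)\|_{L^2}$. The only difference is cosmetic: you absorb the initial-data cross term with the classical $(T-t)\|u_0\|_{L^2}^2$ device (so that $\Phi'(0)=0$, at the cost of choosing $T$ large at the end), whereas the paper adds a large constant $A$ to the functional and handles the cross term with an $\varepsilon$-Young inequality; both yield the same differential inequality and the same conclusion.
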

Notice that we can always find such initial data, e.g., if 
$u_0 = \lambda f, f \in H^1(\mathbb{R}^d)$ 
we can force negative energy by taking $\lambda$ large.

\begin{proof} 
We first derive some identities satisfied as long as a solution remains regular.

Multiplying the equation \eqref{bup-eq} first by $u$ and then by $u_t$ and integrating by parts we obtain the $L^2$-dissipation relation
\begin{equation}
\frac{d}{dt} \left(\frac{1}{2} \int_{\mathbb{R}^d} |u|^2 dx \right) = \int_{\mathbb{R}^d} |u|^{p+1} dx \label{bup-u} - \int_{\mathbb{R}^d} |\nabla u|^2 dx
=: - K(u)
\end{equation} 
and the energy dissipation relation
\begin{equation}
\int_{\mathbb{R}^d} |u_t|^2 dx = \frac{d}{dt} \left( \frac{1}{p+1}\int_{\mathbb{R}^d} |u|^{p+1} dx - \frac{1}{2} \int_{\mathbb{R}^d} |\nabla u|^2 dx \right)
= -\frac{d}{dt} E(u(t)).
\label{bup-u_t}
\end{equation} 
For convenience we define $J(t) := -E(t)$ and hence by (\ref{bup-u_t}) we have that
$\ds J'(t) := \int_{\mathbb{R}^d} |u_t|^2 dx  \geq 0$ and by the assumption on the energy $J(0) > 0.$ It will be also useful to write $J(t)$ as 
\begin{equation} J(t) = J(0) + \int_0^t \int_{\mathbb{R}^d} |u_t|^2 dx dt. \label{J}\end{equation}
Define 
\begin{equation} \label{Idef}
  I(t) = \int_0^t \int_{\mathbb{R}^d} |u|^2 dx dt + A 
\end{equation} 
with $A > 0, $ to be chosen later.
With this definition 
\begin{equation} 
  I'(t) = \int_{\mathbb{R}^d} |u|^2 dx  \label{I'(t)}
\end{equation} 
and
\begin{equation} 
 I''(t) = 2 \left(\int_{\mathbb{R}^d} |u|^{p+1} dx -  \int_{\mathbb{R}^d} |\nabla u|^2 dx \right). 
\end{equation}
Since $p>1, \delta:= \frac{1}{2} (p-1) > 0;$ a comparison with the energy functional yields
\begin{equation} 
I''(t) \geq 4 (1+\delta) J(t) = 4 (1+\delta) \left(J(0) + \int_0^t \int_{\mathbb{R}^d} |u_t|^2 dx dt \right ).\label{I''}
\end{equation} 
We can also rewrite
\[
  \ds I'(t) =  \int_{\mathbb{R}^d} |u|^2 dx  = \int_{\mathbb{R}^d} |u_0|^2 dx + 2 Re \int_0^t  
  \int_{\mathbb{R}^d} \bar{u} u_t dx dt.
\]
For any $\epsilon > 0$ the Young and H\"older inequalities give 
\begin{equation} 
(I'(t))^2 \leq 4 (1+\epsilon) \left(\int_0^t \int_{\mathbb{R}^d} |u|^2 dxdt \right)\left(\int_0^t \int_{\mathbb{R}^d} |u_t|^2 dxdt \right) + (1+\frac{1}{\epsilon}) \left(\int_{\mathbb{R}^d} |u_0|^2 dx \right)^2 \label{(I')^2} 
\end{equation}
Combining (\ref{I''}),(\ref{J}),(\ref{(I')^2}), for any $\alpha > 0$ we obtain:
\begin{equation} 
\begin{split}
  I''(t)I(t) -(1+\alpha) (I'(t))^2 &\geq 4 (1+\delta) \left[J(0) + \int_0^t \int_{\mathbb{R}^d} |u_t|^2 dx dt \right]\left[\int_0^t \int_{\mathbb{R}^d} |u|^2 dx dt + A  \right]  \\
\label{before DE}
  & \quad -4 (1+\epsilon) (1+\alpha) \left[\int_0^t \int_{\mathbb{R}^d} |u|^2 dxdt \right]\left[\int_0^t \int_{\mathbb{R}^d} |u_t|^2 dxdt \right] \\
  & \quad -(1+\frac{1}{\epsilon}) (1+\alpha) \left[\int_{\mathbb{R}^d} |u_0|^2 dx \right]^2 .
\end{split}
\end{equation}
Choose $\alpha,\epsilon$ small enough for $1+\delta \geq (1+\alpha) (1+\epsilon).$ Since $J(0)>0$ picking $A$ large enough we can ensure  $\ds I''(t)I(t) -(1+\alpha) (I'(t))^2 > 0.$ But this is equivalent to 
$\ds \frac{d}{dt} \left(\frac{I'(t)}{I^{\alpha+1}(t)}\right) > 0$ which in turn implies 
$\frac{I'(t)}{I^{\alpha+1}(t)} > \frac{I'(0)}{I^{\alpha+1}(0)} =:\tilde{a}$ for all $t > 0$.
Integrating  $I'(t) > \tilde{a} I^{\alpha+1}$ gives
\[
  \frac{1}{\alpha} \left(\frac{1}{I^{\alpha}(0)} - 
  \frac{1}{I^{\alpha}(t)} \right) > \tilde{a}t  \; \Rightarrow \; I^{\alpha}(t) > 
  \frac{I^{\alpha}(0)}{1-I^{\alpha}(0) \alpha\tilde{a}t} \rightarrow \infty
\] 
as $t \rightarrow \frac{1}{I^{\alpha}(0)\alpha\tilde{a}} = \frac{1}{A^{\alpha}\alpha\tilde{a}}=:\hat{t}$. 
This in turn implies that  $\ds \limsup_{t \to \hat{t}-} \|u\|_{L^2} = \infty$, showing that the solution cannot be globally in $C_t H^1$.
Note also that (\ref{bup-u}) implies $\ds \limsup_{t \to \hat{t}-} \|u\|_{L^{p+1}} = \infty$.
\end{proof}

We present a refinement in the \textit{critical case} which includes some positive energy data,
and in particular establishes Theorem~\ref{blowup crit1}.
So consider now equation~\eqref{CP_d}, for which
\[
  E(u) = \int_{\R^d} \left( \frac{1}{2} |\nabla u|^2 - \frac{1}{2*} |u|^{2^*} \right) dx.
\]
\begin{proposition} 
Let $u_0 \in H^1(\mathbb{R}^d)$ such that
\begin{equation} \label{BUset}
  E(u_0) < E(W) \hspace{0.5em} \text{and} \hspace{0.5em} 
  \|\nabla u_0\|_{L^2} \geq \|\nabla W\|_{L^2}.
\end{equation}
Then the corresponding solution $u$ to (\ref{CP_d}) blows up in finite time.
That is, $T_{max}(u_0)$ (coming from the $\dot H^1$ local theory as in 
Theorem~\eqref{lwp}) is finite.
\end{proposition}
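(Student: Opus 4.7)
The plan is to extend the Levine-type concavity argument of Proposition~\ref{Levine} from the negative-energy setting to the critical below-$E(W)$ setting. Suppose, for contradiction, $T_{\max}(u_0)=\infty$. Since $u_0\in H^1$ and the $\dot H^1$ local theory already delivers $u_t,D^2 u\in L^2_{t,x}$ on every compact time interval, the inequality $\tfrac{d}{dt}\|u\|_{L^2}^2\leq 2\|u\|_{L^{2^{*}}}^{2^{*}}\leq C\|\nabla u\|_{L^2}^{2^{*}}$ together with the already-known bound on $\|\nabla u\|_{L^{\infty}_tL^2_x}$ and a Gr\"onwall-type estimate promotes $u$ to a $C([0,T];H^1(\mathbb{R}^d))$ solution for every $T<\infty$. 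Hence if I can show that $\|u(t)\|_{L^2}$ becomes infinite at some finite time $\hat t$, this is the desired contradiction.

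The key new ingredient is an \emph{above-threshold} variational trapping, playing the role that $E(u_0)<0$ plays in Proposition~\ref{Levine}. Setting $K(v) := \int_{\mathbb{R}^d}(|\nabla v|^2 - |v|^{2^{*}})\,dx$, the Pohozaev identity $K(W)=0$ together with
\[
  E(v) \;=\; \tfrac{1}{d}\|\nabla v\|_{L^2}^2 + \tfrac{1}{2^{*}}K(v)
\]
and the sharp Sobolev inequality give the following analog of Lemma~\ref{VarLemma}: whenever $E(v)<E(W)$ and $\|\nabla v\|_{L^2}\geq\|\nabla W\|_{L^2}$, one has $K(v)\leq 2^{*}(E(v)-E(W))<0$, and in turn $\|\nabla v\|_{L^2}>\|\nabla W\|_{L^2}$ strictly. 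By continuity of $\|\nabla u(t)\|_{L^2}$ in $t$ and energy monotonicity, this trapping is preserved along the flow, and combining with~\eqref{energy dissipation} yields
\[
  -K(u(t)) \;\geq\; 2^{*}\bigl(E(W)-E(u(t))\bigr) \;=\; 2^{*}\Bigl(\eta + \int_0^t\!\!\int_{\mathbb{R}^d}|u_s|^2\,dx\,ds\Bigr),
\]
where $\eta:=E(W)-E(u_0)>0$ plays the role that $J(0)=-E(u_0)>0$ played in Proposition~\ref{Levine}.

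With this bound I run the concavity computation of Proposition~\ref{Levine} essentially verbatim on $I(t):=\int_0^t\|u(s)\|_{L^2}^2\,ds+A$ ($A>0$ to be fixed). Then $I'(t)=\|u(t)\|_{L^2}^2$, $I''(t)=-2K(u(t))$ is bounded below as above, and $(I'(t))^2$ is bounded above by $4(1+\epsilon)\bigl(\int_0^t\|u\|_{L^2}^2\bigr)\bigl(\int_0^t\|u_s\|_{L^2}^2\bigr)+C_\epsilon\|u_0\|_{L^2}^4$ via the same Cauchy-Schwarz/Young estimate used there. The quadratic cross term is absorbed provided the critical gain factor $\tfrac{2\cdot 2^{*}}{4}=\tfrac{d}{d-2}$ strictly exceeds $(1+\alpha)(1+\epsilon)$, which is possible for every $d\geq 3$; the $\|u_0\|_{L^2}^4$ remainder is then absorbed by $2\cdot 2^{*}A\eta$ upon taking $A$ large. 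The outcome is $I\,I''\geq(1+\alpha)(I')^2$, equivalently $(I^{-\alpha})''\leq 0$. Since $u_0\not\equiv 0$ (forced by $\|\nabla u_0\|_{L^2}\geq\|\nabla W\|_{L^2}>0$ together with $u_0\in L^2$), $I^{-\alpha}$ is concave, positive, with strictly negative derivative at $t=0$, so it reaches zero at some finite $\hat t$; thus $\|u\|_{L^2}^2=I'\to\infty$ at $\hat t$, contradicting the first step.

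The main obstacle I anticipate is the narrowness of the critical concavity margin: the factor $\tfrac{d}{d-2}$ exceeds $1$ only by $\tfrac{2}{d-2}$, in contrast to the comfortable subcritical margin $\tfrac{p+1}{2}-1$ exploited in Proposition~\ref{Levine}. The slack has to be supplied by the strictly positive $\eta=E(W)-E(u_0)$ delivered by the variational trapping, and this is exactly the point at which the strict inequality $E(u_0)<E(W)$ (as opposed to $\leq$) becomes essential, since it is what creates room to absorb the $\|u_0\|_{L^2}^4$ remainder from Cauchy-Schwarz.
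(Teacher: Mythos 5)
Your proof is correct and follows essentially the same strategy as the paper's: a Levine-type concavity argument for $I(t)=\int_0^t\|u\|_{L^2}^2\,ds+A$, powered by an above-threshold variational trapping that yields $-K(u(t))\geq 2^{*}\bigl(E(W)-E(u(t))\bigr)$, with $2\cdot 2^{*}=4\bigl(1+\tfrac{2}{d-2}\bigr)$ supplying exactly the concavity margin that negative energy supplies in the subcritical Proposition~\ref{Levine}. The only (harmless) difference is that you obtain the lower bound on $-K$ directly from the identity $E=\tfrac{1}{d}\|\nabla u\|_{L^2}^2+\tfrac{1}{2^{*}}K$ together with the flow-preserved inequality $\|\nabla u(t)\|_{L^2}\geq\|\nabla W\|_{L^2}$, whereas the paper inverts the sharp-Sobolev function $f$ to get the slightly stronger bound $-K\geq g(E)$ with $|g'|>2^{*}$; both deliver the same coefficient in front of $\int_0^t\int|u_t|^2$ and hence the same conclusion.
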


\begin{proof}
We will give a sketch of the proof, which is largely a modification of the proof of the previous proposition.

By the Sobolev inequality~(\ref{Sobolev}), 
\begin{equation} 
   E(u) = \frac{1}{2}\int_{\R^d} |\nabla u|^2 dx - \frac{1}{2*} \int_{\R^d}  |u|^{2*} dx 
   \geq \frac{1}{2}\|\nabla u\|^2_{L^2} - \frac{1}{2*} \frac{\|W\|^{2*}_{L^{2*}}}
   {\|\nabla W\|^{2*}_{L^2}} \|\nabla u\|^{2*}_{L^2} .
\label{decreasing} 
\end{equation}
We define $\ds f(y):=\frac{1}{2}y - \frac{1}{2*} C^{2*} y^{\frac{2*}{2}}$, \;
$C = \frac{\| W \|_{L^{2*}}}{\| \nabla W \|_{L^2}} = \| \nabla W\|_{L^2}^{-{\frac{2}{d}}}$,
so that by energy dissipation and~\eqref{BUset},
\begin{equation}   \label{f(W)}
  f(\|\nabla u\|^2_{L^2}) \leq E(u) \leq E(u_0) < E(W).
\end{equation}
It is straightforward to verify that $f(y)$ is concave for $y \geq 0$ 
and attains its maximum value
$f(\|\nabla W\|^2_{L^2}) = E(W) = \frac{1}{d} \| \nabla W \|_{L^2}^2$ at 
$y = \|\nabla W\|^2_{L^2}$. 
Furthermore, it is strictly increasing on 
$[0, \| \nabla W \|_{L^2}^2]$ and strictly decreasing on 
$[\| \nabla W \|_{L^2}^2, +\infty)$. 
Denote the inverse function of $f$ on $[\| \nabla W \|_{L^2}^2, +\infty)$ as
\[
  e = f^{-1}  : (-\infty, E(W)] \rightarrow [\|\nabla W\|^{2}_{L^2}, +\infty),
\]
strictly decreasing.
By~\eqref{f(W)} and~\eqref{BUset} then,
\[
  \| \nabla u(t) \|_{L^2}^2 \geq e(E(u(t)).
\]
By the definitions of $K = K(u)$ and the energy $E = E(u)$
\[
\begin{split}
  -K(u) &= -\int_{\R^d} |\nabla u|^2 dx +\int_{\R^d}  |u|^{2*} dx =
  \frac{2}{d-2} \int_{\R^d} |\nabla u|^2 dx - 2^* E(u) \\ & \geq  
  \frac{2}{d-2} \left( e(E) -  d E \right) =: g(E).
\end{split}
\]
Note that $g(E(W)) = 0$ and for $E < E(W)$, $g(E) > 0$ and 
$g'(E) = \frac{2}{d-2} e'(E) - 2^* < -2^*$.
Defining $I(t)$ as in~\eqref{Idef}: 
\[
  I''(t) = -2K(u) \geq 2g(E(u)) > 0.
\]
By the Fundamental Theorem of Calculus and the energy dissipation relation,
\begin{equation*} 
  2 g(E(u)) = 2 g(E(u_0)) + 2 \int_0^t |g'(E(u(s)))| 
  \int_{\mathbb{R}^d} |u_t|^2 dx ds .
\end{equation*}
One can now repeat the proof of Proposition \ref{Levine} replacing (\ref{I''}) by 
\begin{equation}  
  I''(t) \geq 4 (1+\delta) J(t) = 4 (1+\delta) \left( 2g(E(u_0)) + \int_0^t 2 |g'(E(u(s)))| \int_{\mathbb{R}^d} |u_t|^2 dx ds \right ) \label{new I''} 
\end{equation}
Since $g(E(u_0)) > 0$, we can proceed exactly as in the proof of the
previous Proposition to conclude that if $T_{max} = \infty$,
then we must have $\ds \limsup_{t \to \hat{t}-} \| u(t) \|_{L^2} = \infty$
for some $\hat{t} < \infty$, which by~\eqref{I'(t)}
implies $\ds \limsup_{t \to \hat{t}-} \| u(t) \|_{L^{2*}} = \infty$, and so by
Sobolev, $\ds \limsup_{t \to \hat{t}-} \| \nabla u(t) \|_{L^2} = \infty$,
contradicting $T_{max} < \infty$.
\end{proof}


\section*{Acknowledgements}
The first author's research is supported through an NSERC Discovery Grant.


\end{document}